\newtheorem{theorem}{\textbf{Theorem}}[section]
\newtheorem{lemma}{\textbf{Lemma}}[section]
\newtheorem{proposition}{\textbf{Proposition}}[section]
\newtheorem{corollary}{\textbf{Corollary}}[section]
\newtheorem{remark}{\textbf{Remark}}[section]
\newtheorem{definition}{\textbf{Definition}}[section]
\def\be{\begin{equation}}
\def\ee{\end{equation}}
\def\bea{\begin{eqnarray}}
\def\eea{\end{eqnarray}}
\def\bt{\begin{theorem}}
\def\et{\end{theorem}}
\def\bl{\begin{lemma}}
\def\el{\end{lemma}}
\def\br{\begin{remark}}
\def\er{\end{remark}}
\def\bp{\begin{proposition}}
\def\ep{\end{proposition}}
\def\bc{\begin{corollary}}
\def\ec{\end{corollary}}
\def\bd{\begin{definition}}
\def\ed{\end{definition}}
\def\R{\mathbb{R}}
 \def\non{\nonumber }
\def \no#1#2#3 {{\bf #1} (#3), #2.}
\def \eds#1#2#3 {#1, #2, #3.}
\begin{document}

\title{Global weak solution for a coupled compressible Navier-Stokes
and Q-tensor system}

\author{
{\sc Dehua Wang} \footnote{Department of Mathematics, University of
Pittsburgh, Pittsburgh, PA, 15260, Email:
\textit{dwang@math.pitt.edu}.},
 {\sc Xiang Xu} \footnote{Department
of Mathematical Sciences, Carnegie Mellon University, Pittsburgh,
PA, 15213, Email: \textit{xuxiang@andrew.cmu.edu}.} \ and {\sc Cheng
Yu} \footnote{Department of Mathematics, University of Pittsburgh,
Pittsburgh, PA, 15260, Email: \textit{chy39@pitt.edu.}} }
\date{}
\maketitle

\begin{abstract}

In this paper, we study a coupled compressible
Navier-Stokes/$Q$-tensor system modeling the nematic liquid crystal
flow in a three-dimensional bounded spatial domain. The existence and long time dynamics of globally defined
weak solutions for the coupled system are established, using weak
convergence methods, compactness and interpolation arguments.
The symmetry and traceless properties of the $Q$-tensor play key roles in this
process.

\bigskip

{\bf Keywords.} Navier-Stokes, Q-tensor, liquid crystals, global weak solution, symmetric,
traceless.
\bigskip

{\bf Subject Classifications.} 35A05, 76A10, 76D03.

\end{abstract}

\maketitle

\section{Introduction} In this paper we consider the following hydrodynamic
system modeling the compressible nematic liquid crystal flow in a
bounded domain, which is composed of a coupled Navier-Stokes and
Q-tensor equations (see \cite{DOY08, TDY03}):
\bea \rho_t+\nabla\cdot(\rho{u})&=&0, \label{density equ} \\
(\rho{u})_t+\nabla\cdot(\rho{u}\otimes{u})+\nabla(P(\rho))&=&\mathcal{L}u-\nabla\cdot\big(L\nabla{Q}\odot\nabla{Q}
-\mathcal{F}(Q)I_3\big) \non\\
&&+L\nabla\cdot(Q\mathcal{H}(Q)-\mathcal{H}(Q)Q),
 \label{navier-stokes} \\
Q_t+u\cdot\nabla{Q}-\Omega{Q}+Q\Omega&=&\Gamma\mathcal{H}(Q).
 \label{order parameter} \eea
The system \eqref{density equ}-\eqref{order parameter} is subject to
the following initial conditions: \be (\rho, \rho{u},
Q)|_{t=0}=(\rho_0(x), q_0(x), Q_0(x)),  \ \ x \in U, \label{IC} \ee
with
\be Q_0 \in H^1(U), \ \ \ Q_0 \in S_0^{(3)} \ \ \mbox{ a.e. in } U,
\label{assumption 2 on IC} \ee and the following boundary conditions
\be u(x, t)=0, \ \ Q(x, t)=Q_0(x), \ \mbox{for } (x, t) \in \partial
U \times (0, \infty). \label{BC} \ee
 The following compatibility
condition is also imposed \be  \rho_0 \in L^{\gamma}(U), \ \rho_0
\geq 0; \, q_0 \in L^1(U), \ q_0=0 \ \mbox{if } \rho_0=0; \
\frac{|q_0|^2}{\rho_0} \in L^1(U). \label{compatibility condition}
\ee
Here $U \subset \mathbb{R}^3$ is a smooth bounded domain, $\rho: U
\times [0, +\infty) \rightarrow \mathbb{R}^1$ is the density
function of the fluid, $u: U \times [0, +\infty) \rightarrow
\mathbb{R}^3$ represents the velocity field of the fluid,
$P=\rho^\gamma$ stands for the pressure function with the adiabatic
constant $\gamma > 1$, and $Q: U \times (0, +\infty) \rightarrow
S_0^{(3)}$ is the order parameter, with $S_0^{(3)} \subset
\mathbb{M}^{3 \times 3}$ representing the space of $Q$-tensors in
dimension $3$, i.e.
\[ S_0^{(3)}=\{Q \in \mathbb{M}^{3 \times 3}; Q_{ij}=Q_{ji}, \, tr(Q)=0, \, i,j=1,\cdots, 3    \}.   \]
Throughout our paper, $\mbox{div}$ stands for the divergence
operator in $\mathbb{R}^3$ and $\mathcal{L}$ stands for the Lam\'{e}
operator:
\[ \mathcal{L}u=\nu\Delta{u}+(\nu+\lambda)\nabla\mbox{div}{u},  \]
where $\nu$ and $\lambda$ are shear viscosity and bulk viscosity
coefficients of the fluid, respectively, which satisfy the following
physical assumptions: \be \nu > 0, \ \ 2\nu+3\lambda \geq 0.
\label{relation between nu and lambda} \ee
 The $(i, j)$-th entry of the tensor
$\nabla{Q}\odot\nabla{Q}$ is
$\displaystyle\sum_{k,l=1}^3\nabla_iQ_{kl}\nabla_jQ_{kl}$, and $I_3
\subset \mathbb{M}^{3 \times 3}$ stands for the $3 \times 3$
identity matrix. Furthermore, $\mathcal{F}(Q)$ represents the free
energy density of the director field \be \mathcal{F}(Q)=
\frac{L}{2}|\nabla{Q}|^2+\frac{a}{2}tr(Q^2)-\frac{b}{3}tr(Q^3)+\frac{c}{4}tr^2(Q^2),
 \label{def of free energy} \ee and we denote
 \be \mathcal{H}(Q)=L\Delta Q-a
Q+b\Big[Q^2-\frac{I_3}{3}tr(Q^2)\Big]-cQ tr(Q^2).  \ee Here
$\Omega=\frac{\nabla{u}-\nabla^{T}u}{2}$ is the skew-symmetric part
of the rate of strain tensor. $L>0$, $\Gamma>0$, $a\in\mathbb{R}$,  $b>0$ and $c>0$ are
material-dependent elastic constants (c.f. \cite{MZ10}).

The celebrated hydrodynamic theory for nematic liquid crystals,
namely the Ericksen-Leslie theory, was developed between 1958 and
1968. Afterwards Lin \cite{Lin2} and Lin-Liu \cite{LL95,LL2} added a
penalization term to the Oseen-Frank energy functional to relax the
nonlinear constraint of unit vector length, and made a serious of
important analytic work, such as existence of global weak solutions,
partial regularity, etc. The corresponding compressible liquid
crystal flow was studied in Wang-Yu \cite {WY}, and also see \cite{LQ}. On
the other hand, quite recently, for a simplified Ericksen-Leslie
system with the nonlinear constaint of unit vector length,
Lin-Lin-Wang \cite{LLW} proved the existence of global weak
solutions that are smooth away from at most finitely many singular
times in any bounded smooth domain of $\R^2$, and results on
uniqueness of weak solutions were given in \cite{LW10, XZ11}.
Moreover, for the corresponding compressible flow in one-dimensional case, the
existence of global regular and weak solutions to the compressible
 flow of liquid crystals was obtained in \cite {DLWW, DWW}. 
 The strong solutions in three-dimensional case was also discussed in \cite{HW2,HuWu1,HuWu2,HWW}.

Besides the Ericksen-Leslie theory, there are alternative theories
that attempt to describe the nematic liquid crystal, among which the
most comprehensive description is the $Q$-tensor theory proposed by
P. G. De Gennes  in \cite{PG}. Roughly speaking, a $Q$-tensor is a symmetric and
traceless matrix which can be interpreted from the physical point of
view as a suitably normalized second-order moment of the probability
distribution function describing the orientation of rod-like liquid
crystal molecules (see \cite{BM, BZ} for details). The static theory
of $Q$ tensor has been extensively studied in \cite{BM, BZ, M10,
MZ10}.  On the other hand, the mathematical analysis of the
corresponding hydrodynamic system was studied in Paicu-Zarnescu \cite{PZ11, PZ12}. 
More precisely, they establish the existence of global
weak solutions to the coupled system of incompressible Navier-Stokes
equations and $Q$-tensors in both two and three dimensional cases,
as well as the existence of global regular solutions in two-dimensions.

 In this paper, we are interested in the compressible
version of the model studied in \cite{PZ12}. In the current case,
the fluid flow is governed by the compressible Navier-Stokes
equations, and the motion of the order-parameter $Q$ is described by
a parabolic type equation. It combines a usual equation describing
the flow of compressible fluid with extra nonlinear coupling terms.
These extra terms are induced elastic stresses from the elastic
energy through the transport, which is represented by the equation
of motion for the tensor order parameter $Q$:
\begin{equation*}
(\partial_t+u\cdot\nabla)Q-S(\nabla u,Q)=\Gamma \mathcal{H},
\label{parabolic equation}
\end{equation*}
where $\Gamma>0$ is a collective rotational diffusion constant. The
first term on the left hand side of the above equation is the
material derivative of $Q$, which is generalized by a second term
\begin{equation*}
S(\nabla
u,Q)=(\xi{A}+\Omega)\big(Q+\frac{I_3}{3}\big)+\big(Q+\frac{I_3}{3}\big)(\xi{A}-\Omega)-2\xi\big(Q+\frac{I_3}{3}\big)tr(Q\nabla{u}).
\end{equation*}
Here $A=\frac{\nabla{u}+\nabla^T{u}}{2}$ is the rate of strain
tensor. The term $S(\nabla u,Q)$ appears in the equation because the
order parameter distribution can be both rotated and stretched by
the flow gradients. $\xi$ is a constant which depends on the
molecular details of a given liquid crystal, which also measures the
ratio between the tumbling and aligning effect that a shear flow
would exert over the liquid crystal directors. The right hand side
of the equation \eqref{parabolic equation} describes the internal
relaxation of the order parameter towards the minimum of the free
energy. Furthermore, it is noted that in the uniaxial nematic phase,
when the magnitude of the order parameter $Q$ remains constant, the
coupled hydrodynamic system is reduced to the Ericksen-Leslie system
with the validity of Parodi's relation (see \cite{DOY08}). For the
sake of simplicity in mathematical analysis, we take $\xi=0$ in our
system. And we want to point out that the case for $\xi\neq 0$ is
mathematically much more challenging. There are no existing results
for the coupled system by compressible Navier-Stokes and
$Q$-tensors, and the goal of this paper is to establish the
existence of global weak solutions for the compressible coupled
system. We note that due to higher nonlinearities in the coupled
system \eqref{density equ}-\eqref{order parameter}, compared to
earlier works in \cite{WY, LQ}, it is more difficult to study the
current system mathematically.

Note that when $Q$ is absent in \eqref{density equ}-\eqref{order
parameter}, the system is reduced to the compressible Navier-Stokes
equations. For the multidimensional compressible Navier-Stokes
equations, early work by Matsumura and Nishida \cite{MN79,MN80,MN83}
established the global existence with the small initial data, and
later by Hoff \cite{H95JDE,H95,H97} for discontinuous initial data.
To remove the difficulties of large oscillations,
 Lions in \cite{L98} introduced the concept of  renormalized solutions  and proved the global existence of finite energy weak solutions
 for $\gamma>9/5$, where the vacuum is allowed initially, and then Feireisl, {\it et al},  in \cite{F04, FNP01, FP99} extended the existence
 results to $\gamma>3/2$.
Since the compressible Navier-Stokes equations is a sub-system to
\eqref{density equ}-\eqref{order parameter}, one cannot expect
better result than those in \cite{F04, FNP01, FP99}. To this end, in
this paper we shall study the initial-boundary value problem for
large initial data in certain functional spaces with $\gamma>3/2$.
To achieve our goal, we will use a three-level approximation scheme
similar to that in \cite{F04, FNP01}, which consists of
Faedo-Galerkin approximation, artificial viscosity, and artificial
pressure (see also \cite{DT06, DT07, LQ, WY}). Then, following the
idea in \cite{FNP01}, we show that the uniform estimate of the
density $\rho^{\gamma+\alpha} $ in $L^{1}$ for some $\alpha>0$
ensures the vanishing of artificial pressure and the strong
compactness of the density.  We will establish the weak continuity
of the effective viscous flux for our systems similar to that for
compressible Navier-Stokes equations as in Lions and Feireisl in
\cite{F04, FNP01, L98} to remove the difficulty of possible large
oscillation of the density. To obtain the related lemma on effective
viscous flux, we have to make delicate analysis to deal with the
coupling and interaction between $Q-$tensor and the fluid velocity,
especially certain higher order terms arising from equation
\eqref{navier-stokes}. It is noted that we have to exploit the
structure of the system \eqref{density equ}-\eqref{order parameter},
and make use of certain special properties of $Q$-tensor, namely
symmetry and trace-free, to obtain the necessary a priori bounds for
$Q$ and the weak continuity for the effective viscous flux.

The remaining part of this paper is organized as follows. In Section
2, after the introduction of some preliminaries, we state the main
existence result of this paper, namely Theorem \ref{theorem on main
result}. In Sections 3-5, we study the three-level approximations,
namely Faedo-Galerkin, vanishing viscosity, and artificial pressure,
respectively. Finally, in section 6, we discuss briefly the long
time dynamics of the global weak solution.

\section{Preliminaries}\setcounter{equation}{0}
Throughout this paper, we denote by $\langle\cdot ,  \cdot\rangle$
the scalar product between two vectors, and 
$$A:B=tr(A^{T}B)=tr(AB^{T})$$ represents the inner product between two
$3\times 3$ matrices $A$ and $B$, $\|\cdot\|_{L^2(U)}$ will be
shorthanded by $\|\cdot\|$ if necessary. We use the Frobenius norm
of a matrix $|Q| = \sqrt{tr(Q^2)}=\sqrt{Q_{ij}Q_{ij}}$ and Sobolev
spaces for $Q$-tensors are defined in terms of this norm. For
instance, $$L^2(U, S_0^{3}) = \{Q: U \rightarrow S_0^3,
\int_{U}|Q(x)|^2dx < \infty \}.$$  Meanwhile, we denote $\mathcal{D}$
as $C_0^\infty$, and $\mathcal{D}'$ in the sense of distributions.
We denote by $C$ and $C_i, i=0, 1, \cdots$ genetic constants which
may depend only on $U$, the coefficients of the system
\eqref{density equ}-\eqref{order parameter}, and the initial data
$(\rho_0, u_0, Q_0)$. Special dependence will be pointed out
explicitly in the text if necessary. Here and after, the Einstein
summation convention will be used. We also denote the total energy
by \be
\mathcal{E}(t)=\int_{U}\Big(\frac12\rho|u|^2(t)+\frac{\rho^{\gamma}(t)}{\gamma-1}\Big)
dx+\mathcal{G}(Q(t)), \ee where \be \mathcal{G}(Q(t))=\int_{U}
\left(\frac{L}{2}|\nabla{Q}|^2+\frac{a}{2}tr(Q^2)-\frac{b}{3}tr(Q^3)+\frac{c}{4}tr^2(Q^2)\right)dx.\ee
An important property of the coupling system \eqref{density
equ}--\eqref{BC} is that it has a basic energy law, which indicates
the dissipative nature of the system. It states that the total sum
of the kinetic and internal energy are dissipated due to viscosity
and internal elastic relaxation.
\begin{proposition} \label{proposition on basic energy law}
If $(\rho, u, Q)$ is a smooth solution of the problem \eqref{density
equ}-\eqref{BC}, then for any $ t > 0$, the following energy
dissipative law holds  \bea
\frac{d}{dt}\mathcal{E}(t)+\int_{U}\left(\nu|\nabla{u}|^2+(\nu+\lambda)|\mbox{div}\,{u}|^2\right)\,dx+\Gamma\int_{U}tr^2(\mathcal{H})dx=0.
\label{basic energy law} \eea
\end{proposition}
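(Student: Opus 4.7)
The plan is to derive \eqref{basic energy law} by testing each equation of \eqref{density equ}--\eqref{order parameter} against a natural multiplier, adding the resulting identities, and exploiting a cascade of cancellations between the elastic stress in the momentum equation and the transport/rotation terms in the $Q$-equation.

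First I would multiply \eqref{navier-stokes} by $u$ and integrate over $U$. Using \eqref{density equ}, the convective part collapses to $\tfrac{d}{dt}\int_U\tfrac12\rho|u|^2\,dx$, while an integration by parts combined with the continuity equation rewrites the pressure work $-\int u\cdot\nabla P\,dx$ as $\tfrac{d}{dt}\int_U \rho^\gamma/(\gamma-1)\,dx$; the Lam\'e term supplies the viscous dissipation $-\nu\int|\nabla u|^2-(\nu+\lambda)\int|\mbox{div}\,u|^2$. The elastic stresses are handled by integration by parts with the no-slip boundary condition, producing contributions proportional to $\int \partial_j u_i\,\partial_i Q_{kl}\,\partial_j Q_{kl}\,dx$, $\int(\mbox{div}\,u)\mathcal{F}(Q)\,dx$ and $\int(Q\mathcal{H}-\mathcal{H}Q):\nabla u\,dx$.

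Next I would test \eqref{order parameter} with $\mathcal{H}(Q)$, which is admissible because $tr(\mathcal{H})=0$ and $\mathcal{H}=\mathcal{H}^{T}$ both follow directly from $Q\in S_0^{(3)}$. The relaxation term yields $\Gamma\int_U tr(\mathcal{H}^2)\,dx$. The material-derivative piece uses the variational identity $\int \mathcal{H}:Q_t\,dx=-\tfrac{d}{dt}\mathcal{G}(Q)$, verified term by term using $Q_t|_{\partial U}=0$ to integrate the $L\Delta Q$ contribution by parts. A short computation using the same Laplacian integration by parts together with $tr\,Q=0$ gives
\[
\int_U\mathcal{H}:(u\cdot\nabla Q)\,dx = -L\int_U \partial_j u_i\,\partial_i Q_{kl}\,\partial_j Q_{kl}\,dx+\int_U (\mbox{div}\,u)\,\mathcal{F}(Q)\,dx,
\]
which exactly cancels the Ericksen-type elastic stress from the momentum balance.

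For the rotational contribution, the commutator $M:=Q\mathcal{H}-\mathcal{H}Q$ is antisymmetric because it is the commutator of two symmetric matrices; decomposing $\partial_j u_i=A_{ij}+\Omega_{ij}$ one therefore obtains $M:\nabla u=M:\Omega$, and the cyclic property of the trace yields $\int \mathcal{H}:(Q\Omega-\Omega Q)\,dx=\int M:\Omega\,dx$, pairing the two rotational contributions. Summing the two tested equations leaves exactly the viscous and $\mathcal{H}$-dissipation terms, giving \eqref{basic energy law}. The main obstacle is locating these algebraic cancellations and recognizing that the symmetry and trace-free properties of $Q$ (and hence of $\mathcal{H}$) are precisely what make them possible: without symmetry, $M$ would fail to be antisymmetric; without tracelessness, the $\tfrac{I_3}{3}tr(Q^2)$ subtraction in $\mathcal{H}$ would not integrate cleanly against the convective term.
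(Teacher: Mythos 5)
Your proposal is correct and follows essentially the same route as the paper: test the momentum equation with $u$, test the $Q$-equation with $\mathcal{H}$ (the paper uses $-\mathcal{H}$, a cosmetic sign difference), convert $\int_U\rho^\gamma\mbox{div}\,u\,dx$ into $-\frac{1}{\gamma-1}\frac{d}{dt}\int_U\rho^\gamma dx$ via the continuity equation, and cancel the elastic-stress and rotational terms pairwise using the symmetry and tracelessness of $Q$. Your identification of the commutator's antisymmetry and the identity $\int_U\mathcal{H}:(u\cdot\nabla Q)\,dx=-L\int_U\partial_j u_i\,\partial_i Q_{kl}\,\partial_j Q_{kl}\,dx+\int_U(\mbox{div}\,u)\mathcal{F}(Q)\,dx$ are exactly the cancellations the paper exhibits, just written out more explicitly.
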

\begin{proof}
Multiplying equation \eqref{navier-stokes} with $u$ then integrating
over $U$, using the density equation \eqref{density equ} and
boundary condition \eqref{BC} for $u$, we get after integration by
parts that \bea
\frac12\frac{d}{dt}\int_{U}\rho|u|^2dx&=&-(\nu+\lambda)\int_{U}|\mbox{div}u|^2dx
-\nu\int_{U}|\nabla{u}|^2dx+\int_{U}\rho^{\gamma}\mbox{div}{u}\,dx
\non\\
&&-L\int_{U} (u\cdot\nabla{Q}) : \Delta{Q} dx+\int_{U}\Big\langle u,
\nabla\big[\frac{a}{2}tr(Q^2)-\frac{b}{3}tr(Q^3)+\frac{c}{4}tr^2(Q^2)
\big] \Big\rangle dx \non\\
&&-L\int_{U} \nabla{u}: Q\Delta{Q} \, dx +L\int_{U} \nabla{u}:
\Delta{Q}Q \, dx.
 \label{part1 of basic energy law} \eea Next, we multiply equation
\eqref{order parameter} with -$\mathcal{H}$, then take the trace and
integrate over $U$. Since $\Omega+\Omega^{T}=0$, $Q^{T}=Q$,
$tr(Q)=0$, after integration by parts we have \bea
&&\frac{d}{dt}\mathcal{G}(Q(t))\non\\
&=&-\Gamma\int_{U}tr^2(\mathcal{H})dx+L\int_{U} (u\cdot\nabla{Q}):
\Delta{Q}\, dx-\int_{U}\Big\langle
u,\nabla\big[\frac{a}{2}tr(Q^2)-\frac{b}{3}tr(Q^3)+\frac{c}{4}tr^2(Q^2)
\big] \Big\rangle dx \non\\
&&-\frac{L}{2}\int_{U}(\nabla{u}Q+Q\nabla^{T}u):\Delta{Q}dx
+\frac{L}{2}\int_{U}(\nabla^{T}{u}Q+Q\nabla{u}):\Delta{Q}dx
\non\\
&=&-\Gamma\int_{U}tr^2(\mathcal{H})dx+L\int_{U} (u\cdot\nabla{Q}):
\Delta{Q}\, dx-\int_{U}\Big\langle
u,\nabla\big[\frac{a}{2}tr(Q^2)-\frac{b}{3}tr(Q^3)+\frac{c}{4}tr^2(Q^2)
\big] \Big\rangle dx \non\\
&&-L\int_{U}\nabla{u}: \Delta{Q}Q dx+L\int_{U}\nabla{u}: Q\Delta{Q}
dx.
 \label{part2 of basic energy law} \eea
Adding \eqref{part1 of basic energy law} and \eqref{part2 of basic
energy law} together, it yields \bea
&&\frac12\frac{d}{dt}\int_{U}\rho|u|^2dx+\frac{d}{dt}\mathcal{G}(Q(t))
\non\\
&=&-(\nu+\lambda)\int_{U}|\mbox{div}\,u|^2dx
-\nu\int_{U}|\nabla{u}|^2dx-\Gamma\int_{U}tr^2(\mathcal{H})dx
+\int_{U}\rho^{\gamma}\mbox{div}{u}\,dx.
 \label{part3 of basic energy law} \eea
Using the density equation again, it follows after integration by
parts several times that \bea
\int_{U}\rho^{\gamma}\mbox{div}{u}\,dx&=&
-\int_{U}\langle\gamma\rho^{\gamma-2}\nabla\rho, \rho{u}
\rangle\,dx=-\frac{\gamma}{\gamma-1}\int_{U}\langle
\nabla\rho^{\gamma-1}, \rho{u} \rangle\,dx
\non\\
&=&\frac{\gamma}{\gamma-1}\int_{U}\rho^{\gamma-1}
\mbox{div}(\rho{u}) \,dx
=-\frac{1}{\gamma-1}\frac{d}{dt}\int_{U}\rho^{\gamma}dx.
\label{part4 of basic energy law} \eea Consequently, we finish the
proof after combining \eqref{part3 of basic energy law} and
\eqref{part4 of basic energy law}.

\end{proof}


It is worth pointing that the assumption $c>0$ is necessary from a
modeling point of view (see \cite{M10, MZ10}) so that the total
energy $\mathcal{E}$ is bounded from below.
\bl\label{lemma on lower bound of total energy} For any smooth
solution $(\rho, u, Q)$ to the problem \eqref{density
equ}-\eqref{BC}, it holds \be \mathcal{E}(t) \geq
\int_{U}\left(\frac{\rho|u|^2}{2}+\frac{\rho^\gamma}{\gamma-1}\right)
dx+\frac{L}{2}\|\nabla{Q}(t)\|^2+\frac{c}{8}\int_{U}\Big[tr(Q^2)+\frac{2a}{c}-\frac{2b^2}{c^2}
\Big]^2dx-\frac{1}{2c^3}(b^2-ca)^2|U|, \ee where $|U|$ represents
the Lebesgue measure of the domain $U$. \el
\begin{proof}
Since $Q \in S_0^3$, $Q$ has three real eigenvalues at each point :
$\lambda_1$, $\lambda_2$ and $-(\lambda_1+\lambda_2)$. Hence
$tr(Q^2)=2(\lambda_1^2+\lambda_2^2+\lambda_1\lambda_2)$,
$tr(Q^3)=-3\lambda_1\lambda_2(\lambda_1+\lambda_2)$. Notice that
\bea tr(Q^3)&=&-3\lambda_1\lambda_2(\lambda_1+\lambda_2) \leq
3(\lambda_1^2+\lambda_2^2+\lambda_1\lambda_2)\Big[\frac{\varepsilon(\lambda_1+\lambda_2)^2}{4}+\frac{1}{\varepsilon}
\Big] \non\\
&\leq&
3(\lambda_1^2+\lambda_2^2+\lambda_1\lambda_2)\Big[\frac{\varepsilon(\lambda_1^2+\lambda_2^2+\lambda_1\lambda_2)}{2}+\frac{1}{\varepsilon}
\Big]  \leq
\frac{3\varepsilon}{8}tr^2(Q^2)+\frac{3}{2\varepsilon}tr(Q^2).
\label{inequality on trace of Q} \eea Taking
$\varepsilon=\frac{c}{b}$ in \eqref{inequality on trace of Q}, then
we infer that \bea \mathcal{G}(Q)&\geq&
\frac{L}{2}\|\nabla{Q}\|^2+\int_{U}\frac{c}{8}tr^2(Q^2)-\Big(\frac{b^2}{2c}-\frac{a}{2}\Big)tr(Q^2)\
dx \non\\
&=&\frac{L}{2}\|\nabla{Q}\|^2+\frac{c}{8}\int_{U}\Big[tr(Q^2)+\frac{2a}{c}-\frac{2b^2}{c^2}
\Big]^2dx-\frac{1}{2c^3}(b^2-ca)^2|U|. \eea

\end{proof}

Consequently, using Proposition \ref{proposition on basic energy
law}, Lemma \ref{lemma on lower bound of total energy}, it is
straightforward to deduce the following a priori bounds for $Q$.

\begin{corollary}\label{remarkofQ}
For any smooth solution $(\rho, u, Q)$ to the problem \eqref{density
equ}-\eqref{BC}, it holds \be Q \in L^{10}(0, T; U)\cap L^\infty([0,
T]; H^1(U))\cap L^2([0, T]; H^2(U)), \ \ \nabla{Q} \in
L^{\frac{10}{3}}(0, T; U). \ee

\end{corollary}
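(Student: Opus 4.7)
The plan is to chain together Proposition~\ref{proposition on basic energy law} and Lemma~\ref{lemma on lower bound of total energy} to extract the basic a priori bounds on $Q$, upgrade these to $L^2_t H^2_x$ regularity by inverting the defining relation for $\mathcal{H}(Q)$, and finally conclude by a three-dimensional Gagliardo--Nirenberg interpolation. First, integrating the identity \eqref{basic energy law} in time and combining with the coercive lower bound of Lemma~\ref{lemma on lower bound of total energy}, I would read off the uniform estimates
\[ \sup_{t\in[0,T]}\|\nabla Q(t)\|_{L^2(U)}^2 + \sup_{t\in[0,T]}\int_U (tr(Q^2))^2\,dx + \int_0^T\!\!\int_U tr^2(\mathcal{H}(Q))\,dx\,dt \le C. \]
Together with the Dirichlet trace $Q|_{\partial U}=Q_0\in H^1(U)$, the first term yields $Q\in L^\infty(0,T;H^1(U))$ and hence $Q\in L^\infty(0,T;L^6(U))$ via the three-dimensional Sobolev embedding; the second gives $Q\in L^\infty(0,T;L^4(U))$; the third puts $\mathcal{H}(Q)\in L^2([0,T]\times U)$.

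Next I would rearrange the definition of $\mathcal{H}$ as
\[ L\Delta Q = \mathcal{H}(Q) + aQ - b\Bigl[Q^2 - \tfrac{I_3}{3} tr(Q^2)\Bigr] + cQ\,tr(Q^2), \]
and check that every term on the right lies in $L^2([0,T]\times U)$. The $\mathcal{H}(Q)$ contribution is immediate from the dissipation bound; the linear and quadratic pieces follow from $Q\in L^\infty(0,T;L^2(U))$ and $Q\in L^\infty(0,T;L^4(U))$; and the critical cubic piece satisfies $\|cQ\,tr(Q^2)\|_{L^2(U)}=c\|Q\|_{L^6(U)}^3$, which is uniformly bounded in $t$ by the previous step. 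Standard elliptic regularity for the non-homogeneous Dirichlet problem on the smooth bounded domain $U$ then promotes this to $Q\in L^2(0,T;H^2(U))$.

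Having now secured $Q\in L^\infty(0,T;H^1(U))\cap L^2(0,T;H^2(U))$, I would apply the Gagliardo--Nirenberg inequalities on $U\subset\mathbb{R}^3$,
\[ \|Q\|_{L^{10}(U)} \le C\|Q\|_{H^2(U)}^{1/5}\|Q\|_{L^6(U)}^{4/5}, \qquad \|\nabla Q\|_{L^{10/3}(U)} \le C\|Q\|_{H^2(U)}^{3/5}\|Q\|_{L^6(U)}^{2/5}, \]
raise them to the tenth and $10/3$-rd powers respectively, and integrate in time. The resulting integrands are both proportional to $\|Q\|_{L^6}^{\beta}\|Q\|_{H^2}^{2}$ with $\beta\in\{8,\,4/3\}$, each of which is finite by the previous steps. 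This yields $Q\in L^{10}((0,T)\times U)$ and $\nabla Q\in L^{10/3}((0,T)\times U)$, as claimed.

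The main obstacle is the cubic term $cQ\,tr(Q^2)$ in $\mathcal{H}(Q)$: in three dimensions the embedding $H^1\hookrightarrow L^6$ is sharp, so $|Q|^3$ only just fits into $L^\infty_t L^2_x$ with no room to spare. This is precisely why the assumption $c>0$ in Lemma~\ref{lemma on lower bound of total energy} is indispensable: the quartic term $\tfrac{c}{4}tr^2(Q^2)$ in the free energy forces $tr(Q^2)\in L^\infty_t L^2_x$, and this closes the loop that lets one read off $\Delta Q\in L^2_{t,x}$ and then interpolate up to the sharp exponents $10$ and $10/3$.
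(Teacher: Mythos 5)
Your proposal is correct and follows essentially the same route as the paper: energy law plus the coercive lower bound give $\nabla Q\in L^\infty_tL^2_x$, the quartic term gives control of $\|Q\|_{L^\infty_tL^2_x}$, the dissipation term combined with the lower-order pieces of $\mathcal{H}(Q)$ (with the cubic term handled via $H^1\hookrightarrow L^6$) gives $\Delta Q\in L^2_{t,x}$, and the same Gagliardo--Nirenberg exponents ($\theta=1/5$ for $L^{10}$ and $\theta=3/5$ for $L^{10/3}$) close the argument. The only cosmetic difference is that you pass through elliptic regularity for the Dirichlet problem to state $Q\in L^2_tH^2_x$, whereas the paper works directly with $\|\Delta Q\|_{L^2}$ in the interpolation inequalities.
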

\begin{proof}
First, using Proposition \ref{proposition on basic energy law} and
Lemma \ref{lemma on lower bound of total energy}, we have
\begin{eqnarray*}
&&\frac{L}{2}\|\nabla{Q}(t)\|^2+\frac{c}{8}\int_{U}\Big[tr(Q^2)+\frac{2a}{c}-\frac{2b^2}{c^2}
\Big]^2dx \\
 &\leq& \frac{1}{2c^3}(b^2-ca)^2|U|+\mathcal{E}(t) \leq \frac{1}{2c^3}(b^2-ca)^2|U|+\mathcal{E}(0), \end{eqnarray*}
hence $\nabla{Q} \in L^\infty(0, T; L^2(U))$. Meanwhile, using
Holder inequality, it is easy to get from the above inequality that
\begin{eqnarray*} \|Q(t)\|_{L^2(U)}^4 &\leq&
|U|\int_{U}tr^2(Q^2)\,dx \leq
2|U|\int_{U}\Big[tr(Q^2)+\frac{2a}{c}-\frac{2b^2}{c^2}
\Big]^2+\Big(\frac{2a}{c}-\frac{2b^2}{c^2} \Big)^2\,dx \\
&\leq& \frac{16|U|}{c}\Big[
\frac{1}{2c^3}(b^2-ca)^2|U|+\mathcal{E}(0)\Big]+\frac{8|U|^2}{c^4}(ac-b^2)^2,
\end{eqnarray*}
which indicates $Q \in L^\infty(0, T; L^2(U))$. Next, we observe
that
\begin{eqnarray*}
&&\frac{\Gamma}{2}\int_0^T\int_{U}L^2|\Delta{Q}(x, t)|^2\,dxdt \\
&\leq&
\Gamma\int_0^T\int_{U}tr^2(\mathcal{H})\,dxdt+\Gamma\int_0^T\int_{U}\Big|a
Q-b\Big[Q^2-\frac{I_3}{3}tr(Q^2)\Big]+cQ tr(Q^2) \Big|^2\,dxdt  \\
 &\leq& \mathcal{E}(0)-\mathcal{E}(t)+C\Gamma\int_0^T\|Q\|_{H^1(U)}^2\,dt \\
 &\leq&
\frac{1}{2c^3}(b^2-ca)^2|U|+\mathcal{E}(0)+CT.
\end{eqnarray*}
Here $C>0$ depends on $a, b, c, \Gamma, U$ and $\mathcal{E}(0)$.
Consequently, we know $\Delta{Q}\in L^2(0, T; L^2(U))$. Finally, we
infer from Gagliardo-Nirenberg inequality that \begin{eqnarray*}
&&\|Q\|_{L^{10}(U)} \leq
C\|Q\|_{L^6(U)}^{\frac45}\|\Delta{Q}\|_{L^2(U)}^{\frac15}+C\|Q\|_{L^6(U)}
\leq
C\|Q\|_{H^1(U)}^{\frac45}\|\Delta{Q}\|_{L^2(U)}^{\frac15}+C\|Q\|_{H^1(U)},
\\
&&\|\nabla{Q}\|_{L^{\frac{10}{3}}(U)}\leq
C\|\nabla{Q}\|_{L^2(U)}^{\frac25}\|\Delta{Q}\|_{L^2(U)}^{\frac35}+C\|\nabla{Q}\|_{L^2(U)},
\end{eqnarray*}
thus the proof is complete by noting that $Q \in L^\infty(0, T;
H^1(U))$ and $\Delta{Q}\in L^2(0, T; L^2(U))$.
\end{proof}

Next, we introduce the definition of finite energy weak solutions.
\begin{definition} \label{def of finite energy weak solution}
For any $T>0$, $(\rho, u, Q)$ is called a finite energy weak
solution to the problem \eqref{density equ}-\eqref{BC}, if the
following conditions are satisfied.
\begin{itemize}
\item $\rho \geq 0$, $\rho \in L^{\infty}([0, T];
L^{\gamma}(U))$, \;\;$u \in L^2([0, T]; H_0^1(U))$, \\$Q \in
L^\infty([0, T]; H^1(U))$ $\cap$ $L^2([0, T]; H^2(U))$
\\and $Q \in S_0^3$ a.e. in $ U\times[0, T]$.


\item Equations \eqref{density equ}-\eqref{order parameter} are
valid in $\mathcal{D}'((0, T), U)$. Moreover, \eqref{density equ} is
valid in $\mathcal{D}'((0, T), \mathbb{R}^3)$ if $\rho, u$ are
extended to be zero on $\mathbb{R}^3\setminus U$;

\item The energy $\mathcal{E}$ is locally integrable on $(0, T)$ and the energy inequality
$$\frac{d}{dt}\mathcal{E}(t)+\int_{U}\left(\nu|\nabla{u}|^2+(\nu+\lambda)|\mbox{div}\,{u}|^2+\Gamma tr^2(\mathcal{H})\right)dx
\leq 0, \ \ \ \ \mbox{holds in} \ D'(0, T).    $$

\item For any function $g \in C^1(\mathbb{R}^+)$ with the property
\be \mbox{there exists a positive constant } M=M(g), \mbox{such that
} g'(z)=0, \text{ for all } z \geq M, \label{property of
renormailized} \ee the following renormalized form of the density
equation holds in $\mathcal{D}'((0, T), U)$ \be
g(\rho)_t+\mbox{div}(g(\rho)u)+(g'(\rho)\rho-g(\rho))\mbox{div}\,u=0.
\label{renormalized form of density equ} \ee

\end{itemize}
\end{definition}

Now we can state the main result of this paper on the existence of
global weak solutions.
\begin{theorem} \label{theorem on main result}
Suppose $\gamma > \frac{3}{2}$ and the compatibility condition
\eqref{compatibility condition} is satisfied. Then for any $T>0$,
the problem \eqref{density equ}-\eqref{BC} admits a finite energy
weak solution $(\rho, u, Q)$ on $(0, T) \times U$.
\end{theorem}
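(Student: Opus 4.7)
The strategy is the three-level approximation scheme of Lions and Feireisl-Novotn\'y-Petzeltov\'a, adapted to the $Q$-tensor coupling. At the innermost level one introduces parameters $\varepsilon,\delta>0$ and $n\in\mathbb{N}$: the continuity equation carries an artificial viscosity $\varepsilon\Delta\rho$, the pressure is augmented to $\rho^\gamma+\delta\rho^\beta$ with $\beta$ large, and the velocity is projected onto an $n$-dimensional subspace $X_n\subset H_0^1(U;\mathbb{R}^3)$, while the $Q$-equation is kept as a globally solved parabolic PDE. For fixed $(n,\varepsilon,\delta)$ one closes a Schauder fixed-point argument on the map $v\mapsto u$: given $v\in C([0,T];X_n)$, solve the regularized continuity equation for $\rho_v$, the parabolic $Q$-equation for $Q_v$ driven by $v$, and finally the Galerkin momentum equation. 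A central point at this stage is the preservation of $S_0^{(3)}$ under the evolution: tracelessness holds because $\mathrm{tr}\,\mathcal{H}(Q)\equiv 0$ (this is precisely why the $-\tfrac{I_3}{3}\mathrm{tr}(Q^2)$ correction was inserted) and $\mathrm{tr}(\Omega Q-Q\Omega)=0$ by cyclicity, while symmetry holds because all right-hand-side operations preserve symmetric matrices.

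Uniform bounds at each level come from mimicking the energy identity of Proposition \ref{proposition on basic energy law}, together with Lemma \ref{lemma on lower bound of total energy} and Corollary \ref{remarkofQ}, producing bounds on $\sqrt{\rho}\,u$, $\nabla u$, $\rho\in L^\infty_tL^\gamma_x$, and $Q\in L^\infty_tH^1_x\cap L^2_tH^2_x$, $\nabla Q\in L^{10/3}_{t,x}$, $Q\in L^{10}_{t,x}$. The three successive limits are then executed: first $n\to\infty$ by weak compactness plus Aubin-Lions for both the fluid and the $Q$-tensor; next $\varepsilon\to 0$, using Bogovskii's operator tested against the momentum equation to obtain the improved integrability $\rho_\varepsilon\in L^{\gamma+\theta}$ for some $\theta>0$ and a DiPerna-Lions renormalized continuity equation; finally $\delta\to 0$ by the same integrability scheme. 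The strong $L^1$ compactness of the density in the last two limits comes, as in Lions and Feireisl, from the effective viscous flux identity combined with Feireisl's oscillation defect measure.

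The principal obstacle is establishing the effective viscous flux identity in the coupled setting. Testing the momentum equation against $\phi\,\mathcal{B}[\rho-\overline{\rho}]$ generates new commutators coming from $-L\nabla Q\odot\nabla Q+\mathcal{F}(Q)I_3+L(Q\mathcal{H}(Q)-\mathcal{H}(Q)Q)$ whose weak limits must be identified. Here the algebraic structure of $Q$ is decisive: the symmetry of $Q$ makes $Q\mathcal{H}(Q)-\mathcal{H}(Q)Q$ genuinely antisymmetric, killing its would-be pressure contribution, and tracelessness cancels the scalar corrections in $\mathcal{H}(Q)$ when paired with $\mathrm{div}\,u$. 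Combined with Aubin-Lions for $Q$ (which upgrades the weak convergence of $\nabla Q$ to strong convergence in $L^2_{t,x}$ via the $H^2$-bound and a $Q_t\in L^2_tH^{-1}_x$ bound read off the equation), these cancellations let the quadratic $Q$-terms pass to the limit and allow the Lions-Feireisl argument to close, yielding a finite energy weak solution in the sense of Definition \ref{def of finite energy weak solution} for any $T>0$ whenever $\gamma>3/2$.
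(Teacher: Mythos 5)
Your proposal follows essentially the same route as the paper: the three-level Faedo--Galerkin/artificial viscosity/artificial pressure scheme with a fixed-point construction at the Galerkin level, preservation of symmetry and tracelessness of $Q$, energy bounds via Proposition \ref{proposition on basic energy law} and Lemma \ref{lemma on lower bound of total energy}, Bogovskii-type pressure estimates, the effective viscous flux identity (where, exactly as in the paper, the skew-symmetry of $Q\mathcal{H}(Q)-\mathcal{H}(Q)Q$ kills the problematic higher-order term), and Feireisl's oscillation defect measure for the strong $L^1$ compactness of the density. The only cosmetic differences are that the paper closes the Galerkin level by a contraction (Banach) fixed point rather than Schauder, and bounds $\partial_t Q$ in $L^2_tL^{3/2}_x$ rather than $L^2_tH^{-1}_x$ for Aubin--Lions; neither affects the argument.
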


We shall prove Theorem \ref{theorem on main result} via a three-level 
approximation scheme  which consists of Faedo-Galerkin approximation, 
artificial viscosity, and artificial pressure, as well as the weak convergence method.

\section{The Faedo-Galerkin Approximation}\setcounter{equation}{0}
\subsection{Approximate solutions}
In this section, our goal is to solve the following problem
\bea &&\rho_t+\mbox{div}(\rho u)=\varepsilon\Delta\rho,
  \label{density equ Galerkin} \\
&&(\rho{u})_t+\mbox{div}(\rho{u}\otimes{u})+\nabla{P}(\rho)+\delta\nabla\rho^{\beta}
+\varepsilon\nabla{\rho}\cdot\nabla{u}
 \non\\
 &&=\mathcal{L}u-\nabla\cdot\big(L\nabla{Q}\odot\nabla{Q}
-\mathcal{F}(Q)I_3\big)+L\nabla\cdot(Q\mathcal{H}(Q)-\mathcal{H}(Q)Q),
 \label{navier-stokes Galerkin} \\
&&Q_t+u\cdot\nabla{Q}-\Omega{Q}+Q\Omega=\Gamma\mathcal{H}(Q),
 \label{order parameter Galerkin}    \eea
with modified initial conditions: \be \rho|_{t=0}=\rho_0 \in
C^3(\bar{U}), \ 0 < \underline{\rho} \leq \rho_0(x) \leq \bar{\rho},
\ \frac{\partial\rho_0}{\partial{n}}\Big|_{\partial{U}}=0,
  \label{IC 1 Gakerkin} \ee
  \be \rho{u}|_{t=0}=q(x)\in C^2(\bar{U},
\mathbb{R}^3), \ Q|_{t=0}=Q_0(x), \ \  Q_0 \in H^1(U), \ \ Q_0 \in
S_0^3 \ \ \mbox{a.e. in } U.
 \label{IC 2 Galerkin} \ee
Here $\underline{\rho}$ and $\bar{\rho}$ are two positive constants.
And it is subject to the following boundary conditions \bea
&&\frac{\partial\rho}{\partial\vec{n}}\Big|_{\partial{U}}=0,
\label{BC 1 Galerkin} \\
&&u|_{\partial{U}}=0, \ Q|_{\partial{U}}=Q_0(x).
 \label{BC 2 Galerkin} \eea
 \br It is noted that (c.f. \cite{F04}) the extra term $\varepsilon\Delta\rho$
 appearing on the right-hand side of equation \eqref{density equ Galerkin} represents
 a ``vanishing viscosity" without any physical meaning. On the other
 hand, such mathematical operation converts the original hyperbolic
 equation \eqref{density equ} to a parabolic one such that one can
 expect better regularity results for $\rho$ at this point.
 Meanwhile, the extra quantity $\varepsilon\nabla\rho\cdot\nabla{u}$ in
equation \eqref{navier-stokes Galerkin} is added to cancel extra
terms to establish necessary energy laws (see \eqref{basic energy
law 2} below). The term $\delta\rho^\beta$ is added to achieve
higher integrability for $\rho$, which is shown in the next section.
\er

To begin with, using a standard argument shown in \cite{FNP01}, we
have the following existence result.
\begin{lemma} \label{lemma on Neumann pb for density}
For the initial-boundary value problem \eqref{density equ Galerkin},
\eqref{IC 1 Gakerkin} and \eqref{BC 1 Galerkin}, there exists a
mapping $\mathcal{S}=S(u):$ $C([0, T]; C^2(\bar{U}, \mathbb{R}^3))
\rightarrow C([0, T]; C^3(U))$ with the following properties:

\noindent(i) $\rho=\mathcal{S}(u)$ is the unique classical solution
of \eqref{density equ Galerkin}, \eqref{IC 1 Gakerkin} and \eqref{BC
1 Galerkin};

\noindent(ii)
$\underline{\rho}\exp\big(-\int_0^t\|\mbox{div}\,u(s)\|_{L^\infty(U)}ds\big)
\leq \rho(t, x) \leq
\bar{\rho}\exp\big(\int_0^t\|\mbox{div}\,u(s)\|_{L^\infty(U)}ds\big)$;

\noindent(iii) For any $u_1, u_2$ in the set
\[ \mathcal{M}_k=\{u \in C([0, T]; H_0^1(U)), \ \mbox{s.t. } \|u(t)\|_{L^\infty(U)}+\|\nabla{u}(t)\|_{L^\infty(U)}
 \leq k, \forall \, t  \}, \]
it holds \be\|\mathcal{S}(u_1)-\mathcal{S}(u_2)\|_{C([0, T];
H^1(U))} \leq Tc(k, T)\|u_1-u_2\|_{C([0, T]; H_0^1(U))}
\label{contraction map for density}\ee
\end{lemma}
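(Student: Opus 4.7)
The plan is to treat the problem as a standard linear parabolic Neumann initial-boundary value problem for $\rho$ in which $u$ is a prescribed coefficient, and then establish the comparison bounds (ii) and the contraction estimate (iii) through classical maximum principle and energy methods respectively.

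For (i), I would rewrite \eqref{density equ Galerkin} as the linear parabolic equation
\begin{equation*}
\rho_t - \varepsilon \Delta \rho + u \cdot \nabla \rho + (\mathrm{div}\,u)\rho = 0
\end{equation*}
in $\rho$, with coefficients that are as smooth in $x$ as $u$ (so $C^2(\bar U)$ in space, continuous in $t$). Since $\rho_0 \in C^3(\bar U)$ satisfies the zeroth-order compatibility condition $\partial_n \rho_0 |_{\partial U}=0$, classical linear parabolic theory (e.g., a Faedo--Galerkin scheme based on eigenfunctions of the Neumann Laplacian, combined with standard bootstrap regularity and Schauder estimates) produces a unique classical solution $\rho \in C([0,T];C^3(\bar U))$. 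Uniqueness follows from the linearity via an $L^2$ energy estimate on the difference of two solutions.

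For (ii), I would use a comparison argument. Set $M(t) = \int_0^t \|\mathrm{div}\,u(s)\|_{L^\infty(U)}\,ds$ and define the candidate supersolution $\bar\eta(t) = \bar\rho\, e^{M(t)}$ and subsolution $\underline\eta(t) = \underline\rho\, e^{-M(t)}$. A direct computation gives
\begin{equation*}
\bar\eta_t + \mathrm{div}(\bar\eta u) - \varepsilon \Delta \bar\eta = \bigl(\|\mathrm{div}\,u\|_{L^\infty} + \mathrm{div}\,u\bigr)\bar\eta \;\geq\; 0,
\end{equation*}
and the analogous inequality with reversed sign for $\underline\eta$. Since both candidates are spatially constant they trivially satisfy $\partial_n=0$ on $\partial U$ and dominate/are dominated by $\rho_0$ at $t=0$. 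The maximum principle for the linear parabolic operator with Neumann boundary condition then yields $\underline\eta(t) \le \rho(t,x) \le \bar\eta(t)$.

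For (iii), let $\rho_i = \mathcal{S}(u_i)$ and $w=\rho_1-\rho_2$. Subtracting the equations gives
\begin{equation*}
w_t + \mathrm{div}(w\,u_2) + \mathrm{div}\bigl(\rho_1(u_1-u_2)\bigr) = \varepsilon \Delta w, \qquad \partial_n w|_{\partial U}=0, \quad w(0)=0.
\end{equation*}
Testing by $w$ and by $-\Delta w$ (the latter is admissible thanks to the Neumann condition), integrating by parts, and using that $\rho_1$ is bounded in $C([0,T];C^3(\bar U))$ uniformly in $u_1 \in \mathcal{M}_k$ (by (ii) combined with parabolic regularity estimates depending only on $k$, $T$ and the data), one arrives at a differential inequality of the form
\begin{equation*}
\frac{d}{dt}\|w\|_{H^1}^2 + 2\varepsilon \|w\|_{H^2}^2 \;\le\; C(k,T)\bigl(\|w\|_{H^1}^2 + \|u_1-u_2\|_{H_0^1}^2\bigr).
\end{equation*}
A Gr\"onwall argument with $w(0)=0$ then yields \eqref{contraction map for density}. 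The main technical point I expect to be the most delicate is obtaining the $H^1$-level (rather than merely $L^2$-level) contraction while controlling the source term $\mathrm{div}(\rho_1(u_1-u_2))$ by $\|u_1-u_2\|_{H_0^1}$; this is what forces the use of the higher regularity of $\rho_1$ established in (i)--(ii) and of the uniform bounds on $u_1,u_2$ enjoyed on the set $\mathcal{M}_k$.
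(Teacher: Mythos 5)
The paper does not actually prove this lemma---it simply invokes ``a standard argument shown in [FNP01]''---so there is nothing internal to compare against; your proposal reconstructs precisely that standard argument (linear parabolic theory for the equation $\rho_t-\varepsilon\Delta\rho+u\cdot\nabla\rho+(\mathrm{div}\,u)\rho=0$, the spatially constant sub/supersolutions $\underline{\rho}e^{-M(t)}$ and $\bar{\rho}e^{M(t)}$ for (ii), and energy estimates on the difference for (iii)), and the computations you display, e.g.\ $\bar\eta_t+\mathrm{div}(\bar\eta u)-\varepsilon\Delta\bar\eta=(\|\mathrm{div}\,u\|_{L^\infty}+\mathrm{div}\,u)\bar\eta\ge 0$ and the identity $w_t+\mathrm{div}(wu_2)+\mathrm{div}(\rho_1(u_1-u_2))=\varepsilon\Delta w$, are correct.

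One claim in step (iii) is overstated and should be repaired: $\rho_1$ is \emph{not} bounded in $C([0,T];C^3(\bar U))$ uniformly over $u_1\in\mathcal{M}_k$, since $\mathcal{M}_k$ controls only $\|u\|_{L^\infty}$ and $\|\nabla u\|_{L^\infty}$, while higher spatial regularity of $\rho_1$ requires higher derivatives of $u_1$ (the equation for $\nabla\rho_1$ contains $\nabla\mathrm{div}\,u_1$). What is genuinely uniform over $\mathcal{M}_k$ (for fixed $\varepsilon$) is the $L^\infty$ bound from (ii) together with the energy bounds $\|\rho_1\|_{L^\infty(0,T;H^1)}+\sqrt{\varepsilon}\,\|\rho_1\|_{L^2(0,T;H^2)}\le C(k,T,\varepsilon,\rho_0)$, obtained by testing the equation with $\rho_1$ and $-\Delta\rho_1$. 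These suffice: the awkward part of the source term is handled via $\|\nabla\rho_1\cdot(u_1-u_2)\|_{L^2}\le\|\nabla\rho_1\|_{L^3}\|u_1-u_2\|_{L^6}\le C\|\rho_1\|_{H^2}^{1/2}\|\rho_1\|_{H^1}^{1/2}\|u_1-u_2\|_{H^1_0}$, and Gr\"onwall with the time-integrable coefficient $\|\rho_1\|_{H^2}$ closes the estimate. You should also record that the resulting constant $c(k,T)$ depends on $\varepsilon$ (and on the initial data), since several terms such as $\int w\,\mathrm{div}\,u_2\,\Delta w$ must be absorbed into $\varepsilon\|\Delta w\|^2$ by Young's inequality; this is harmless because $\varepsilon$ is fixed at this level of the approximation scheme.
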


Next, we shall provide the following lemma which is useful for
subsequent arguments in the Faedo-Galerkin approximate scheme.
\bl \label{lemma on solvability of Q} For each $u \in C([0, T];
C^2_0(\bar{U}, \mathbb{R}^3))$,  there exists a unique solution $Q$
$\in$ $L^\infty([0, T];$ $H^1(U))$ $\cap$ $L^2([0, T]; H^2(U))$ to
the initial boundary value problem \bea
&&Q_t+u\cdot\nabla{Q}-\Omega{Q}+Q\Omega=\Gamma\mathcal{H}(Q),
\label{order parameter 2} \\
&&Q|_{t=0}=Q_0(x), \ \ Q|_{\partial{U}}=Q_0,
 \eea
with $Q_0$ satisfies \eqref{assumption 2 on IC}. Moreover, the above
mapping $u \mapsto Q[u]$ is continuous from each bounded set of
$C([0, T];C^2_0(\bar{U}, \mathbb{R}^3))$ to $L^\infty([0, T];$
$H^1(U))$ $\cap$ $L^2([0, T]; H^2(U))$. Furthermore, $Q[u] \in
S_0^3$ a.e. in $U \times [0, T]$. \el

\begin{proof}For each $u \in C([0, T];
C^2_0(\bar{U}, \mathbb{R}^3))$, the existence of such $Q$ is
guaranteed by standard parabolic theory (c.f. \cite{LSU68}). To
prove $Q$ lies in $L^\infty([0, T];$ $H^1(U))$ $\cap$ $L^2([0, T];
H^2(U))$, suppose $\|u\|_{C(0, T; C^2_0(\bar{U}))} \leq M$ for some
positive constant M. We multiply equation \eqref{order parameter 2}
with $-\Delta{Q}$, then take the trace and integrate over $U$, using
Young's inequality, we get
\bea \frac12\frac{d}{dt}\|\nabla{Q}\|^2+\Gamma{L}\|\Delta{Q}\|^2
&=&\int_{U}(u\cdot\nabla{Q}):\Delta{Q}\,dx+\int_{U}(Q\Omega):\Delta{Q}\,dx
-\int_{U}(\Omega{Q}):\Delta{Q}\,dx  \non\\
&&+\int_{U}\Big(a Q-bQ^2+\frac{b}{3}tr(Q^2){I_3}+cQ
tr(Q^2)\Big):\Delta{Q}\,dx \non\\
&\leq&\frac{\Gamma{L}}{4}\|\Delta{Q}\|^2+C_1\|Q\|_{H^1}^2. \non \eea
Next, multiplying equation \eqref{order parameter 2} with $Q$, in a
similar way we have \bea \frac12\frac{d}{dt}\|Q\|^2
&=&\Gamma{L}\int_{U}\Delta{Q}:Q\,dx-\int_{U}(u\cdot\nabla{Q}):Q\,dx+\int_{U}\frac{a}{2}tr(Q^2)-\frac{b}{3}tr(Q^3)
+\frac{c}{4}tr^2(Q^2)\,dx \non\\
&\leq& \frac{\Gamma{L}}{4}\|\Delta{Q}\|^2+C_2\|Q\|_{H^1}^2. \non
\eea Here $C_1>0$ and $C_2>0$ are two constants which may depend on
$M$, $a$, $b$, $c$, $\Gamma$ and $L$. Summing up the above two
equations, we obtain \be
\frac{d}{dt}\|Q\|_{H^1}^2+\Gamma{L}\|\Delta{Q}\|^2 \leq
C\|Q\|_{H^1}^2. \non \ee Using Gronwall's inequality again, we infer
that \be \|Q\|_{L^\infty(0, T; H^1(U))}+\|Q\|_{L^2(0, T; H^2(U))}
\leq C^\ast, \ee where $C^\ast>0$ is a constant which may depend on
$M$, $\|Q_0\|_{H^1(U)}$, $a$, $b$, $c$, $\Gamma$, $L$ and $T$.

To prove uniqueness, suppose $Q_1$ and $Q_2$ are two different
solutions, then $\bar{Q}=Q_1-Q_2$ satisfies \bea
{\bar{Q}}_t+u\cdot\nabla\bar{Q}-\Omega\bar{Q}+\bar{Q}\Omega&=&\Gamma\Big(L\Delta
\bar{Q}-a\bar{Q}+b\big[Q_1^2-Q_2^2-\frac{I_3}{3}tr(Q_1^2-Q_2^2)\big]\non\\
&&\;\;\;\;-cQ_1 tr(Q_1^2)+cQ_2tr(Q_2^2)\Big),
\label{diff equ of order parameter} \\
\bar{Q}|_{t=0}&=&0, \ \ \bar{Q}|_{\partial{U}}=0.
 \eea
Multiplying both sides of equation \eqref{diff equ of order
parameter} with $\bar{Q}$, then taking its trace and integrating
over $U$, due to the assumption $u \in C([0, T]; C^2_0(\bar{U},
\mathbb{R}^3))$ and the fact that $ \|Q\|_{L^\infty(0, T;
H^1(U))}\leq C^\ast$, for $Q=Q_1, Q_2$, we get \bea
&&\frac12\frac{d}{dt}\|\bar{Q}\|^2+\Gamma{L}\|\nabla{\bar{Q}}\|^2\non\\
&=&-\int_{U}(u\cdot\nabla{\bar{Q}})
:\bar{Q}\,dx-\Gamma{a}\|\bar{Q}\|^2+\Gamma{b}\int_{U}[\bar{Q}(Q_1+Q_2)]:
\bar{Q}\,dx-\frac{\Gamma{b}}{3}\int_{U}[\bar{Q}(Q_1+Q_2)]tr(\bar{Q})\,dx  \non\\
&&-\Gamma{c}\int_{U}tr(\bar{Q}^2)tr(Q_1^2)+(Q_2:\bar{Q})\big(\bar{Q}:
(Q_1+Q_2) \big)\,dx \non\\
&\leq&M\|\nabla\bar{Q}\|\|\bar{Q}\|+\Gamma|a|\|\bar{Q}\|^2+\frac{4\Gamma{b}}{3}\|\bar{Q}\|_{L^6(U)}\|\bar{Q}\|\|Q_1+Q_2\|_{L^3(U)}\non\\
&&+2\Gamma{c}\|\bar{Q}\|_{L^6(U)}\|\bar{Q}\|\big(\|Q_1\|_{L^6(U)}^2+\|Q_2\|_{L^6(U)}^2 \big)       \non\\
&\leq&\frac{\Gamma{L}}{2}\|\nabla{\bar{Q}}\|^2+C\|\bar{Q}\|^2, \eea
where we used Sobolev embedding inequality, Poincar\'e inequality
and Young's inequality to obtain the last inequality. Here $C$ is a
positive constant which U, $M$, $a$, $b$, $c$, $\Gamma$ and $L$.
Hence we arrive at the uniqueness result by applying Gronwall's
inequality.

Then we let $\{u_n\}$ be a bounded sequence in $C^2_0(\bar{U},
\mathbb{R}^3)$, with $\|u_n\|_{C(0, T; C^2_0(\bar{U}))}$ $\leq M$,
$\forall \, n \in \mathbb{N}$, and \be \displaystyle\lim_{n
\rightarrow \infty}\|u_n-u\|_{C(0, T; C_0^2(\bar{U}))}=0,
  \label{limit of u_n} \ee for some $u \in C(0, T;
C^2_0(\bar{U}))$. For the mappings $u_n \mapsto Q_n$, $u \mapsto Q$,
we denote by $\bar{Q}_n=Q_n-Q$ and we are going to show that \be
\displaystyle\lim_{n\rightarrow \infty}\|\bar{Q}_n\|_{L^\infty(0, T;
H^1(U))}+\|\bar{Q}_n\|_{L^2(0, T; H^2(U))}=0. \label{continuity of
solution operator} \ee

Taking the difference of the equations given by $Q_n$ and $Q$, then
taking the inner product with $-\Delta{\bar{Q}_n}$. we have \bea
&&\frac12\frac{d}{dt}\|\nabla\bar{Q}_n\|^2+\Gamma{L}\|\Delta\bar{Q}_n\|^2
\non\\
&=&\int_{U}(u_n\cdot\nabla{Q_n}-u\cdot\nabla{Q})
:\Delta\bar{Q}_n\,dx-\int_{U}(Q_n\Omega_n-Q\Omega):\Delta{\bar{Q}_n}\,dx
\non\\
&&+\int_{U}(\Omega_n\bar{Q}_n-\Omega{Q}):\Delta\bar{Q}_n\,dx
+\Gamma{a}\int_{U}\bar{Q}_n: \Delta\bar{Q}_n\,dx
-\Gamma{b}\int_{U}[\bar{Q}_n(Q_n+Q)]:
\Delta\bar{Q}_n\,dx  \non\\
&&+\Gamma{c}\int_{U}\big(Q_ntr(Q_n^2)-Qtr(Q^2)\big): \Delta{\bar{Q}_n} \,dx \non\\
&\doteq&I_1+\cdots+I_6, \eea with
$\Omega_n=\frac{\nabla{u}_n-\nabla^{T}u_n}{2}$, $n=1, 2, \cdots$.
Notice that $\|Q_n\|_{L^\infty(0, T; H^1(U))}+\|Q_n\|_{L^2(0, T;
H^2(U))} \leq C^\ast$ uniformly for $n \in \mathbb{N}$, we can
estimate $I_1$ to $I_6$ as follows: \bea I_1
&\leq&\int_{U}\left(|u_n\cdot\nabla\bar{Q}_n||\Delta\bar{Q}_n|+|u_n-u||\nabla{Q}||\Delta\bar{Q}_n|\right)
\,dx \non\\
&\leq&\int_{U}\left(\|u_n\|_{L^\infty(U)}|\nabla\bar{Q}_n||\Delta{\bar{Q}}_n|
+\|u_n-u\|_{L^\infty(U)}|\nabla\bar{Q}||\Delta{\bar{Q}}_n|\right)
\,dx \non\\
&\leq&M\|\nabla\bar{Q}_n\|\|\Delta\bar{Q}_n\|+\|u_n-u\|_{L^\infty(U)}
\int_{U}\left(\frac{12M}{\Gamma{L}}|\nabla\bar{Q}|^2+\frac{\Gamma{L}}{48M}|\Delta\bar{Q}_n|^2\right)\,dx
\non\\
&\leq&\frac{\Gamma{L}}{12}\|\Delta\bar{Q}_n\|^2+\frac{12M(C^\ast)^2}{\Gamma{L}}\|u_n-u\|_{L^\infty(U)}
+C\|\nabla\bar{Q}_n\|^2. \non \eea
\bea I_2
&\leq&\int_{U}\left(|\bar{Q}_n||\Omega_n||\Delta\bar{Q}_n|+|Q||\Omega_n-\Omega||\Delta\bar{Q}_n|
\right)\,dx \non\\
&\leq&M\|\bar{Q}_n\|\|\Delta\bar{Q}_n\|+\|\nabla{u}_n-\nabla{u}\|_{L^\infty(U)}
\int_{U}\left(\frac{12M}{\Gamma{L}}|Q|^2+\frac{\Gamma{L}}{48M}|\Delta\bar{Q}_n|^2\right)\,dx
\non\\
&\leq&\frac{\Gamma{L}}{12}\|\Delta\bar{Q}_n\|^2+C\|\nabla{u}_n-\nabla{u}\|_{L^\infty(U)}
+C\|\nabla\bar{Q}_n\|^2, \non
 \eea
where we used Poincar\'{e} inequality in the last step since
$\bar{Q}_n|_{\partial U}=0$. In the same way as $I_2$, we get \be
I_3 \leq
\frac{\Gamma{L}}{12}\|\Delta\bar{Q}_n\|^2+C\|\nabla{u}_n-\nabla{u}\|_{L^\infty(U)}
+C\|\nabla\bar{Q}_n\|^2. \non \ee For $I_4$ and $I_5$, using
Poincar\'{e} inequality again, it yields \be I_4 \leq
\frac{\Gamma{L}}{12}\|\Delta\bar{Q}_n\|^2+C\|\nabla\bar{Q}_n\|^2,
\non \ee  \bea I_5 \leq
\Gamma{b}\|Q_n+Q\|_{L^6(U)}\|\bar{Q_n}\|_{L^3(U)}\|\Delta
\bar{Q}_n\| \leq
\frac{\Gamma{L}}{12}\|\Delta\bar{Q}_n\|^2+C\|\nabla\bar{Q}_n\|^2.
\non \eea And \bea I_6
 &\leq&\Gamma{c}\|Q_n\|_{L^6(U)}^2\|\bar{Q_n}\|_{L^6(U)}\|\Delta \bar{Q_n}\|+
 \Gamma{c}\|Q\|_{L^6(U)}\|Q_n+Q\|_{L^6(U)}\|\bar{Q_n}\|_{L^6(U)}\|\Delta\bar{Q_n}\| \non\\
 &\leq&
\frac{\Gamma{L}}{12}\|\Delta\bar{Q}_n\|^2+C\|\nabla\bar{Q}_n\|^2.
\non \eea Putting all these estimates together, we get \be
\frac{d}{dt}\|\nabla\bar{Q}_n\|^2+\Gamma{L}\|\Delta\bar{Q}_n\|^2
\leq C\|u_n-u\|_{L^\infty(0, T; C^2_0(U))}+C\|\nabla\bar{Q}_n\|^2.
\non \ee Therefore, we conclude from Gronwall's inequality that \be
\|\nabla\bar{Q}_n\|^2(t)+\int_0^{T}\|\Delta\bar{Q}_n\|^2dt \leq
e^{CT}\|u_n-u\|_{L^\infty(0, T; C^2_0(U))}, \ \ \ \forall \, t \in
[0, T]. \ee Hence we can prove \eqref{continuity of solution
operator} by passing $n \rightarrow \infty$.

To finish the proof of this lemma, we finally show that $Q \in
S_0^3$, namely, $Q=Q^T$ and $tr(Q)=0$ a.e. in $U \times [0, T]$. It
is easy to observe that if $Q$ is a solution to \eqref{order
parameter 2}, so is $Q^T$. Hence $Q=Q^T$ a.e. by the aforementioned
uniqueness result. Then taking trace to both sides of the equation
\eqref{order parameter 2}, using the property $\Omega=-\Omega^{T}$
and $Q=Q^T$, we have \bea \frac{\partial}{\partial
t}tr(Q)-u\cdot\nabla{tr(Q)}&=&\Gamma{L}\Delta{tr(Q)}-\Gamma{a}\,tr(Q)-\Gamma{c}\,tr(Q)tr(Q^2),
\non\\
tr(Q)|_{t=0}&=&0, \ \ \ tr(Q)|_{\partial{U}}=0. \non \eea
Consequently, after multiplying both sides of the above equation
with $tr(Q)$ and integration over $U$, we can complete the proof by
the initial and boundary conditions and Gronwall's inequality.

\end{proof}

We proceed to solve \eqref{density equ Galerkin}-\eqref{BC 2
Galerkin} by the Faedo-Gelerkin approximation scheme. Let $\{\psi_n
\}_{n=1}^\infty \subset C^\infty(U, \mathbb{R}^3)$ be the
eigenfunctions of the Laplacian operator that vanish on the
boundary:
\[ -\Delta\psi_n=\lambda_n\psi_n \ \ \mbox{in } U, \ \ \psi_n|_{\partial{U}}=0. \]
Here $0<\lambda_1\leq\lambda_2\leq ...$ are eigenvalues and
$\{\psi_n\}_{n=1}^\infty$ forms an orthogonal basis of $H_0^1(U)$.
Let $X_n \doteq span\{\psi_1, \cdots, \psi_n\}$, $n=1, 2, \ldots$ be
a sequence of finite dimensional spaces.

Then we consider the following variational approximate problem for
$u_n \in C([0, T], X_n)$: $\forall \, t \in [0, T], \forall \, \psi
\in X_n$, \bea &&\int_{U}\langle \rho{u}_n(t), \psi \rangle
\,dx-\int_{U}\langle q, \psi \rangle
\,dx   \non\\
&=&\int_0^t\int_{U}\big\langle
\mathcal{L}{u}_n-\mbox{div}(\rho{u}_n\otimes{u}_n)-(\rho^\gamma+\delta\rho^\beta)-\varepsilon\nabla\rho\cdot\nabla{u}_n,
\psi \big\rangle\,dxds \non\\
&&-\int_0^t\int_{U}\left\langle\nabla\cdot\big(L\nabla{Q_n}\odot\nabla{Q_n}
-\mathcal{F}(Q_n)I_3\big), \psi\right\rangle dxds
\non\\
&&-L\int_0^t\int_{U}\left\langle\nabla\cdot(Q_n\mathcal{H}(Q_n)-\mathcal{H}(Q_n)Q_n),
\psi\right\rangle dxds.  \label{var prob for u} \eea Next, following
the idea in \cite{FNP01}, we introduce a family of operators
\[\mathcal{M}[\rho]: X_n \mapsto X_n^\ast, \ \mathcal{M}[\rho]v(w)=
\int_{U}\langle \rho{v}, w \rangle dx, \ \forall \, v, w \in X_n. \]
Here the existence and uniqueness of the solution $Q_n$ to
\eqref{order parameter Galerkin} is guaranteed by Lemma \ref{lemma
on solvability of Q}, while $\rho=\mathcal{S}(u_n)$ is the unique
classical solution to \eqref{density equ Galerkin} given by Lemma
\ref{lemma on Neumann pb for density}.

And it follows from the arguments in \cite{FNP01} that the map
\[ \rho \mapsto \mathcal{M}^{-1}[\rho] \]
from $N_{\eta}=\{\rho \in L^1(U)\, | \,\displaystyle\inf_{x\in
U}\rho \geq \eta>0\}$ is well defined and satisfies
\be
\big\|\mathcal{M}^{-1}[\rho^1]-\mathcal{M}^{-1}[\rho^2]\big\|_{\mathcal{L}(X_n^\ast,
X_n)} \leq C(n, \eta)\|\rho^1-\rho^2\|_{L^1(U)}.
 \label{contraction map property} \ee
Meanwhile, due to Lemma \ref{lemma on Neumann pb for density}, we
may rewrite the variational problem \eqref{var prob for u} as:
$\forall \, t \in [0, T], \forall \, \psi \in X_n$, \bea
 u_n(t)=\mathcal{M}^{-1}[\mathcal{S}(u_n)(t)]\Big(q^\ast
 +\int_0^t\mathcal{N}[\mathcal{S}(\rho_n(s), u_n(s), Q_n(s)]\,ds
 \Big), \label{vari prob 2 for u}
\eea with \bea \langle \mathcal{N}[\rho_n, u_n, Q_n], \psi \rangle
&=& \int_{U}\big\langle
\mathcal{L}{u}_n-\mbox{div}(\rho_n{u}_n\otimes{u}_n)-(\rho_n^\gamma+\delta\rho_n^\beta)-\varepsilon\nabla\rho_n\cdot\nabla{u}_n,
\psi \big\rangle\,dx \non\\
&&-\int_{U}\left\langle\nabla\cdot\big(L\nabla{Q_n}\odot\nabla{Q_n}
-\mathcal{F}(Q_n)I_3\big), \psi\right\rangle dx
\non\\
&&-L\int_{U}\left\langle\nabla\cdot(Q_n\mathcal{H}(Q_n)-\mathcal{H}(Q_n)Q_n),
\psi\right\rangle dx, \non\\
\rho_n&=&\mathcal{S}(u_n), \ \ Q_n=Q_n[S_n], \ \ q^\ast \in
X_n^\ast, \ \ \mbox{and } \ q^\ast(\psi)=\int_{U}\langle q, \psi
\rangle\,dx. \non
 \eea
Therefore, in view of \eqref{contraction map for density} and
\eqref{contraction map property}, using standard fixed point theorem
on $C([0, T], X_n)$, we obtain a local solution $(\rho_n, u_n, Q_n)$
on a short time interval $[0, T_n], T_n \leq T$ to the problem
\eqref{density equ Galerkin}, \eqref{order parameter Galerkin},
\eqref{var prob for u}, with initial and boundary conditions
\eqref{IC 1 Gakerkin}-\eqref{BC 2 Galerkin}.

Now we shall extend the local existence time $T_n$ to $T$. First we
can derive an energy law in a similar manner as Proposition
\ref{proposition on basic energy law}, \bea
&&\frac{d}{dt}\int_{U}\Big[\frac{\rho_n|u_n|^2}{2}+\frac{\rho_n^\gamma}{\gamma-1}+\frac{\delta\rho_n^\beta}{\beta-1}
+\mathcal{G}(Q_n)\Big]dx+\int_{U}\left(\nu|\nabla{u}|^2+(\nu+\lambda)|\mbox{div}\,{u}|^2+\Gamma
tr^2(\mathcal{H}_n)\right)dx \non\\
&&+\varepsilon\int_{U}\big(\gamma\rho_n^{\gamma-2}+\delta\beta\rho_n^{\beta-2}
\big)|\nabla\rho_n|^2dx \leq 0, \ \ \ \ \ \forall \, t \mbox{ on }
(0, T_n). \label{basic energy law 2} \eea Consequently, combined
with Lemma \ref{lemma on lower bound of total energy}, we have \be
\int_0^{T_n}\|\nabla{u}_n\|^2dt \leq \frac{2}{\nu}E_\delta[\rho_0,
q_0, Q_0], \non
 \ee
with \be E_\delta[\rho_0, q_0,
Q_0]\doteq\int_{U}\big(\frac{|q_0|^2}{2\rho_0}+\frac{\rho_0^\gamma}{\gamma-1}+\frac{\delta\rho_0^\beta}{\beta-1}
+\mathcal{G}(Q_0)\big)\,dx+\frac{(b^2-ca)^2}{2c^3}|U|. \label{delta
energy bound} \ee Meanwhile, since the $L^2$ norm and $H^2$ norm are
equivalent on each finite dimensional space $X_n$, we can deduce
from Lemma \ref{lemma on Neumann pb for density} that there exists
$C_2=C_2(n, \rho_0, q_0, Q_0, a, b, c, U)$, such that \be 0 < C_2
\leq \rho_n(t, x) \leq \frac{1}{C_2}, \ \ \ \forall \, t \in (0,
T_n), \ x \in U. \non \ee Therefore, using the energy inequality
\eqref{basic energy law 2} again, we know \be
\|u_n(t)\|_{L^\infty(U)}+\|\nabla{u}_n(t)\|_{L^\infty(U)} \leq
C_3=C_3(n, \rho_0, q_0, Q_0, a, b, c, U), \ \ \ \forall \, t \in [0,
T_n], \non \ee which allows us to extend the existence interval $(0,
T_n)$ of $u_n$ to [0, T]. Further, we know from Lemma \ref{lemma on
Neumann pb for density} and Lemma \ref{lemma on solvability of Q}
that the local solution $Q_n$ and $\rho_n$ can also be extended up
to $T$.

To finish this subsection, we summarize all the results in the
following lemma, part of which is based on \eqref{basic energy law
2}, arguments in Lemma \ref{lemma on lower bound of total energy}
and Corollary \ref{remarkofQ}, while \eqref{L1 est 5} and \eqref{L1
est 6} are due to interpolation inequalities (see \cite{FNP01} for
details).
\begin{lemma} \label{lemma on estimate for level 1 appro}
Suppose $\beta \geq 4$, there exists solution $(\rho_n, u_n, Q_n)$
to \eqref{density equ Galerkin}, \eqref{var prob for u},
\eqref{order parameter 2} in $(0, T) \times U$, and \bea
\displaystyle\sup_{t\in
[0, T]}\|\rho_n(t)\|_{L^\gamma(U)}^\gamma &\leq& C(E_\delta[\rho_0,q_0,Q_0], \gamma),  \label{L1 est 1}\\
\delta\displaystyle\sup_{t\in [0,
T]}\|\rho_n(t)\|_{L^\beta(U)}^\beta &\leq& C(E_\delta[\rho_0,q_0,Q_0], \beta),  \label{L1 est 2}\\
\displaystyle\sup_{t\in [0,
T]}\big\|\sqrt{\rho_n}(t)u_n(t)\big\|_{L^2(U)}^2 &\leq& 2E_\delta[\rho_0,q_0,Q_0],  \label{L1 est 3} \\
\|u_n\|_{L^2(0, T; H^1_0(U))} &\leq& C(E_\delta[\rho_0,q_0,Q_0], \lambda, \nu), \label{L1 est 4}  \\
\|\rho_n\|_{L^{\beta+1}((0, T)\times U)} &\leq&
C(E_\delta[\rho_0,q_0,Q_0], \varepsilon, \delta, U),
\label{L1 est 5}  \\
\varepsilon\|\nabla\rho_n\|_{L^2(0, T; L^2(U))}^2 &\leq&
C(E_\delta[\rho_0,q_0,Q_0], \beta, \delta, U, T),
\label{L1 est 6} \\
\|Q_n\|_{L^{10}((0, T)\times U)} &\leq&  C(E_\delta[\rho_0,q_0,Q_0],
a, b,c, L, \Gamma, U, T), \label{L1 est 7} \\
\|Q_n\|_{L^{\infty}(0, T; H^1(U))} &\leq&
\frac{2}{L}E_\delta[\rho_0,q_0,Q_0],
\label{L1 est 8} \\
\|\nabla{Q}_n\|_{L^{\frac{10}{3}}((0, T)\times U)} &\leq&
C(E_\delta[\rho_0,q_0,Q_0],
a, b,c, L, \Gamma, U, T), \label{L1 est 10} \\
\|Q_n\|_{L^2(0, T; H^2(U))} &\leq&  C(E_\delta[\rho_0,q_0,Q_0], a,
b, c, L, \Gamma, U, T). \label{L1 est 9},
 \eea

\end{lemma}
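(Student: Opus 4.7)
The plan is to obtain all the listed estimates from a single approximate energy law derived for the Faedo--Galerkin scheme, together with standard interpolation arguments that are already familiar from \cite{FNP01}. The existence of $(\rho_n, u_n, Q_n)$ on $[0,T_n]$ is already in hand from the fixed-point construction based on Lemma 3.1 and Lemma 3.2 together with \eqref{contraction map property}; the task is to produce uniform-in-$n$ a priori bounds on the maximal interval, which then lets us take $T_n = T$.

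First I would derive the approximate energy identity \eqref{basic energy law 2}. Take $\psi = u_n(t) \in X_n$ in \eqref{var prob for u} (legitimate since $u_n(t) \in X_n$) and differentiate in $t$; combine with the parabolic density equation \eqref{density equ Galerkin} multiplied by $\tfrac{\gamma}{\gamma-1}\rho_n^{\gamma-1} + \tfrac{\delta\beta}{\beta-1}\rho_n^{\beta-1} - \tfrac{|u_n|^2}{2}$, and with equation \eqref{order parameter Galerkin} tested against $-\mathcal{H}(Q_n)$. The computations are exactly those of Proposition 2.1; the key new cancellations are that the extra term $\varepsilon \nabla\rho_n \cdot \nabla u_n$ in \eqref{navier-stokes Galerkin} kills the stray contribution coming from $\varepsilon \Delta \rho_n$ in \eqref{density equ Galerkin} via integration by parts, and the $\varepsilon(\gamma \rho_n^{\gamma-2} + \delta\beta \rho_n^{\beta-2})|\nabla \rho_n|^2$ term is produced as a nonnegative dissipation. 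The symmetry and trace-free properties of $Q_n$ are used exactly as in \eqref{part2 of basic energy law} to handle the $Q\Omega - \Omega Q$ terms.

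Once \eqref{basic energy law 2} is in place, estimates \eqref{L1 est 1}--\eqref{L1 est 4} and \eqref{L1 est 6} follow by integrating in time: Lemma 2.2, applied pointwise in $t$ to the $Q_n$ contribution of $\mathcal{E}$, yields a uniform lower bound that absorbs the $Q$-energy and leaves $\|\rho_n\|_{L^\gamma}^\gamma$, $\delta\|\rho_n\|_{L^\beta}^\beta$, $\|\sqrt{\rho_n}u_n\|_{L^2}^2$ on the left, controlled by $E_\delta[\rho_0,q_0,Q_0]$ as defined in \eqref{delta energy bound}. The dissipation term $\nu\|\nabla u_n\|^2 + (\nu+\lambda)\|\mathrm{div}\,u_n\|^2$ integrated in time gives \eqref{L1 est 4} after using the Korn-type lower bound guaranteed by \eqref{relation between nu and lambda}, while the $\varepsilon$-dissipation with $\beta \geq 4$ (so that $\rho_n^{\beta-2} \geq \rho_n^2$ is not needed, but rather the elementary $\gamma \rho^{\gamma-2}|\nabla\rho|^2 \geq c|\nabla \rho|^2$ argument on the set $\{\rho_n \geq 1\}$ combined with the $L^\gamma$ bound) gives \eqref{L1 est 6}. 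The $Q$-estimates \eqref{L1 est 7}--\eqref{L1 est 9} are then copied verbatim from the proof of Corollary 2.1: $\Gamma \int tr^2(\mathcal{H}_n)$ integrated in time plus the uniform $H^1$ bound from \eqref{L1 est 8} (which itself comes from the lower bound in Lemma 2.2) provides $L\Delta Q_n \in L^2L^2$, whence \eqref{L1 est 9}, and Gagliardo--Nirenberg upgrades this to \eqref{L1 est 7} and \eqref{L1 est 10}.

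The one step that is genuinely extra is \eqref{L1 est 5}, the improved integrability of $\rho_n$ in space-time. I would argue as in \cite{FNP01}: multiply the regularised density equation by $\rho_n^\beta$, integrate over $(0,T)\times U$, use the parabolic structure to pick up $\varepsilon \int |\nabla \rho_n^{(\beta+1)/2}|^2$ on one side, and bound the convective term $\int \rho_n^\beta\, u_n \cdot \nabla \rho_n$ by Hölder in terms of the already controlled norms \eqref{L1 est 2}, \eqref{L1 est 4}, \eqref{L1 est 6}; interpolating $\rho_n \in L^\infty_tL^\beta \cap L^2_t H^1$ against $L^2_tL^6$ via Sobolev embedding in $\mathbb{R}^3$ then yields the $L^{\beta+1}((0,T)\times U)$ bound with a constant depending on $\varepsilon, \delta, U$. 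This interpolation step, and the careful bookkeeping needed to verify that the $\varepsilon\nabla\rho_n\cdot\nabla u_n$ cancellation in the approximate energy law actually closes in the Galerkin setting (where $u_n$ is only $X_n$-valued), are the only delicate points; both are by now standard and carry over without modification because the additional coupling terms involving $Q_n$ are already absorbed by the $\mathcal{G}(Q_n)$ dissipation in \eqref{basic energy law 2}. Finally, the uniform bounds above imply $\rho_n$ stays bounded away from $0$ and $\infty$ on $[0,T_n]$ with constants independent of $T_n$ (via Lemma 3.1(ii) and the $H^1_0$ bound on $u_n$), so the local existence time can be continued to $T$, completing the lemma.
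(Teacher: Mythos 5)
Your overall route is the same as the paper's: derive the approximate energy law \eqref{basic energy law 2} by testing \eqref{var prob for u} with $u_n$, combining with the regularized continuity equation and with \eqref{order parameter Galerkin} tested against $-\mathcal{H}(Q_n)$ (the $\varepsilon\nabla\rho_n\cdot\nabla u_n$ cancellation and the sign of the $\varepsilon(\gamma\rho_n^{\gamma-2}+\delta\beta\rho_n^{\beta-2})|\nabla\rho_n|^2$ term are exactly as you say), then read off \eqref{L1 est 1}--\eqref{L1 est 4} via Lemma \ref{lemma on lower bound of total energy}, obtain \eqref{L1 est 7}--\eqref{L1 est 9} by repeating Corollary \ref{remarkofQ}, and get \eqref{L1 est 5} by the Feireisl--Novotn\'y--Petzeltov\'a interpolation argument. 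The continuation from $T_n$ to $T$ also matches, though you should say explicitly that passing from the $H^1_0$ bound on $u_n$ to the $L^\infty$ control of $\nabla u_n$ needed in Lemma \ref{lemma on Neumann pb for density}(ii) uses equivalence of norms on the finite-dimensional space $X_n$ (so those constants are $n$-dependent, which is harmless here).

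The one step that does not work as written is your derivation of \eqref{L1 est 6}. You propose to extract $\varepsilon\|\nabla\rho_n\|_{L^2_{t,x}}^2$ directly from the dissipation term $\varepsilon\int(\gamma\rho_n^{\gamma-2}+\delta\beta\rho_n^{\beta-2})|\nabla\rho_n|^2$ via a pointwise lower bound on the coefficient. On $\{\rho_n\ge 1\}$ this is fine because $\beta\ge 4$ gives $\delta\beta\rho_n^{\beta-2}\ge\delta\beta$; but on the near-vacuum set $\{\rho_n<1\}$ the coefficient $\gamma\rho_n^{\gamma-2}+\delta\beta\rho_n^{\beta-2}$ tends to $0$ as $\rho_n\to 0$ whenever $\gamma>2$, and the uniform lower bound on $\rho_n$ from Lemma \ref{lemma on Neumann pb for density}(ii) is $n$-dependent, so it cannot be invoked. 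An $L^\gamma$ bound on $\rho_n$ gives no information about $\nabla\rho_n$ there. The correct (and standard) argument, which is what the paper means by ``interpolation inequalities (see \cite{FNP01})'', is to test \eqref{density equ Galerkin} with $\rho_n$ itself:
\begin{equation*}
\frac12\frac{d}{dt}\|\rho_n\|_{L^2(U)}^2+\varepsilon\|\nabla\rho_n\|_{L^2(U)}^2
=-\frac12\int_U\rho_n^2\,\mbox{div}\,u_n\,dx
\le\frac12\|\mbox{div}\,u_n\|_{L^2(U)}\|\rho_n\|_{L^4(U)}^2,
\end{equation*}
and then use $\|\rho_n\|_{L^4(U)}\le C\|\rho_n\|_{L^\beta(U)}$ (this is where $\beta\ge4$ enters) together with \eqref{L1 est 2} and \eqref{L1 est 4}; integrating in time gives \eqref{L1 est 6} with the stated dependence on $\delta$, $\beta$, $U$, $T$. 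With this replacement the rest of your argument for \eqref{L1 est 5} (testing with $\beta\rho_n^{\beta-1}$ to produce $\varepsilon\int|\nabla\rho_n^{\beta/2}|^2$ --- note the exponent is $\beta/2$, not $(\beta+1)/2$ --- followed by Sobolev embedding and interpolation against $L^\infty_tL^\beta_x$) goes through.
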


\subsection{Passing to limit}
Now we shall employ the estimate in Lemma \ref{lemma on estimate for
level 1 appro} to pass to the limit as $n \rightarrow \infty$ of the
solution sequence $(\rho_n, u_n, Q_n)$ to obtain a solution to the
problem \eqref{density equ Galerkin}-\eqref{BC 2 Galerkin}. To this
end, we have to ensure that all these a priori estimates are
independent of $n$. Here and after, for the sake of convenience, we
do not distinguish sequence convergence and subsequence convergence.

To begin with, it follows from \cite{FNP01} that if $\beta > 4$,
$\gamma > \frac32$, \bea &&u_n \rightarrow u \ \  \mbox{weakly in }
L^2(0, T; H_0^1(U, \mathbb{R}^3)), \label{weak convergence 1}
\\
&&\rho_n \rightarrow \rho \ \ \mbox{in } L^4((0, T)\times U), \label{weak convergence 1-1}  \\
&&\rho_n^\gamma \rightarrow \rho^\gamma, \ \
 \rho_n^\beta \rightarrow
\rho^\beta \ \mbox{in } L^1((0, T) \times U);
 \eea
Meanwhile, using Sobolev inequality, we deduce from \eqref{L1 est
7}-\eqref{L1 est 9} that \bea
\Big\|\frac{\partial{Q_n}}{\partial{t}}\Big\|_{L^{\frac32}(U)}
&\leq&
\|u_n\|_{L^6(U)}\|\nabla{Q}_n\|_{L^2(U)}+2\|\nabla{u}_n\|_{L^{2}(U)}\|Q_n\|_{L^{6}(U)}
+\Gamma\|\mathcal{H}_n\|_{L^{\frac32}(U)} \non\\
&\leq&C\|\nabla{u}_n\|+C\|\mathcal{H}_n\|, \eea which infers that
\[ \Big\|\frac{\partial{Q}_n}{\partial{t}}\Big\|_{L^2(0, T; L^{\frac32}(U))} \leq
 C(E_\delta[\rho_0,q_0,Q_0], a,
b, c, L, \Gamma, \lambda, \nu, U, T).  \] Combined with \eqref{L1
est 9}, we know from the well-known Aubin-Lions compactness theorem
that
\[ \{Q_n\} \mbox{ is precompact in } L^2(0, T; H^1(U)). \]
Therefore, we conclude that \be Q_n \rightarrow Q \ \mbox{weakly in
} L^2(0, T; H^2(U)), \ \mbox{strongly in} \ L^2(0, T; H^1(U)).
\non\ee Hence it is easy to show that $Q$ is a weak solution to
\eqref{order parameter Galerkin}. Furthermore, we get from \eqref{L1
est 1}, \eqref{L1 est 3} and \eqref{L1 est 4} that $\{\rho_nu_n\}$
is uniformly bounded in $L^\infty(0, T;
L^{\frac{2\gamma}{\gamma+1}}(U))$. Consequently, using \eqref{weak
convergence 1} and \eqref{weak convergence 1-1}, we have
 \be \rho_nu_n \rightarrow \rho{u} \ \ \mbox{weakly star in }
 L^\infty(0, T; L^{\frac{2\gamma}{\gamma+1}}(U)),
  \ee
then we can pass to limit in the continuity equation \eqref{density
equ Galerkin}.

Finally, in order the prove the limit $u$ satisfies equation
\eqref{navier-stokes Galerkin}, we need the following lemma in
\cite{FNP01}.
\bl \label{lemma on density regularity} There exist $r>1$, $s>2$
such that $\partial_t\rho_n$, $\Delta\rho_n$ are uniformly bounded
in $L^r((0, T)\times U)$, $\nabla\rho_n$ is uniformly bounded in
$L^s(0, T)\times U)$. And the limit function $\rho$ satisfies
equation \eqref{density equ Galerkin} almost everywhere on $(0, T)
\times U$ and the boundary condition \eqref{BC 1 Galerkin} in the
trace sense. \el

We now show that for any fixed test function $\psi$ in \eqref{var
prob for u}, $\int_{U}\langle \rho_n{u}_n(t), \psi \rangle \,dx$ is
equi-continuous in $t$. By Lemma \ref{lemma on estimate for level 1
appro} and Lemma \ref{lemma on density regularity}, we get for any
$0<\zeta<1$, it holds \bea &&\Big|\int_t^{t+\zeta}\int_{U}\langle
L\nabla{Q}_n\odot\nabla{Q}_n-\mathcal{F}(Q_n)I_3-LQ_n\mathcal{H}(Q_n)+L\mathcal{H}(Q_n)Q_n,
\nabla\psi \rangle\, dxds \Big| \non\\
&\leq&C\zeta\|\nabla\psi\|_{L^\infty(U)}\displaystyle\sup_{0 \leq
t\leq T}(\|\nabla{Q}_n(t)\|^2+1)+C\int_{t}^{t+\zeta}\|Q_n\|\|\Delta{Q}_n\|\|\nabla\psi\|_{L^{\infty}(U)}\,ds \non \\
&\leq&C\zeta+C\|\nabla
\psi\|_{L^{\infty}(U)}\Big(\int_{t}^{t+\zeta}\|\Delta{Q}_n\|^2\,ds\Big)^{\frac12}\Big(\int_{t}^{t+\zeta}\|Q_n\|^2ds\Big)^{\frac12}
   \non\\
&\leq&C\zeta^{\frac12}.  \non\eea \be
\Big|\int_t^{t+\zeta}\int_{U}\langle
\rho_n^{\gamma}+\delta\rho_n^\beta, \psi \rangle \,dxdt \Big| \leq
C\zeta\|\nabla\psi\|_{L^\infty(U)}\displaystyle\sup_{0 \leq
t\leq T}\int_{U}\left(\rho_n^\gamma+\delta\rho_n^\beta \right)\,dx \leq C\zeta. \non\\
\ee \be \Big|\int_t^{t+\zeta}\int_{U}\langle \mathcal{L}u_n, \psi
\rangle\, dxds \Big| \leq
C\zeta\|\nabla\psi\|_{L^2(U)}\Big(\int_0^{T}\|\nabla{u}_n(t)\|^2\,dt\Big)^\frac12
\leq C\zeta. \non\ee \be \Big|\int_t^{t+\zeta}\int_{U}\langle
\nabla\cdot(\rho_nu_n\otimes u_n), \psi \rangle\, dxds \Big| \leq
C\zeta\|\nabla\psi\|_{L^\infty}\displaystyle\sup_{0 \leq t\leq
T}\int_{U}\rho_n|u_n|^2dx \leq C\zeta.  \non\ee \bea
&&\Big|\int_t^{t+\zeta}\int_{U}\langle
\varepsilon\nabla\rho_n\cdot\nabla{u}_n, \psi \rangle \,dxds \Big|
\non\\
&\leq&\varepsilon\|\psi\|_{L^\infty(U)}\Big(\int_t^{t+\zeta}\int_{U}|\nabla{u}_n|^2dt\Big)^\frac12
\Big(\int_t^{t+\zeta}\big(\int_{U}|\nabla{\rho}_n|^2\big)^{\frac{s}{2}}\Big)^{\frac{1}{s}}\zeta^{\frac12-\frac{1}{s}}
\non\\
&\leq&C\zeta^{\frac12-\frac{1}{s}}, \non
 \eea
where we used Lemma \ref{lemma on density regularity} for the last
estimate. Hence we know (c.f. Corollary 2.1 in \cite{F04}) \be
\rho_nu_n \rightarrow \rho{u} \ \mbox{ in } \ C\big([0, T];
L_{weak}^{\frac{2\gamma}{\gamma+1}}\big). \label{convergence in weak
star} \ee Due to the compact embedding
$L^\frac{2\gamma}{\gamma+1}(U) \hookrightarrow H^{-1}(U)$ if $\gamma
> \frac32$, we infer from \eqref{convergence in weak star} that
\[ \rho_nu_n \rightarrow \rho{u} \ \mbox{ in } C([0, T]; H^{-1}(U)), \]
which together with \eqref{weak convergence 1} indicates
\[ \rho_nu_n\otimes{u}_n \rightarrow \rho{u}\otimes{u} \ \ \mbox{in } \mathcal{D}'((0, T)\times U).
\] Finally,
the convergence of the remaining term $\nabla\rho_n\cdot\nabla{u}_n
\rightarrow \nabla\rho\cdot\nabla{u}$ in $\mathcal{D}'((0, T)\times
U)$ follows \cite{FNP01}.

In all, we summarize the above results as follows.
\begin{proposition} \label{proposition on level 1 existence}
The problem \eqref{density equ Galerkin}-\eqref{BC 2 Galerkin}
admits a weak solution $(\rho, u, Q)$ which satisfies all estimates
in Lemma \ref{lemma on estimate for level 1 appro}. Moreover, the
energy inequality \eqref{basic energy law 2} holds in
$\mathcal{D}'(0, T)$ and there exists $r>1$, such that $\rho_t,
\Delta\rho \in L^r((0, T)\times U)$ and the equation \eqref{density
equ Galerkin} is satisfied pointwisely in $(0, T)\times U$. In
addition, $Q \in S_0^3$ a.e. in $[0, T] \times U$.

\end{proposition}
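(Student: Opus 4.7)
The plan is to prove Proposition \ref{proposition on level 1 existence} by constructing solutions at level $n$ via a Schauder fixed-point argument in $C([0,T],X_n)$, deriving uniform bounds independent of $n$, and then passing to the limit $n\to\infty$ using compactness. Most of the ingredients are already in place in the excerpt, so the proof is a matter of chaining them together carefully.

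The first phase is local existence of $(\rho_n,u_n,Q_n)$ on some $[0,T_n]$. Given $u\in C([0,T],X_n)$, Lemma \ref{lemma on Neumann pb for density} produces $\rho=\mathcal{S}(u)$ with positive lower bound depending on $\|\mathrm{div}\,u\|_{L^\infty}$, and Lemma \ref{lemma on solvability of Q} produces $Q=Q[u]\in L^\infty(0,T;H^1)\cap L^2(0,T;H^2)$, symmetric and traceless. Substituting into the right hand side of \eqref{vari prob 2 for u}, I get a map $u\mapsto\mathcal{T}(u)$ on $C([0,T],X_n)$. The contraction estimates \eqref{contraction map for density} and \eqref{contraction map property} together with the continuous dependence of $Q$ on $u$ from Lemma \ref{lemma on solvability of Q} show $\mathcal{T}$ is Lipschitz on bounded sets of $C([0,T_n],X_n)$, and taking $T_n$ small enough produces a unique fixed point by the Banach contraction principle.

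The second phase is deriving the energy inequality \eqref{basic energy law 2} for the approximate system, by testing \eqref{var prob for u} with $\psi=u_n(t)$ (which is admissible since $u_n(t)\in X_n$), using \eqref{density equ Galerkin} to handle the $\varepsilon$-terms and $\delta\nabla\rho^\beta$-term as in Proposition \ref{proposition on basic energy law}, and testing \eqref{order parameter Galerkin} with $-\mathcal{H}(Q_n)$. The cancellation of the cross coupling terms between the momentum and Q-equations goes through exactly as in Proposition \ref{proposition on basic energy law}, relying on $Q=Q^T$ and $\Omega+\Omega^T=0$. Combined with the lower bound from Lemma \ref{lemma on lower bound of total energy}, this yields all the $n$-independent bounds \eqref{L1 est 1}--\eqref{L1 est 9}; the finite-dimensional equivalence of norms on $X_n$ then provides the $n$-dependent $L^\infty$ bound on $u_n$ and on $\rho_n,\rho_n^{-1}$, allowing extension of $T_n$ to $T$ by a standard continuation argument.

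The third and most delicate phase is passing to the limit $n\to\infty$. From Lemma \ref{lemma on estimate for level 1 appro} I extract the weak convergences \eqref{weak convergence 1}--\eqref{weak convergence 1-1} and, invoking Lemma \ref{lemma on density regularity}, the pointwise equation for $\rho$ and strong $L^4$ convergence of $\rho_n$. For the Q-equation I need strong convergence of $Q_n$ in $L^2(0,T;H^1(U))$, which follows from Aubin--Lions once I bound $\partial_t Q_n$ in $L^2(0,T;L^{3/2})$ using the equation \eqref{order parameter Galerkin} and the uniform bounds on $u_n$ and $\Delta Q_n$; this lets me pass to the limit in $Q\Omega$, $\Omega Q$, and the cubic term $Q\,\mathrm{tr}(Q^2)$. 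The hard part is passing to the limit in the momentum equation, specifically in the quadratic coupling $\nabla Q_n\odot\nabla Q_n$, $Q_n\mathcal{H}(Q_n)$ and $\mathcal{H}(Q_n)Q_n$: for these I use strong $H^1$ convergence of $Q_n$ together with weak $H^2$ convergence to justify $\int \nabla Q_n\odot\nabla Q_n:\nabla\psi\,dx\to \int\nabla Q\odot\nabla Q:\nabla\psi\,dx$ and likewise for the commutator terms. Equicontinuity in $t$ of $\int_U\langle\rho_n u_n,\psi\rangle\,dx$, established by the $\zeta^{1/2}$ and $\zeta^{1/2-1/s}$ estimates already sketched in the excerpt, combined with the compact embedding $L^{2\gamma/(\gamma+1)}\hookrightarrow H^{-1}$ for $\gamma>3/2$, yields $\rho_n u_n\to\rho u$ in $C([0,T];H^{-1})$ and hence $\rho_n u_n\otimes u_n\to \rho u\otimes u$ in $\mathcal{D}'$, completing the limit passage in \eqref{navier-stokes Galerkin}. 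The symmetric/traceless property of the limit $Q$ is preserved under strong convergence, and the energy inequality \eqref{basic energy law 2} passes to the limit by lower semicontinuity of convex functionals.
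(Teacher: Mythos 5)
Your proposal is correct and follows essentially the same route as the paper: a fixed-point argument in $C([0,T],X_n)$ built on Lemmas \ref{lemma on Neumann pb for density} and \ref{lemma on solvability of Q}, the discrete energy law \eqref{basic energy law 2} plus finite-dimensional norm equivalence to continue to $[0,T]$ and obtain the bounds of Lemma \ref{lemma on estimate for level 1 appro}, and then Aubin--Lions for $Q_n$ together with the equicontinuity of $\int_U\langle\rho_n u_n,\psi\rangle\,dx$ and the compact embedding $L^{2\gamma/(\gamma+1)}(U)\hookrightarrow H^{-1}(U)$ to pass to the limit in all nonlinear terms. No substantive differences from the paper's argument.
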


\section{Vanishing artificial viscosity}\setcounter{equation}{0}
Our next aim is to let $\varepsilon \rightarrow 0$ in the modified
continuity equation \eqref{density equ Galerkin} and velocity
equation \eqref{navier-stokes Galerkin} for passing to the limit. We
denote by $(\rho_{\varepsilon}, u_{\varepsilon}, Q_\varepsilon)$ the
corresponding solution of the problem \eqref{density equ
Galerkin}-\eqref{BC 2 Galerkin}. At this point, we are lack in the
bound of $\nabla\rho_\varepsilon$ (see \eqref{L1 est 6}) and
consequently, it is essential for the study of strong compactness of
$\{\rho_\varepsilon\}_{\varepsilon > 0}$ in $L^1((0, T)\times U)$.

\subsection{Density estimates independent of viscosity}
To begin with, we deduce from \eqref{L1 est 4} and \eqref{L1 est 6}
that \bea
\varepsilon\nabla\rho_\varepsilon\cdot\nabla{u_\varepsilon}
\rightarrow 0 \ \mbox{in } L^1((0, T)\times U), \label{var conv 1}\\
\varepsilon\Delta\rho_\varepsilon \rightarrow 0 \ \mbox{in } L^2(0,
T; H^{-1}(U)). \label{var conv 2}\eea And in the same way as last
section, we get \be Q_\varepsilon \rightarrow Q \ \mbox{weakly in }
L^2(0, T; H^2(U)) \ \mbox{and strongly in } L^2(0, T; H^1(U)).
\label{var conv 3} \ee \br Since $Q_\varepsilon \in S_0^3$ a.e. in
$[0, T]\times U$, it is also true that its limit $Q \in S_0^3$ a.e.
in $[0, T]\times U$ because of the above convergence result
\eqref{var conv 3}. \er

More importantly, we can prove the following estimate of density
independent of $\varepsilon$.
\begin{lemma}\label{lemma on density estimate without varepsilon}
Suppose $(\rho_\varepsilon, u_\varepsilon, Q_\varepsilon)$ is a
sequence of solutions to the problem \eqref{density equ
Galerkin}-\eqref{BC 2 Galerkin} constructed in Proposition
\ref{proposition on level 1 existence}. Then \be
\|\rho_\varepsilon\|_{L^{\gamma+1}((0, T)\times
U)}+\|\rho_\varepsilon\|_{L^{\beta+1}((0, T)\times U)} \leq
C(E_\delta(\rho_0,q_0,Q_0), a, b, c, \delta, \beta, \lambda, \nu, L,
U, T). \ee
\end{lemma}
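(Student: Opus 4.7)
The plan is to adapt the classical Bogovskii-operator technique from \cite{FNP01} to the coupled system by choosing the test function
$$\psi_\varepsilon(t,x) = \mathcal{B}\bigl[\rho_\varepsilon(t,x) - \tfrac{1}{|U|}\!\int_U\!\rho_\varepsilon(t,y)\,dy\bigr],$$
where $\mathcal{B}\colon L^p_0(U)\to W^{1,p}_0(U,\mathbb{R}^3)$ is the Bogovskii inverse-divergence operator satisfying $\mathrm{div}\,\mathcal{B}[f]=f$ and $\|\mathcal{B}[f]\|_{W^{1,p}}\le C\|f\|_{L^p}$ for $p\in(1,\infty)$. Testing the momentum equation \eqref{navier-stokes Galerkin} against $\psi_\varepsilon$ and integrating over $(0,T)\times U$, the pressure pairing produces exactly
$$\int_0^T\!\!\int_U\bigl(\rho_\varepsilon^{\gamma+1}+\delta\rho_\varepsilon^{\beta+1}\bigr)\,dxdt$$
on the left-hand side, up to a harmless mean-value correction bounded uniformly by \eqref{L1 est 1}--\eqref{L1 est 2}. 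The strategy is then to show that every remaining term is bounded by a constant independent of $\varepsilon$, giving the desired a priori estimate.

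First I would dispatch the ``standard'' terms as in \cite{FNP01}. The time derivative $\langle(\rho_\varepsilon u_\varepsilon)_t,\psi_\varepsilon\rangle$ is handled by integration by parts in time and by rewriting $\partial_t\psi_\varepsilon=\mathcal{B}[\partial_t\rho_\varepsilon]$ via the parabolic continuity equation \eqref{density equ Galerkin} as $\partial_t\psi_\varepsilon=-\rho_\varepsilon u_\varepsilon+\varepsilon\nabla\rho_\varepsilon+\mathrm{const.}$, whose $L^q$ bounds come directly from \eqref{L1 est 3}--\eqref{L1 est 6}. The convective term $\rho_\varepsilon u_\varepsilon\otimes u_\varepsilon:\nabla\psi_\varepsilon$, the Lam\'e term $\mathcal{L}u_\varepsilon\cdot\psi_\varepsilon$, and the artificial-viscosity term $\varepsilon\nabla\rho_\varepsilon\cdot\nabla u_\varepsilon\cdot\psi_\varepsilon$ are estimated by H\"older, Sobolev embedding and \eqref{L1 est 1}--\eqref{L1 est 4}, with the choice of Bogovskii exponent $p$ made so that $\|\nabla\psi_\varepsilon\|_{L^p}\le C\|\rho_\varepsilon\|_{L^\beta}\le C$ and the resulting $\rho_\varepsilon^{\gamma+1}$ and $\rho_\varepsilon^{\beta+1}$ contributions appear only with small coefficient to be absorbed via Young's inequality, exactly as for compressible Navier--Stokes.

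The genuinely new contribution is the $Q$-tensor stress
$$\int_0^T\!\!\int_U\bigl(L\nabla Q_\varepsilon\odot\nabla Q_\varepsilon-\mathcal{F}(Q_\varepsilon)I_3-L Q_\varepsilon\mathcal{H}(Q_\varepsilon)+L\mathcal{H}(Q_\varepsilon)Q_\varepsilon\bigr):\nabla\psi_\varepsilon\,dxdt.$$
For the first two pieces I would use H\"older with $\nabla Q_\varepsilon\in L^{10/3}$ and $Q_\varepsilon\in L^{10}$ from \eqref{L1 est 7}, \eqref{L1 est 10}, paired with $\nabla\psi_\varepsilon$ in an appropriate $L^p$. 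For the commutator $Q_\varepsilon\mathcal{H}(Q_\varepsilon)-\mathcal{H}(Q_\varepsilon)Q_\varepsilon$, substitute the definition of $\mathcal{H}$; the principal piece $L[Q_\varepsilon,\Delta Q_\varepsilon]$ is estimated via $Q_\varepsilon\in L^\infty_tH^1_x$ and $\Delta Q_\varepsilon\in L^2_tL^2_x$ from \eqref{L1 est 8}, \eqref{L1 est 9}, producing a product in $L^2_tL^{3/2}_x$ that pairs with $\nabla\psi_\varepsilon\in L^\infty_tL^\beta_x$ since $\beta\ge 4>3$. The lower-order polynomial pieces of $\mathcal{H}$ are handled similarly.

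The main obstacle, as in the pure compressible case, is the bookkeeping that ensures the $\rho_\varepsilon^{\gamma+1}$ and $\rho_\varepsilon^{\beta+1}$ terms arising on the right-hand side (chiefly from the convective term $\rho_\varepsilon u_\varepsilon\otimes u_\varepsilon:\nabla\psi_\varepsilon$) can be \emph{absorbed} into the left-hand side via Young's inequality, which is why the assumptions $\gamma>3/2$ and $\beta\ge 4$ enter. The Q-tensor contributions turn out to be strictly subcritical given Corollary \ref{remarkofQ}, so they do not force any additional restriction on $\beta$ or $\gamma$; nevertheless, one must verify uniformity of all constants in $\varepsilon$, which ultimately reduces to the viscosity-independent bounds \eqref{L1 est 1}--\eqref{L1 est 4} together with Lemma \ref{lemma on density regularity}.
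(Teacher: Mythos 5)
Your proposal is correct and follows essentially the same route as the paper: test the momentum equation with the Bogovskii function $\mathcal{B}(\rho_\varepsilon-m_0)$, isolate $\int\!\!\int(\rho_\varepsilon^{\gamma+1}+\delta\rho_\varepsilon^{\beta+1})$ plus a mean-value correction, and bound every remaining term (including the elastic stress via $Q_\varepsilon\in L^{10}$, $\nabla Q_\varepsilon\in L^{10/3}$, $\Delta Q_\varepsilon\in L^2_tL^2_x$) using the $\varepsilon$-independent estimates of Lemma \ref{lemma on estimate for level 1 appro}. The only cosmetic difference is that no Young-type absorption of $\rho_\varepsilon^{\beta+1}$ is actually needed here, since with $\beta>4$ the convective term is bounded directly by $\|\rho_\varepsilon\|_{L^\infty_tL^\beta_x}^2\|\nabla u_\varepsilon\|_{L^2_tL^2_x}^2$; the absorption argument you describe is the one used later for the $\delta$-independent estimate.
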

\begin{proof}
The proof is similar to \cite{FNP01} (c.f. Lemma 3.1). We introduce
an operator (\cite{B80, GA94}) $$\mathcal{B}: \{f \in L^p(U):
\int_{U}f\,dx =0 \}\mapsto [H_0^{1, p}(U)]^3,$$ such that
$v=\mathcal{B}(f)$ solves the following problem
\[ \mbox{div}\,v = f \ \mbox{in } U, \ v|_{\partial U}=0. \]
Then we take the test function for \eqref{navier-stokes Galerkin} as
\[ \psi(t)\mathcal{B}(\rho_\varepsilon-m_0), \ \psi \in \mathcal{D}(0, T), \
0 \leq \psi \leq 1, \ m_0=\frac{1}{|U|}\int_{U}\rho(t)dx.  \] We
note that the total mass $m_0$ is a constant such that the test
function is well defined. Then direct calculations lead to
\begin{eqnarray*}
&&\int_0^{T}\int_{U}\psi(\rho_\varepsilon^{\gamma+1}+\delta\rho_\varepsilon^{\beta+1})dxdt\\
&=&m_0\int_{0}^T\psi\Big(\int_{U}\rho_\varepsilon^{\gamma}+\delta\rho_\varepsilon^{\beta}\,dx\Big)dt
+(\lambda+\nu)\int_{0}^T\psi\int_{U}\rho_\varepsilon\mbox{div}u_\varepsilon\,dxdt\\
&&-\int_{0}^T\psi_t\int_{U}\big\langle\rho_\varepsilon
u_\varepsilon,
\mathcal{B}(\rho_\varepsilon-m_0)\big\rangle\,dxdt+\nu\int_{0}^T\psi\int_{U}\nabla
u_\varepsilon : \nabla\mathcal{B}(\rho_\varepsilon-m_0)\,dxdt\\
&&-\int_{0}^T\psi\int_{U}\rho_\varepsilon{u}_\varepsilon\otimes{u}_\varepsilon
:\nabla\mathcal{B}(\rho_\varepsilon-m_0)\,dxdt-\varepsilon\int_{0}^T\psi\int_{U}\big\langle\rho_\varepsilon{u}_\varepsilon
, \mathcal{B}(\Delta\rho_\varepsilon)\big\rangle\,dxdt\\
&&-\int_{0}^T\psi\int_{U}\big\langle\rho_\varepsilon{u}_\varepsilon
,\mathcal{B}(\mbox{div}(\rho_\varepsilon{u}_\varepsilon))\big\rangle\,dxdt
+\varepsilon\int_{0}^T\psi\int_{U}\nabla{u}_\varepsilon :
\mathcal{B}(\rho_\varepsilon-m_0)\nabla\rho_\varepsilon\,dxdt\\
&&+\int_{0}^T\psi\int_{U}\big(\nabla{Q}_\varepsilon\otimes\nabla{Q}_\varepsilon
-\mathcal{F}(Q_\varepsilon)I_3\big)
:\nabla\mathcal{B}(\rho_\varepsilon-m_0)\,dxdt\\
&&-L\int_{0}^T\psi\int_{U}\big(Q_\varepsilon\mathcal{H}({Q}_\varepsilon)-
\mathcal{H}({Q}_\varepsilon){Q}_\varepsilon\big):
\nabla\mathcal{B}(\rho_\varepsilon-m_0)\,dxdt\\
&\doteq&I_1+\cdots+I_{10}.
\end{eqnarray*}
Now we estimate $I_1, \cdots, I_{10}$. By \eqref{L1 est 1},
\eqref{L1 est 2}, \eqref{L1 est 4} and \eqref{L1 est 5}, we get
\[|I_1| \leq |m_0|T\Big(\displaystyle\sup_{t\in [0,T]}\|\rho_\varepsilon(t)\|_{L^\gamma(U)}^\gamma
+\delta\displaystyle\sup_{t\in [0,
T]}\|\rho_n(t)\|_{L^\beta(U)}^\beta\Big) \leq
C(E_\delta(\rho_0,q_0,Q_0), \gamma, \beta, T).
\]
\[|I_2| \leq (\lambda+\nu)\|\rho_\varepsilon\|_{L^2(0, T; L^2(U))}\|\nabla{u}_\varepsilon\|_{L^2(0, T; L^2(U))}
\leq C(E_\delta(\rho_0,q_0,Q_0), \delta, \gamma, \beta, \lambda,
\nu, U).
\]
By the property of the operator $\mathcal{B}$, we know
$$\|\mathcal{B}(\rho_\varepsilon-m_0)\|_{H_0^{1, \beta}(U)} \leq
C(\beta, U)\|\rho_\varepsilon-m_0\|_{L^\beta(U)}.$$ Using Sobolev
embedding theorem for $\beta > 4$, \eqref{L1 est 2} and \eqref{L1
est 3}, we get
\be |I_3| \leq
C\int_0^T\|\sqrt{\rho_\varepsilon}\|_{L^2(U)}\|\sqrt{\rho_\varepsilon}u_\varepsilon\|_{L^2(U)}
\|\mathcal{B}(\rho_\varepsilon-m_0)\|_{L^\infty(U)} \leq
C(E_\delta(\rho_0,q_0,Q_0), \delta, \gamma, \beta, U). \non \ee
Similar to the estimate for $I_3$, it holds
\[ |I_4| \leq C\|u_\varepsilon\|_{L^2(0, T; H^1(U))}\|\rho_\varepsilon\|_{L^2(0,T; L^2(U))}
\leq C(E_\delta(\rho_0,q_0,Q_0), \delta, \beta, \lambda, \nu, U).
\]
\[|I_5| \leq \int_0^T\|\rho_\varepsilon\|_{L^3(U)}\|u_\varepsilon\|_{L^6(U)}^2\|\rho_\varepsilon\|_{L^3(U)}\,dt
\leq C(E_\delta(\rho_0,q_0,Q_0), \delta, \beta, \lambda, \nu, U).\]
And by \eqref{L1 est 4}, \eqref{L1 est 5}, \eqref{L1 est 6}, the
property of operator $\mathcal{B}$ and Sobolev embedding theorem, it
yields  \[ |I_6| \leq
C\varepsilon\int_0^{T}\|\rho_\varepsilon\|_{L^3(U)}\|u_\varepsilon\|_{L^6(U)}\|\nabla\rho_\varepsilon\|_{L^2(U)}\,dt
\leq C(E_\delta(\rho_0,q_0,Q_0), \delta, \beta, \lambda, \nu, U, T),
\ \ \mbox{for } \varepsilon \leq 1.
\]
Next, since the operator $\mathcal{B}$ enjoys the property
$$\|\mathcal{B}(f)\|_{L^2(U)} \leq C(U)\|g\|_{L^2(U)}\text{ for }\,\,
\mathcal{B}(f)=\mbox{div}\,g$$ with $$g\cdot \vec{n}|_{\partial
U}=0,$$ we infer from \eqref{L1 est 4} and \eqref{L1 est 5} that
\bea |I_7| &\leq&
\int_0^T\|\rho_\varepsilon\|_{L^3(U)}\|u_\varepsilon\|_{L^6(U)}\|\rho_\varepsilon{u}_\varepsilon\|_{L^2(U)}\,dt
\leq
\int_0^T\|\rho_\varepsilon\|_{L^3(U)}^2\|\nabla{u}_\varepsilon\|_{L^2(U)}^2\,dt
\non\\
&\leq& C(E_\delta(\rho_0,q_0,Q_0), \delta, \beta, \lambda, \nu, U).
\non \eea Further, by \eqref{L1 est 4} and \eqref{L1 est 6}, we
obtain \bea |I_8| &\leq&
\sqrt{\varepsilon}\|\sqrt{\varepsilon}\nabla\rho_\varepsilon\|_{L^2(0,
T; L^2(U))}\|\nabla{u}_\varepsilon\|_{L^2(0, T;
L^2(U))}\|\mathcal{B}(\rho_\varepsilon-m_0)\|_{L^\infty(0, T;
L^\infty(U))} \non\\
&\leq&C(\beta,U)\sqrt{\varepsilon}\|\sqrt{\varepsilon}\nabla\rho_\varepsilon\|_{L^2(0,
T; L^2(U))}\|\nabla{u}_\varepsilon\|_{L^2(0, T;
L^2(U))}\|\rho_\varepsilon\|_{L^\infty(0, T; L^\beta(U))} \non\\
&\leq& C(E_\delta(\rho_0,q_0,Q_0), \delta, \beta, \lambda, \nu, U,
T), \ \ \mbox{for } \varepsilon \leq 1. \eea Then by \eqref{L1 est
2}, \eqref{L1 est 7}, and \eqref{L1 est 10}, we know
 \bea |I_9|&\leq&\int_0^{T}\|\nabla{Q}_\varepsilon\|_{L^{\frac{10}{3}}(U)}^2\|\nabla\mathcal{B}(\rho_\varepsilon-m_0)\|_{L^{\frac52}(U)}
+\|\mathcal{F}(Q_\varepsilon)\|_{L^{\frac{5}{2}}(U)}\|\nabla\mathcal{B}(\rho_\varepsilon-m_0)\|_{L^{\frac53}(U)}\,dt  \non\\
 &\leq& C(L,
 U)\int_0^{T}\big(\|\nabla{Q}_\varepsilon\|_{L^{\frac{10}{3}}(U)}^2+1\big)\|\nabla\mathcal{B}(\rho_\varepsilon-m_0)\|_{L^{\frac52}(U)}
\,dt \non\\
&&+C(a,b,c)\int_0^T \big(\|tr^2(Q^2)\|_{L^{\frac52}(U)}+1)\|\nabla\mathcal{B}(\rho_\varepsilon-m_0)\|_{L^{\frac53}(U)}\,dt      \non\\
 &\leq& C(E_\delta(\rho_0,q_0,Q_0), a, b, c, \delta, \beta, \lambda, \nu, L, U,
T).\non \eea Finally, we deduce from \eqref{L1 est 2}, \eqref{L1 est
7} and \eqref{L1 est 9} \bea |I_{10}| &\leq&
2L\int_0^{T}\|Q_\varepsilon\|_{L^{10}(U)}\|\Delta{Q}_\varepsilon\|_{L^2(U)} \|\nabla\mathcal{B}(\rho_\varepsilon-m_0)\|_{L^{\frac{5}{2}}(U)}\,dt \non\\
&\leq& C(E_\delta(\rho_0,q_0,Q_0), a, b, c, \delta, \beta, \lambda,
\nu, L, U, T). \non \eea Hence we finish the proof by summing up all
previous results for $I_1, \cdots, I_{10}$.

\end{proof}
Lemma \ref{lemma on density estimate without varepsilon} together
with \eqref{L1 est 2} imply that \be \rho_\varepsilon \rightarrow
\rho \ \mbox{in } C(0, T; L_{weak}^\beta(U)) \ \mbox{and weakly in }
L^{\beta+1}((0, T)\times U). \label{var conv 4} \ee Moreover, \be
u_\varepsilon \rightarrow u \ \mbox{weakly in } L^2(0, T; H_0^1(U)),
\label{var conv 5} \ee which together with \eqref{L1 est 1} and
\eqref{var conv 4} yield
 \be \rho_\varepsilon{u}_\varepsilon \rightarrow \rho{u} \ \mbox{in } C\big([0, T]; L_{weak}^{\frac{2\gamma}{\gamma+1}}(U)\big).
 \label{var conv 6}  \ee
Applying the same arguments as in the last section, and noting that
$\frac{2\gamma}{\gamma+1} > \frac{6}{5}$, it then follows from
\eqref{var conv 5}  and \eqref{var conv 6} that \bea
\rho_\varepsilon{u}_\epsilon\otimes{u}_\varepsilon \rightarrow
\rho{u}\otimes{u} \ \mbox{in } \mathcal{D}'((0, T)\times U).
\label{var conv 7} \eea Meanwhile, \eqref{var conv 3} implies that
\bea &&-\nabla\cdot(\nabla{Q}_\varepsilon
\odot\nabla{Q}_\varepsilon-\mathcal{F}(Q_\varepsilon)I_3)
+L\nabla\cdot(Q_\varepsilon\mathcal{H}({Q}_\varepsilon)-\mathcal{H}\big(Q_\varepsilon){Q}_\varepsilon\big)
\non\\
&\rightarrow&
-\nabla\cdot(\nabla{Q}\odot\nabla{Q}-\mathcal{F}(Q)I_3)+L\nabla\cdot\big(Q\mathcal{H}(Q)-\mathcal{H}(Q){Q}\big)
\ \ \mbox{in } \mathcal{D}'((0, T)\times U). \label{var conv 8b}
 \eea
In conclusion, we prove the limit $(\rho, u, Q)$ satisfies the
following equations in $\mathcal{D}'((0, T)\times U)$:
\bea \rho_t+\nabla\cdot(\rho{u})&=&0, \label{density equ viscosity} \\
(\rho{u})_t+\nabla\cdot(\rho{u}\otimes{u})+\nabla{p}&=&\mathcal{L}u-\nabla\cdot\big(L\nabla{Q}\odot\nabla{Q}
-\mathcal{F}(Q)I_3\big) \non\\
&&+L\nabla\cdot\big(Q\mathcal{H}(Q)-\mathcal{H}(Q)Q\big),
 \label{navier-stokes viscosity} \\
Q_t+u\cdot\nabla{Q}-\Omega{Q}+Q\Omega&=&\Gamma\mathcal{H}(Q),
 \label{order parameter viscosity} \eea
with the initial data
\[  \rho(0)=\rho_0, \ \ (\rho{u})(0)=q_0, \ \ Q(0)=Q_0. \]

\br Using Lemma \ref{lemma on density estimate without varepsilon}
and the assumption $\beta > \gamma$ we know the pressure $p$ in the
above system \eqref{density equ viscosity}-\eqref{order parameter
viscosity} has the property  \be
\rho_\varepsilon^\gamma+\delta\rho_\varepsilon^{\beta} \rightarrow p
\ \mbox{weakly in } L^{\frac{\beta+1}{\beta}}((0, T)\times U).
\label{var conv 9}\ee \er

The remaining part of this section is to improve the convergence in
\eqref{var conv 9} to be strong in $L^1((0, T)\times U)$, such that
\[ p=\rho^\gamma+\delta\rho^\beta. \]

\subsection{The effective viscous flux}
The quantity
$\rho^\gamma+\delta\rho^\beta-(\lambda+2\nu)\mbox{div}\,u$ is
usually referred to as the effective viscous flux. We shall find
that it plays an essential role on our coupled system (see also
\cite{H95, L98}).

\bl \label{lemma on viscous flux} Let $(\rho_\varepsilon,
u_\varepsilon, Q_\varepsilon)$ be a sequence of solutions
constructed in Proposition \ref{proposition on level 1 existence},
and $(\rho, u, Q)$ be its limit satisfying \eqref{density equ
viscosity}-\eqref{order parameter viscosity}, respectively. Then for
any $\psi \in \mathcal{D}(0, T)$, $\phi \in \mathcal{D}(U)$, it
holds \bea &&\displaystyle\lim_{\varepsilon \rightarrow
0^{+}}\int_0^T\psi\int_{U}\phi\big(\rho_{\varepsilon}^\gamma+\delta\rho_{\varepsilon}^\beta
-(\lambda+2\nu)\mbox{div}\,u_{\varepsilon}
\big)\rho_\varepsilon\,dxdt \non\\
&=&\int_0^T\psi\int_{U}\phi\big(p-(\lambda+2\nu)\mbox{div}\,u\big)\rho\,dxdt
\label{limit of viscous flux} \eea \el
\br
 It is worth pointing out that from the fluid mechanics point of view, the quantity
 $P-(\lambda+2\nu)\mbox{div}\,u$ appearing in \eqref{limit of viscous flux} is the
amplitude of the normal viscous stress augmented by the hydrostatic
pressure. \er

\begin{proof}
We consider the singular integral operator
\[ \mathcal{A}_i=\partial_{x_i}\Delta^{-1}, \]
or equivalently in terms of its Fourier symbol
\[ \mathcal{A}_j(\xi)=\frac{-\sqrt{-1}\xi_j}{|\xi|^2}. \]
By Proposition \ref{proposition on level 1 existence},
$\rho_\varepsilon, u_\varepsilon$ satisfy \eqref{density equ
Galerkin} a.e. on $(0, T)\times U$ with the boundary condition
\eqref{BC 1 Galerkin}. In particular, we extend $\rho_\varepsilon$,
$u_\varepsilon$ to be zero outside $U$. Then it yields \be
\partial_t\rho_\varepsilon+\mbox{div}\,(\rho_\varepsilon{u}_\varepsilon)=\varepsilon\mbox{div}(1_{U}\nabla\rho_\varepsilon)
\ \ \mbox{in } \mathcal{D}'((0, T)\times \mathbb{R}^3) \ee with
$1_{U}$ the characteristic function on $U$. Next, we consider the
vector-valued test function \[ \varphi(t,
x)=\psi(t)\phi(x)\mathcal{A}(\rho_\varepsilon)\doteq\psi(t)\phi(x)\big(\mathcal{A}_1(\rho_\varepsilon),
\mathcal{A}_2(\rho_\varepsilon), \mathcal{A}_3(\rho_\varepsilon)
\big), \ \ \mbox{where} \ \ \psi \in \mathcal{D}(0, T), \ \phi \in
\mathcal{D}(U).
\]
Analogously, after direct calculations we derive \bea
&&\int_0^T\psi\int_{U}\phi\big(\rho_{\varepsilon}^\gamma+\delta\rho_{\varepsilon}^\beta
-(\lambda+2\nu)\mbox{div}\,u_{\varepsilon}
\big)\rho_\varepsilon\,dxdt\non\\
&=&(\lambda+\nu)\int_0^T\psi\int_{U}\mbox{div}\,u_{\varepsilon}
\langle \nabla\phi,
\mathcal{A}(\rho_\varepsilon)\rangle\,dxdt-\int_0^T\psi\int_{U}(\rho_{\varepsilon}^\gamma+\delta\rho_{\varepsilon}^\beta)
\langle \nabla\phi, \mathcal{A}(\rho_\varepsilon)\rangle\,dxdt\non\\
&&-\int_0^T\psi\int_{U}\rho_{\varepsilon}u_\varepsilon\otimes
u_\varepsilon:\nabla\phi\otimes\mathcal{A}(\rho_\varepsilon)\,dxdt
-\int_0^T\psi_t\int_{U}\phi \langle \rho_{\varepsilon}u_\varepsilon,
\mathcal{A}(\rho_\varepsilon)\rangle\,dxdt \non\\
&&+\nu\int_0^T\psi\int_{U}\mathcal{A}(\rho_\varepsilon)\otimes\nabla\phi
: \nabla{u}_\varepsilon \,dxdt
-\nu\int_0^T\psi\int_{U}\nabla\mathcal{A}(\rho_\varepsilon) : {u}_\varepsilon\otimes\nabla\phi \,dxdt    \non\\
&&+\nu\int_0^T\psi\int_{U}\rho_\varepsilon({u}_\varepsilon\cdot\nabla\phi)
\,dxdt+\int_0^T\psi\int_{U}\phi\big[\rho_\varepsilon\nabla_j\mathcal{A}_i(\rho_\varepsilon{u}_\varepsilon^j)
-\rho_\varepsilon{u}_\varepsilon^j\nabla_j\mathcal{A}_i(\rho_\varepsilon)\big]\,dxdt  \non\\
&&-\int_0^T\psi\int_{U}\big(L\nabla{Q}_\varepsilon\odot\nabla{Q}_\varepsilon-\mathcal{F}(Q_\varepsilon)I_3
\big):\nabla\big(\phi\mathcal{A}(\rho_\varepsilon)\big)\,dxdt\non\\
&&+L\int_0^T\psi\int_{U}\big(Q_\varepsilon\mathcal{H}({Q}_\varepsilon)-\mathcal{H}({Q}_\varepsilon){Q}_\varepsilon
\big):
\nabla\big(\phi\mathcal{A}(\rho_\varepsilon)\big) \,dxdt   \non\\
&&-\varepsilon\int_0^T\psi\int_{U}\phi\big\langle \rho_{\varepsilon}
u_\varepsilon,\mathcal{A}(\mbox{div}\,(1_{U}\nabla\rho_\varepsilon))\big\rangle\,dxdt
+\varepsilon\int_0^T\psi\int_{U}\phi \nabla{u}_\varepsilon:
\mathcal{A}(\rho_\varepsilon)\otimes\nabla\rho_\varepsilon
\,dxdt\non\\
&\doteq&I_1+\cdots+I_{12}. \label{test fun viscosity-1} \eea In the
meantime, we can repeat the above procedures to the limit equations
\eqref{density equ viscosity} and \eqref{navier-stokes viscosity},
since we have the following result from \cite{FNP01}.
\begin{lemma} \label{lemma on 0 extension}
Suppose $\rho \in L^2((0, T)\times U)$, $u \in L^2(0, T; H_0^1(U))$
is a solution of \eqref{density equ viscosity} in $\mathcal{D}'((0,
T)\times U)$. Then the equation \eqref{density equ viscosity} still
holds in $\mathcal{D}'((0, T)\times \mathbb{R}^3)$, provided $(\rho,
u)$ are extended to be $0$ in $\mathbb{R}^3\backslash U$.
\end{lemma}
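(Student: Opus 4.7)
The plan is to test the distributional continuity equation on $(0,T)\times\mathbb{R}^3$ against an arbitrary $\phi\in\mathcal{D}((0,T)\times\mathbb{R}^3)$, use that the zero-extended $\tilde\rho,\tilde u$ vanish outside $U$ to reduce the integral to $U$, then approximate $\phi|_U$ by smooth cutoffs compactly supported in $U$ and drive the resulting boundary-layer term to zero via Hardy's inequality.

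Concretely, I will show that
$$\int_0^T\!\!\int_U \bigl(\rho\,\partial_t\phi+\rho u\cdot\nabla\phi\bigr)\,dx\,dt=0,\qquad\forall\,\phi\in\mathcal{D}((0,T)\times\mathbb{R}^3),$$
starting from the identical statement restricted to $\phi\in\mathcal{D}((0,T)\times U)$. Using the smoothness of $\partial U$, I introduce cutoffs $\chi_\epsilon\in C_c^\infty(U)$ with $0\le\chi_\epsilon\le 1$, $\chi_\epsilon\equiv 1$ on $\{d(x,\partial U)>2\epsilon\}$, $\chi_\epsilon\equiv 0$ on $\{d(x,\partial U)<\epsilon\}$ and $|\nabla\chi_\epsilon|\le C/\epsilon$. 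Since $\chi_\epsilon\phi$ is then an admissible test function on $U$, the hypothesis yields $I_\epsilon+J_\epsilon=0$ with
$$I_\epsilon=\int_0^T\!\!\int_U\chi_\epsilon\bigl(\rho\,\partial_t\phi+\rho u\cdot\nabla\phi\bigr)\,dx\,dt,\qquad J_\epsilon=\int_0^T\!\!\int_U\phi\,\rho u\cdot\nabla\chi_\epsilon\,dx\,dt.$$
The term $I_\epsilon$ converges to the desired integral by Lebesgue dominated convergence, exploiting $\rho u\in L^1((0,T)\times U)$ which follows from $\rho\in L^2$ and $u\in L^2(0,T;H_0^1(U))\hookrightarrow L^2(0,T;L^6(U))$ via H\"older.

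The heart of the argument is showing $J_\epsilon\to 0$. Let $V_\epsilon=\{x\in U:d(x,\partial U)<2\epsilon\}$ be the support of $\nabla\chi_\epsilon$. The membership $u(t)\in H_0^1(U)$ activates Hardy's inequality
$$\Big\|\frac{u(t)}{d(\cdot,\partial U)}\Big\|_{L^2(U)}\le C\|u(t)\|_{H_0^1(U)},$$
and since $d\le 2\epsilon$ on $V_\epsilon$ this delivers $\|u(t)\|_{L^2(V_\epsilon)}\le C\epsilon\,\|u(t)\|_{H_0^1(U)}$. The $1/\epsilon$ blow-up of $\nabla\chi_\epsilon$ is thereby absorbed, and Cauchy--Schwarz in space and time yields
$$|J_\epsilon|\le C\,\|\phi\|_{L^\infty}\,\|\rho\|_{L^2((0,T)\times V_\epsilon)}\,\|u\|_{L^2(0,T;H_0^1(U))}.$$
Because $|V_\epsilon|\to 0$ and $\rho\in L^2((0,T)\times U)$, dominated convergence gives $\|\rho\|_{L^2((0,T)\times V_\epsilon)}\to 0$, hence $J_\epsilon\to 0$. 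Sending $\epsilon\to 0$ in $I_\epsilon+J_\epsilon=0$ closes the argument.

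The main obstacle is precisely the control of $J_\epsilon$: a naive bound on $\nabla\chi_\epsilon$ explodes as $\epsilon\to 0$ and must be compensated by $\mathcal{O}(\epsilon)$ decay of $\|u\|_{L^2(V_\epsilon)}$, which in turn requires the Dirichlet boundary trace $u|_{\partial U}=0$ built into $H_0^1(U)$. Without this boundary condition on $u$, the zero-extension would generate a singular boundary contribution supported on $\partial U$ and the conclusion would fail; the hypothesis $u\in L^2(0,T;H_0^1(U))$ is thus used in an essential way.
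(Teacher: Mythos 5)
Your proof is correct. The paper does not prove this lemma at all --- it is quoted directly from \cite{FNP01} --- and your argument (cutoffs $\chi_\epsilon$ concentrating at $\partial U$, with the $1/\epsilon$ blow-up of $\nabla\chi_\epsilon$ absorbed by the $\mathcal{O}(\epsilon)$ smallness of $\|u(t)\|_{L^2(V_\epsilon)}$ coming from Hardy's inequality and $u(t)\in H_0^1(U)$) is precisely the standard proof given in that reference, so nothing further is needed.
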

Consequently, the counterpart to \eqref{test fun viscosity-1} is
\bea &&\int_0^T\psi\int_{U}\phi(p -(\lambda+2\nu)\mbox{div}\,u)\rho\,dxdt\non\\
&=&(\lambda+\nu)\int_0^T\psi\int_{U}\mbox{div}\,u \langle
\nabla\phi, \mathcal{A}(\rho)\rangle\,dxdt-\int_0^T\psi\int_{U}P
\langle \nabla\phi, \mathcal{A}(\rho)\rangle\,dxdt\non\\
&&-\int_0^T\psi\int_{U}\rho{u}\otimes{u}:\nabla\phi\otimes\mathcal{A}(\rho)\,dxdt
-\int_0^T\psi_t\int_{U}\phi \langle \rho{u},
\mathcal{A}(\rho)\rangle\,dxdt \non\\
&&+\nu\int_0^T\psi\int_{U}\mathcal{A}(\rho)\otimes\nabla\phi :
\nabla{u} \,dxdt
-\nu\int_0^T\psi\int_{U}\nabla\mathcal{A}(\rho) : {u}\otimes\nabla\phi \,dxdt    \non\\
&&+\nu\int_0^T\psi\int_{U}\rho(u\cdot\nabla\phi) \,dxdt
+\int_0^T\psi\int_{U}\phi\big[\rho\nabla_j\mathcal{A}_i(\rho{u}^j)
-\rho{u}^j\nabla_j\mathcal{A}_i(\rho)\big]\,dxdt  \non\\
&&-\int_0^T\psi\int_{U}\big(L\nabla{Q}\odot\nabla{Q}-\mathcal{F}(Q)I_3
\big):\nabla\big(\phi\mathcal{A}(\rho)\big)\,dxdt\non\\
&&+L\int_0^T\psi\int_{U}\big(Q\mathcal{H}(Q)-\mathcal{H}(Q){Q}
\big):
\nabla\big(\phi\mathcal{A}(\rho)\big) \,dxdt   \non\\
&\doteq&J_1+\cdots+J_{10}. \label{test fun viscosity-2} \eea

Due to the classical $L^p$-theory for elliptic problems, we have
 \be \|\mathcal{A}(v)\|_{H^{1,s}(U)} \leq C(s, U)\|v\|_{L^s(\mathbb{R}^3)}, \ \ \ 1 < s <
 \infty,
 \label{Mikhlin Multiplier theorem}     \ee
which combined with \eqref{var conv 4} lead to \be
\mathcal{A}(\rho_\varepsilon) \rightarrow \mathcal{A}(\rho) \ \
\mbox{in } C\big(\overline{(0, T)\times U } \big),
 \label{var conv 8} \ee and henceforth
\be \nabla\mathcal{A}(\rho_\varepsilon) \rightarrow
\nabla\mathcal{A}(\rho) \ \ \mbox{in } C\big([0, T];
L^\beta_{weak}(U) \big).  \label{var conv 9}
 \ee   Therefore, direct derivations
from \eqref{var conv 5} and \eqref{var conv 8} show  that
\[ I_1 \rightarrow J_1, \ \ I_5 \rightarrow J_5,  \ \ \mbox{as } \ \varepsilon \rightarrow 0.   \]
Meanwhile, \eqref{var conv 6} and \eqref{var conv 8} indicate that
\[ I_4 \rightarrow J_4,  \ \ \mbox{as } \ \varepsilon \rightarrow 0.   \]
By \eqref{var conv 5} and \eqref{var conv 6}, we know
$\rho_\varepsilon{u}_\varepsilon \otimes{u}_\varepsilon \in
L^2\big(0, T; L^{\frac{6\gamma}{3+4\gamma}}(U)\big) $. Then it
infers from \eqref{var conv 7} that
\[ \rho_\varepsilon{u}_\varepsilon \otimes{u}_\varepsilon \rightarrow \rho{u}\otimes{u} \ \ \mbox{weakly in }
L^2\big(0, T; L^{\frac{6\gamma}{3+4\gamma}}(U)\big). \]
Consequently, we infer from \eqref{var conv 8} that
\[ I_3 \rightarrow J_3,  \ \ \mbox{as } \ \varepsilon \rightarrow 0,   \]
provided $\beta \geq \frac{6\gamma}{2\gamma-3}$. Note that
\eqref{var conv 9} indicates $\nabla\mathcal{A}(\rho_\varepsilon)
\rightarrow \nabla\mathcal{A}(\rho)$ strongly in $C([0, T],
H^{-1}(U))$, hence we get from \eqref{var conv 5} that
\[ I_6 \rightarrow J_6,  \ \ \mbox{as } \ \varepsilon \rightarrow 0.  \]
Analogously, since $\beta > 4$, we can apply similar argument as for
$I_6$ to conclude
\[ I_7 \rightarrow J_7,  \ \ \mbox{as } \ \varepsilon \rightarrow 0.  \]
For $I_8$, it follows from \eqref{var conv 4}, \eqref{var conv 6}
and \eqref{Mikhlin Multiplier theorem} that if $\beta >
\frac{6\gamma}{2\gamma-3}$, then
\[ \rho_\varepsilon\nabla_j\mathcal{A}_i(\rho_\varepsilon{u}_\varepsilon^j)
-\rho_\varepsilon{u}_\varepsilon^j\nabla_j\mathcal{A}_i(\rho_\varepsilon)
\in L^\infty(0, T; L^\alpha(U)), \ \ \mbox{with } \
\frac{\gamma+1}{2\gamma}+\frac{1}{\beta}=\frac{1}{\alpha} <
\frac{5}{6}.
\]
Hence we infer from the celebrated Div-Curl Lemma and compact
embedding $L^\alpha(U)$ $\hookrightarrow$ $H^{-1}(U)$ that
\[  \rho_\varepsilon\nabla_j\mathcal{A}_i(\rho_\varepsilon{u}_\varepsilon^j)
-\rho_\varepsilon{u}_\varepsilon^j\nabla_j\mathcal{A}_i(\rho_\varepsilon)
\rightarrow
\rho\nabla_j\mathcal{A}_i(\rho{u})-\rho{u}\nabla_j\mathcal{A}_i(\rho)
\ \ \mbox{strongly in } H^{-1}(U).
\]
Then applying Lebesgue convergence theorem, we obtain
\[  \rho_\varepsilon\nabla_j\mathcal{A}_i(\rho_\varepsilon{u}_\varepsilon^j)
-\rho_\varepsilon{u}_\varepsilon^j\nabla_j\mathcal{A}_i(\rho_\varepsilon)
\rightarrow
\rho\nabla_j\mathcal{A}_i(\rho{u})-\rho{u}\nabla_j\mathcal{A}_i(\rho)
\ \ \mbox{strongly in } L^2\big(0, T; H^{-1}(U)\big),  \] which
combined with \eqref{var conv 5} yields
\[ I_8 \rightarrow J_8, \ \ \mbox{as } \ \varepsilon \rightarrow 0 \]
Using \eqref{L1 est 2}, \eqref{L1 est 4}, \eqref{L1 est 6} and
\eqref{Mikhlin Multiplier theorem}, we get
\[ I_{11} \rightarrow 0, \ \ I_{12} \rightarrow 0, \ \ \mbox{as } \ \varepsilon \rightarrow 0. \]
It remains to prove the corresponding convergence results for $I_9$
and $I_{10}$, which are related to the order parameter $Q$. Notice
that both $I_9$ and $J_9$ can be decomposed in the following manner:
\bea
I_{9}&=&-\int_0^T\psi\int_{U}\big(L\nabla{Q}_\varepsilon\odot\nabla{Q}_\varepsilon-\mathcal{F}({Q}_\varepsilon)I_3\big)
: (\mathcal{A}(\rho_\varepsilon)\otimes\nabla\phi) \,dxdt
\non\\
&&-\int_0^T\psi\int_{U}\phi\big(L\nabla{Q}_\varepsilon\odot\nabla{Q}_\varepsilon-\mathcal{F}({Q}_\varepsilon)I_3\big)
: \nabla\mathcal{A}(\rho_\varepsilon) \,dxdt \non\\
&\doteq&I_{9a}+I_{9b}. \label{hard term 1} \\
J_{9}&=&-\int_0^T\psi\int_{U}\big(L\nabla{Q}\odot\nabla{Q}-\mathcal{F}(Q)I_3\big)
: \mathcal{A}(\rho)\otimes\nabla\phi \,dxdt
\non\\
&&-\int_0^T\psi\int_{U}\phi\big(L\nabla{Q}\odot\nabla{Q}-\mathcal{F}({Q})I_3\big)
: \nabla\mathcal{A}(\rho) \,dxdt \non\\
&\doteq&J_{9a}+J_{9b}. \label{limit term 1} \eea Due to \eqref{var
conv 3} and \eqref{var conv 8}, the convergence of $I_{9a}$ to
$J_{9a}$ is straightforward. While for $I_{9b}$ and $J_{9b}$, by the
property of the singular integral operator $\mathcal{A}$, it holds
\bea
&&I_{9b}-J_{9b}\non\\
&=&-L\int_0^T\psi\int_{U}\phi(\nabla{Q}_\varepsilon-\nabla{Q})\odot\nabla{Q}_\varepsilon
:\nabla\mathcal{A}(\rho_\varepsilon)\,dxdt \non\\
&&-L\int_0^T\psi\int_{U}\phi\nabla{Q}\odot(\nabla{Q}_\varepsilon-\nabla{Q})
: \nabla\mathcal{A}(\rho_\varepsilon)\,dxdt    \non\\
&&-L\int_0^T\psi\int_{U}\phi\nabla{Q}\odot\nabla{Q} :
\nabla\big(\mathcal{A}(\rho_\varepsilon)-\mathcal{A}(\rho)\big)\,dxdt
\non\\
&&+\int_0^T\psi\int_{U}\phi\big(\mathcal{F}(Q_\varepsilon)-\mathcal{F}(Q)\big)\rho_\varepsilon\,dxdt
+\int_0^T\psi\int_{U}\phi\mathcal{F}(Q)(\rho_\varepsilon-\rho)\,dxdt
\non\\
&\doteq&K_{9ba}+K_{9bb}+K_{9bc}+K_{9bd}+K_{9be}. \eea Using
\eqref{L1 est 2}, \eqref{var conv 3} and \eqref{Mikhlin Multiplier
theorem}, we find $K_{9ba} \rightarrow 0$, $K_{9bb} \rightarrow 0$.
By \eqref{L1 est 7}, \eqref{L1 est 8}, \eqref{var conv 3},
\eqref{var conv 4} and Lemma \ref{lemma on density estimate without
varepsilon}, we know $K_{9bd} \rightarrow 0$, $K_{9be} \rightarrow
0$. As for $K_{9bc}$, we deduce from \eqref{var conv 9} that for
a.e. fixed $t \in [0, t]$, it holds
\[ \psi(t)\int_{U}\phi(x)\nabla{Q}\odot\nabla{Q} : \big(\nabla\mathcal{A}(\rho_\varepsilon)-\nabla\mathcal{A}(\rho) \big)\,dx
\rightarrow 0, \ \ \mbox{as } \varepsilon \rightarrow 0. \]
Meanwhile, since $\beta > 4$, using Holder's inequality, we obtain
from \eqref{Mikhlin Multiplier theorem} and Lemma \ref{lemma on
density estimate without varepsilon} that $\forall \varepsilon > 0,
\forall t \in [0, T]$, \bea &&\Big|
\psi(t)\int_{U}\phi(x)\nabla{Q}\odot\nabla{Q} :
\big(\nabla\mathcal{A}(\rho_\varepsilon)-\nabla\mathcal{A}(\rho)
\big)\,dx \Big| \non\\
&\leq&C\|\nabla{Q}\|_{L^{\frac{10}{3}}(U)}^2\|\nabla\mathcal{A}(\rho_\varepsilon)-\nabla\mathcal{A}(\rho)\|_{L^{\frac52}(U)}  \non\\
&\leq& C(E_\delta(\rho_0, q_0, Q_0), a,b,c,\beta, L, U,
T)\|\nabla{Q}\|_{L^{\frac{10}{3}}(U)}^2, \non \eea with the right
hand side term being integrable on $(0, T)$ due to \eqref{L1 est
10}. Hence we conclude that $K_{9bc} \rightarrow 0$ after applying
Lebesgue's convergence theorem. In all, we prove
\[ I_9 \rightarrow J_9 \ \ \mbox{as } \varepsilon \rightarrow 0. \]

\noindent For $I_{10}$, we have \bea
I_{10}&=&L\int_0^T\psi\int_{U}\big(Q_\varepsilon\mathcal{H}(Q_\varepsilon)-\mathcal{H}(Q_\varepsilon){Q}_\varepsilon\big)
: \mathcal{A}(\rho_\varepsilon)\otimes\nabla\phi \,dxdt
\non\\
&&+L\int_0^T\psi\int_{U}\big(Q_\varepsilon\mathcal{H}(Q_\varepsilon)-\mathcal{H}(Q_\varepsilon){Q}_\varepsilon\big)
: \phi\nabla\mathcal{A}(\rho_\varepsilon) \,dxdt  \non\\
&\doteq&I_{10a}+I_{10b}. \label{hard term 2} \eea Notice that
$Q_\varepsilon=Q_\varepsilon^{T}$, hence
$Q_\varepsilon\mathcal{H}(Q_\varepsilon)-\mathcal{H}(Q_\varepsilon)Q_\varepsilon$
is skew-symmetric. And it is observed that $\nabla\mathcal{A}$ is
symmetric. Therefore, we conclude \be I_{10b}=0. \label{higher order
term } \ee
 \br We want to point out that
the special property of $Q$-tensor is of great importance here, for
otherwise we are not able to control the higher order terms in
$I_{10b}$. \er We proceed to show the convergence of
$I_{10}$ to $J_{10}$. \bea I_{10}-J_{10}&=&I_{10a}-J_{10} \non\\
&=&L\int_0^T\psi\int_{U}\big(Q_\varepsilon\Delta{Q}_\varepsilon-\Delta{Q}_\varepsilon{Q}_\varepsilon\big)
:
\big(\mathcal{A}(\rho_\varepsilon)-\mathcal{A}(\rho)\big)\otimes\nabla\phi
\,dxdt \non\\
&&+L\int_0^T\psi\int_{U}\big((Q_\varepsilon-Q)\Delta{Q}_\varepsilon-\Delta{Q}_\varepsilon({Q}_\varepsilon-Q)\big)
: \mathcal{A}(\rho)\otimes\nabla\phi \,dxdt \non\\
&&+L\int_0^T\psi\int_{U}\big(Q_\varepsilon(\Delta{Q}_\varepsilon-\Delta{Q})-(\Delta{Q}_\varepsilon-\Delta{Q}){Q}_\varepsilon\big)
: \mathcal{A}(\rho)\otimes\nabla\phi \,dxdt \non\\
&\doteq&K_{10aa}+K_{10ab}+K_{10ac}. \non \eea By \eqref{L1 est 7},
\eqref{L1 est 8}, \eqref{L1 est 9}, \eqref{var conv 3} and
\eqref{var conv 8}, it is easy to see that
\[ K_{10aa} \rightarrow 0, \ \ K_{10ab} \rightarrow 0, \ \ K_{10ac}\rightarrow 0, \ \ \mbox{as } \ \varepsilon \rightarrow 0, \]
hence
\[ I_{10}\rightarrow J_{10}, \ \ \mbox{as } \ \varepsilon \rightarrow 0. \]
Summing up all the above convergence results, we finish the proof of
Lemma \ref{lemma on viscous flux}.

\end{proof}
\subsection{Strong convergence of density}
In this subsection we shall show that
\[p=\rho^\gamma+\delta\rho^\beta, \]
and consequently the strong convergence of $\rho_\varepsilon$ in
$L^1((0, T)\times U)$. By Lemma \ref{lemma on 0 extension}, we can
take the standard mollifier $\vartheta_m=\vartheta_m(x)$ to equation
\eqref{density equ viscosity}, such that \be
\partial_tS_m(\rho)+\mbox{div}\,(S_m(\rho)u)=r_m, \ \ \mbox{on } (0, T)\times\mathbb{R}^3,
\label{density equ smoothing} \ee with $S_m(\rho)=\vartheta \ast
\rho$ and $r_m \rightarrow 0$ in $L^1((0, T)\times U)$ (c.f.
\cite{L98-1}). Then for any $g$ satisfying \eqref{property of
renormailized}, we can multiply \eqref{density equ smoothing} with
$g'(S_m(\rho))$ and pass to the limit as $m \rightarrow \infty$.
Then we may argue that (\cite{DL89}) $(\rho, u)$ solve
\eqref{density equ viscosity} in the sense of renormalized
solutions, namely, \eqref{renormalized form of density equ} holds in
$\mathcal{D}'((0, T)\times U)$. Instead of the strong restrictions
on $g$ in \eqref{property of renormailized}, one can use the
Lebesgue convergence theorem to relax the assumptions in Definition
\ref{def of finite energy weak solution} to any function $b \in
C^1(0, \infty) \cap C[0, \infty)$ with
\[ |g'(z)z| \leq C(z^\theta+z^{\frac{\gamma}{2}}), \ \
\forall \, z >0 \ \mbox{and some } 0< \theta<\frac{\gamma}{2}.   \]
Hence we may choose $g(z)=z\ln(z)$ and integrate \eqref{renormalized
form of density equ} to obtain
 \be  \int_0^T\int_{U}\rho\mbox{div}\,u\,dxdt=\int_{U}\rho_0\ln(\rho_0)dx-\int_{U}\rho(T)\ln(\rho(T))dx.
 \label{density inequality-1 renormalized} \ee
Meanwhile, using Lemma \ref{lemma on density regularity} and the
convexity of $g(z)=z\ln(z)$, we know \[
\partial_tg(\rho_\varepsilon)+\mbox{div}(g(\rho_\varepsilon)u_\varepsilon)
+\rho_\varepsilon\mbox{div}{u}_\varepsilon-\varepsilon\Delta{g}(\rho_\varepsilon)
\leq 0, \] which leads to
\be
\int_0^T\int_{U}\rho_\varepsilon\mbox{div}\,u_\varepsilon\,dxdt=\int_{U}\rho_0\ln(\rho_0)dx-\int_{U}\rho_\varepsilon(T)\ln(\rho_\varepsilon(T))dx.
\label{density inequality-2 renormalized} \ee Taking two
nondecreasing sequences $\phi_n \in \mathcal{D}(0, T)$, $\phi_n \in
\mathcal{D}(U)$ of nonnegative functions with $\psi_n \rightarrow 1,
\phi_n \rightarrow 1$ as $n \rightarrow \infty$. By Lemma \ref{lemma
on viscous flux}, \eqref{density inequality-1 renormalized} and
\eqref{density inequality-2 renormalized}, one can apply standard
arguments to show that
\[ \displaystyle\lim\sup_{\varepsilon \rightarrow 0^{+}}\int_0^{T}\psi_n\int_{U}\phi_n\rho_\varepsilon^\gamma
+\delta\rho_\varepsilon^\beta)\rho_\epsilon\,dxdt \leq
\int_0^T\int_{U}P\rho\,dxdt, \ \ \text{ for all }\, n=1,2,\cdots
\]
Notice that $P(z)=z^\gamma+\delta{z}^\beta$ is monotone, by Minty's
trick, we have
\[ \int_0^{T}\psi_m(t)\int_{U}\phi_m(x)\big(P(\rho_\varepsilon)-P(v)\big)(\rho_\epsilon-v)dxdt \geq 0. \]
Consequently, taking $n\rightarrow \infty$, we obtain after
rearrangement that for any $v=\rho+\kappa\phi$, $\phi \in
\mathcal{D}(U)$, it holds
\[ \int_0^{T}\int_{U}\big(p-P(v)\big)(\rho-v)dxdt \geq 0.  \]
Let $\kappa \rightarrow 0$, we come to the conclusion
\[ p=\rho^\gamma+\delta\rho^\beta. \]
In all, we may summarize the above results in the following
proposition.
\begin{proposition} \label{proposition on level 2 existence}
Suppose $\beta > \max\{\frac{6\gamma}{2\gamma-3}, \gamma, 4\}$. Then
for any given $T>0$ and $\delta>0$, there exists a finite energy
weak solution $(\rho, u, Q)$ to the problem \bea
\rho_t+\mbox{div}(\rho{u})&=&0, \label{density equ viscosity-2}\\
(\rho{u})_t+\nabla\cdot(\rho{u}\otimes{u})+\nabla({\rho^\gamma+\delta\rho^\beta})&=&\mathcal{L}u-\nabla\cdot\big(L\nabla{Q}\odot\nabla{Q}
-\mathcal{F}(Q)I_3\big)\non\\
&&+L\nabla\cdot\big(Q\mathcal{H}(Q)-\mathcal{H}(Q)Q \big), \label{navier-stokes viscosity-2}\\
Q_t+u\cdot\nabla{Q}-\Omega{Q}+Q\Omega&=&\Gamma\mathcal{H}(Q),
\label{order parameter viscosity-2}
 \eea
with initial and boundary conditions \eqref{IC 1 Gakerkin}-\eqref{BC
2 Galerkin}. Furthermore, $\rho \in L^{\beta+1}((0, T)\times U)$ and
the equation \eqref{density equ viscosity-2} is satisfied in the
sense of renormalized solutions on $\mathcal{D}'((0,
T)\times\mathbb{R}^3)$ provided $\rho, u$ are extended to be zero on
$\mathbb{R}^3 \setminus U$. In addition, the following estimates are
valid: \bea \displaystyle\sup_{t\in [0,
T]}\|\rho(t)\|_{L^\gamma(U)}^\gamma &\leq& C(E_\delta(\rho_0, q_0, Q_0), \gamma),  \label{L2 est 1}\\
\delta\displaystyle\sup_{t\in [0, T]}\|\rho(t)\|_{L^\beta(U)}^\beta
&\leq& C(E_\delta(\rho_0, q_0, Q_0), \beta),  \label{L2 est 2}\\
\displaystyle\sup_{t\in [0,
T]}\big\|\sqrt{\rho}(t)u(t)\big\|_{L^2(U)}^2 &\leq& 2E_\delta(\rho_0, q_0, Q_0),  \label{L2 est 3} \\
\|u\|_{L^2(0, T; H^1_0(U))} &\leq& C(E_\delta(\rho_0, q_0, Q_0), \lambda, \nu), \label{L2 est 4}  \\
\|Q\|_{L^{10}((0, T)\times U)} &\leq& C(E_\delta(\rho_0, q_0, Q_0), a, b, c, L, \Gamma, U, T), \label{L2 est 6} \\
\|Q\|_{L^{\infty}(0, T; H^1(U))} &\leq&
\frac{2}{L}E_\delta(\rho_0,q_0,Q_0)
\label{L2 est 7} \\
\|\nabla{Q}\|_{L^\frac{10}{3}((0,T)\times U)} &\leq& C(E_\delta(\rho_0, q_0, Q_0), a, b, c, L, \Gamma, U, T)      \\
 \|Q\|_{L^2(0, T; H^2(U))} &\leq&  C(E_\delta(\rho_0, q_0,
Q_0), a, b, c, L, \Gamma, U, T). \label{L2 est 8}
 \eea

\end{proposition}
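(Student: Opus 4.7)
The proposition is the assembly statement for the vanishing-viscosity limit $\varepsilon\to 0^+$ of the Galerkin solutions $(\rho_\varepsilon,u_\varepsilon,Q_\varepsilon)$ produced in Proposition \ref{proposition on level 1 existence}. The plan is to (i) collect uniform-in-$\varepsilon$ bounds, (ii) extract a weak limit that solves the system with a weak pressure $p$, (iii) prove the effective viscous flux identity, and (iv) identify $p=\rho^\gamma+\delta\rho^\beta$ via the renormalized continuity equation together with Minty's monotonicity trick, which at the same time upgrades weak to strong compactness of $\{\rho_\varepsilon\}$.

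For (i), the energy inequality \eqref{basic energy law 2} combined with Lemma \ref{lemma on lower bound of total energy} immediately gives the $\varepsilon$-uniform estimates on $\rho_\varepsilon$ in $L^\infty_t L^\gamma\cap L^\infty_t L^\beta$, on $u_\varepsilon$ in $L^2_tH^1_0$, and on $Q_\varepsilon$ in $L^\infty_tH^1\cap L^2_tH^2\cap L^{10}_{t,x}$ with $\nabla Q_\varepsilon\in L^{10/3}_{t,x}$, exactly as in Lemma \ref{lemma on estimate for level 1 appro} (minus the $\varepsilon$-dependent estimates). The extra ingredient is the $\varepsilon$-independent bound $\|\rho_\varepsilon\|_{L^{\beta+1}((0,T)\times U)}\le C$ supplied by Lemma \ref{lemma on density estimate without varepsilon}, obtained by testing the momentum equation against $\psi(t)\mathcal{B}(\rho_\varepsilon-m_0)$ with the Bogovskii operator $\mathcal{B}$; the new $Q$-terms there are controlled using $\nabla Q_\varepsilon\in L^{10/3}_{t,x}$ and $\|Q_\varepsilon\|_{L^{10}}$, which requires $\beta>4$ to embed $\mathcal{B}(\rho_\varepsilon-m_0)$ into a Sobolev space of sufficient exponent. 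For (ii), standard weak-compactness arguments, Aubin--Lions compactness for $Q_\varepsilon$ (using $\partial_tQ_\varepsilon\in L^2_tL^{3/2}_x$), and the compact embedding $L^{2\gamma/(\gamma+1)}\hookrightarrow H^{-1}$ (valid since $\gamma>3/2$) produce the convergences \eqref{var conv 1}-\eqref{var conv 8b}, yielding a limit $(\rho,u,Q)$ that satisfies \eqref{density equ viscosity}-\eqref{order parameter viscosity} with the weak pressure limit $\rho_\varepsilon^\gamma+\delta\rho_\varepsilon^\beta\to p$ in $L^{(\beta+1)/\beta}$.

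For (iii), apply Lemma \ref{lemma on viscous flux}: test the $\varepsilon$-momentum equation against $\psi(t)\phi(x)\mathcal{A}(\rho_\varepsilon)$ with $\mathcal{A}_j=\partial_{x_j}\Delta^{-1}$, extended by zero outside $U$, and do the analogous manipulation on the limit equation using Lemma \ref{lemma on 0 extension}, then subtract. Routine terms pass to the limit using \eqref{var conv 5}-\eqref{var conv 8b}, the Div--Curl lemma for the $\rho_\varepsilon\nabla_j\mathcal{A}_i(\rho_\varepsilon u_\varepsilon^j)-\rho_\varepsilon u_\varepsilon^j\nabla_j\mathcal{A}_i(\rho_\varepsilon)$ combination, and the $\varepsilon$-vanishing of the artificial viscosity terms. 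The main obstacle is the coupling through $Q$: the terms $I_9,I_{10}$ in the viscous flux identity. The dangerous one is $I_{10b}$, where the matrix $Q_\varepsilon\mathcal{H}(Q_\varepsilon)-\mathcal{H}(Q_\varepsilon)Q_\varepsilon$ is contracted with $\phi\,\nabla\mathcal{A}(\rho_\varepsilon)$; here the symmetry $Q_\varepsilon^T=Q_\varepsilon$ (preserved by Lemma \ref{lemma on solvability of Q}) makes the commutator skew-symmetric while $\nabla\mathcal{A}(\rho_\varepsilon)$ is symmetric, so the integrand vanishes pointwise. This is precisely where the $Q$-tensor structure is essential and where I expect the real work to lie. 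The remaining $I_9$ piece splits into a $\nabla Q_\varepsilon\odot\nabla Q_\varepsilon$ contribution handled by the strong convergence $Q_\varepsilon\to Q$ in $L^2_tH^1_x$ together with the uniform $L^{10/3}$ bound on $\nabla Q_\varepsilon$, and an $\mathcal{F}(Q_\varepsilon)$ contribution handled by $Q_\varepsilon\to Q$ in $L^{10}_{t,x}$ together with the strong convergence $\mathcal{A}(\rho_\varepsilon)\to\mathcal{A}(\rho)$ in $C(\overline{(0,T)\times U})$ coming from the Calder\'on--Zygmund bound \eqref{Mikhlin Multiplier theorem}.

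For (iv), use Lemma \ref{lemma on 0 extension} and the DiPerna--Lions regularization to show that both $(\rho,u)$ and $(\rho_\varepsilon,u_\varepsilon)$ satisfy renormalized continuity equations. Choose $g(z)=z\ln z$ and integrate: comparing the identity for the limit with the inequality for $(\rho_\varepsilon,u_\varepsilon)$ coming from the convexity of $g$ and the sign of $-\varepsilon\Delta g(\rho_\varepsilon)$ gives, together with the effective viscous flux identity from step (iii) (evaluated against test functions $\psi_n,\phi_n\nearrow 1$),
\[
\limsup_{\varepsilon\to 0^+}\int_0^T\!\!\int_U(\rho_\varepsilon^\gamma+\delta\rho_\varepsilon^\beta)\rho_\varepsilon\,dxdt \le \int_0^T\!\!\int_U p\,\rho\,dxdt.
\]
Applying Minty's trick to the monotone function $z\mapsto z^\gamma+\delta z^\beta$ yields $p=\rho^\gamma+\delta\rho^\beta$ a.e., which promotes the weak convergence of $\rho_\varepsilon$ to strong convergence in $L^{\beta+1}((0,T)\times U)$. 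The estimates \eqref{L2 est 1}-\eqref{L2 est 8} then follow by weak-$\ast$ lower semicontinuity from the corresponding bounds in Lemma \ref{lemma on estimate for level 1 appro}, and the renormalized property of the limit continuity equation has already been established en route.
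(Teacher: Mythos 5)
Your proposal follows essentially the same route as the paper's Section 4: the Bogovskii-operator test function for the $\varepsilon$-independent $L^{\beta+1}$ pressure bound, the effective viscous flux identity via $\mathcal{A}_j=\partial_{x_j}\Delta^{-1}$ with the crucial cancellation $I_{10b}=0$ from the skew-symmetry of $Q\mathcal{H}(Q)-\mathcal{H}(Q)Q$ against the symmetric $\nabla\mathcal{A}(\rho_\varepsilon)$, and the identification $p=\rho^\gamma+\delta\rho^\beta$ via the renormalized continuity equation with $g(z)=z\ln z$ and Minty's trick. The plan is correct and complete in its essentials.
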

\br The initial conditions \eqref{IC 1 Gakerkin}-\eqref{IC 2
Galerkin} are satisfied in the weak sense, since we infer from
\eqref{var conv 4} and \eqref{var conv 6} that
\[  \rho_\varepsilon \rightarrow
\rho \ \mbox{in } C(0, T; L_{weak}^\beta(U)), \ \
\rho_\varepsilon{u}_\varepsilon \rightarrow \rho{u} \ \mbox{in }
C\big([0, T]; L_{weak}^{\frac{2\gamma}{\gamma+1}}(U)\big).  \]

\er

\section{Vanishing artificial pressure}\setcounter{equation}{0}
In this section, we denote by $(\rho_\delta, u_\delta, Q_\delta)$
the corresponding approximate solutions constructed in Proposition
\ref{proposition on level 2 existence}. We are going to finish the
third level approximation, namely, we shall provide the convergence
of solutions of $(\rho_\delta, u_\delta, Q_\delta)$ to the solution
of the original problem \eqref{density equ}-\eqref{order parameter}
as $\delta$ goes to $0$.

To begin with, we relax the conditions on the general initial data
$(\rho_0, u_0, Q_0)$. It is easy to find a sequence $\rho_{\delta}
\in C_0^3(\bar{U})$ with the property \[ 0 \leq \rho_{\delta}(x)
\leq \frac12\delta^{-\frac{1}{\beta}}, \ \ \mbox{and } \
\|\rho_{\delta}-\rho_0\|_{L^2(U)} < \delta.
\]Taking $\rho_{0, \delta}=\rho_\delta+\delta$, due to \eqref{IC 1 Gakerkin}, then we have
\be 0 < \delta \leq \rho_{0, \delta} \leq \delta^{-\frac{1}{\beta}},
\ \ \frac{\partial\rho_{0, \delta}}{\partial\vec{n}}=0,
\label{modified BC 1} \ee with \be \rho_{0, \delta} \rightarrow
\rho_0 \ \mbox{ in} \ L^\gamma(U) \ \ \mbox{as} \ \delta \rightarrow
0. \label{delta cond1} \ee Set \bea \tilde{q}_{\delta}(x)=\left\{
\begin{array}{l}
q(x)\sqrt{\frac{\rho_{0, \delta}}{\rho_0}}, \ \ \mbox{if } \rho_0(x)>0,\\
0, \ \ \ \ \ \ \ \ \ \ \ \ \ \mbox{if } \rho_0(x)=0.
\end{array}\right.
\eea Then it follows from \eqref{compatibility condition} that
$\frac{|\tilde{q}_\delta|^2}{\rho_{0, \delta}}$ is uniformly bounded
in $L^1(U)$. At the same time, it is easy to find $h_\delta \in
C^2(\bar{U})$ such that
\[ \Big\|\frac{\tilde{q}_\delta}{\sqrt{\rho_{0, \delta}}}-h_\delta \Big\|_{L^2(U)} < \delta. \]
Consequently, we choose $q_\delta=h_\delta\sqrt{\rho_{0, \delta}}$
and one can readily check that \be \frac{|q_\delta|^2}{\rho_{0,
\delta}} \ \mbox{are uniformly bounded in } L^1(U), \ \
\label{modified BC 2} \ee and \be q_\delta \rightarrow q \ \mbox{in
} L^1(U) \ \mbox{ as } \delta \rightarrow 0. \label{delta cond2} \ee
In what follows, we shall deal with the sequence of approximate
solutions $(\rho_\delta, u_\delta, Q_\delta)$ to the problem
\eqref{density equ viscosity-2}-\eqref{order parameter viscosity-2}
with the initial data $(\rho_\delta, q_\delta, Q_0)$. \br We want to
point out that due to the above modifications, the estimates
\eqref{L2 est 1}-\eqref{L2 est 8} are independent of $\delta$
because the constant $E_\delta(\rho_{0, \delta}, q_{0,\delta}, Q_0)$
defined in \eqref{delta energy bound} is independent of $\delta$.
\er

Now we shall develop some pressure estimates independent of $\delta
> 0$.
Notice that the continuity equation \eqref{density equ viscosity-2}
is satisfied in the sense of renormalized solutions in
$\mathcal{D}'((0, T)\times \mathbb{R}^3)$, hence we may apply the
standard mollifying operator to both sides of \eqref{renormalized
form of density equ} and get \be
\partial_tS_m[g(\rho)]+\mbox{div}\big(S_m[g(\rho)u]\big)+S_m\big[(g'(\rho)\rho-g(\rho))\mbox{div}u \big]
=r_m, \ee with
\[ r_m \rightarrow 0 \ \ \mbox{in } L^2(0, T; L^2(\mathbb{R}^3)) \ \mbox{as } m \rightarrow \infty. \]
Using the operator $\mathcal{B}$ introduced in the proof of Lemma
\ref{lemma on density estimate without varepsilon}, we take the test
function to \eqref{navier-stokes viscosity-2} to be
\[ \phi_i(t, x)=\psi(t)\mathcal{B}_i\Big\{S_m[g(\rho_\delta)]-\frac{1}{|U|}\int_{U}S_m[g(\rho_\delta)]dx   \Big\},
\ \ i=1,2,3, \ \psi \in \mathcal{D}(0, T). \] Next, we can
approximate the function $g(z)$ by a sequence of function
$\{z^\theta\chi_n(z)\}$, where each $\chi_n(z)$ being a cutoff
function such that $\chi_n(z)=1$ on $[0, n]$ and $\chi_n(z)=0$ on
$z>2n$. Then using all the estimates \eqref{L2 est 1}-\eqref{L2 est
8}, we have
\begin{lemma} \label{lemma on higher density integrability}
For $\gamma > \frac{3}{2}$, there exists a constant $\theta$ that
only depends on $\gamma$, such that
\[  \int_0^T\int_{U}\left(\rho_\delta^{\gamma+\theta}+\delta\rho_\delta^{\beta+\theta}\right)\,dxdt \leq
C(\rho_0, q_0, Q_0, a, b, c, \lambda,\nu,\gamma, \beta, \Gamma, L,
U, T),
\] provided $0<\theta < \min\big\{1, \frac{\gamma}{3},
\frac{2\gamma}{3}-1 \big\}$
\end{lemma}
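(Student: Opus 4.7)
The strategy is to adapt the Bogovskii-type argument of Feireisl--Novotn\'y--Petzeltov\'a (used for the pure compressible Navier--Stokes equations) to our coupled system, the new work being to absorb the additional terms coming from the $Q$-tensor. I would test the momentum equation \eqref{navier-stokes viscosity-2} against
\[
\phi(t,x)=\psi(t)\,\mathcal{B}\!\left[\,S_m[g_n(\rho_\delta)]-\frac{1}{|U|}\int_U S_m[g_n(\rho_\delta)]\,dx\,\right],\quad\psi\in\mathcal{D}(0,T),
\]
where $g_n(z)=z^\theta\chi_n(z)$ truncates $z^\theta$ by a smooth cutoff $\chi_n$ equal to $1$ on $[0,n]$ and vanishing outside $[0,2n]$, so that $g_n$ is admissible in the sense of \eqref{property of renormailized}. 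The renormalized continuity equation \eqref{renormalized form of density equ}, valid in $\mathcal{D}'((0,T)\times\mathbb{R}^3)$ by Proposition \ref{proposition on level 2 existence}, then supplies a formula for $\partial_t S_m[g_n(\rho_\delta)]$ with a remainder vanishing as $m\to\infty$.

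After integration by parts, the principal term on the left-hand side reads
\[
\int_0^T\psi\int_U\bigl(\rho_\delta^\gamma+\delta\rho_\delta^\beta\bigr)\,S_m[g_n(\rho_\delta)]\,dx\,dt,
\]
which, by monotone convergence together with a sequential approximation $\psi\nearrow \mathbf{1}_{(0,T)}$, converges as $m,n\to\infty$ to $\int_0^T\!\int_U\bigl(\rho_\delta^{\gamma+\theta}+\delta\rho_\delta^{\beta+\theta}\bigr)\,dx\,dt$. All the remaining terms must be bounded uniformly in $\delta$ using only the estimates \eqref{L2 est 1}--\eqref{L2 est 8}, the Bogovskii bound $\|\mathcal{B}[f]\|_{W^{1,p}_0(U)}\le C(p,U)\|f\|_{L^p(U)}$ for $1<p<\infty$, and Sobolev embedding.

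For the terms coming from the Navier--Stokes block ($\rho_\delta u_\delta\cdot\partial_t\phi$, $\rho_\delta u_\delta\otimes u_\delta:\nabla\phi$, $\mathcal{L}u_\delta\cdot\phi$, and the mean-value correction) the analysis parallels \cite{FNP01}, and the three restrictions on $\theta$ originate entirely here. The bound $\theta<1$ is needed so that the renormalization factor $g_n'(\rho_\delta)\rho_\delta-g_n(\rho_\delta)\sim(\theta-1)\rho_\delta^\theta$ integrates against $\mathrm{div}\,u_\delta\in L^2$. The bound $\theta<\tfrac{2\gamma}{3}-1$ arises from pairing the convective term $\rho_\delta u_\delta\otimes u_\delta\in L^2\bigl(0,T;L^{6\gamma/(4\gamma+3)}(U)\bigr)$ with $\nabla\phi$ via H\"older and the Bogovskii bound. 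Finally $\theta<\tfrac{\gamma}{3}$ appears when $\rho_\delta u_\delta\cdot\partial_t\phi$ is rewritten via the renormalized continuity equation, so that Bogovskii is applied to $\mathrm{div}(g_n(\rho_\delta)u_\delta)$ and the resulting double product $\rho_\delta u_\delta\,\mathcal{B}[g_n(\rho_\delta)u_\delta]$ is uniformly integrable.

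The genuinely new step, and the one I expect to be the main obstacle, is the control of the $Q$-tensor contributions
\[
\int_0^T\!\!\int_U\!\bigl(L\nabla Q_\delta\odot\nabla Q_\delta-\mathcal{F}(Q_\delta)I_3\bigr):\nabla\phi\,dx\,dt\;+\;L\!\int_0^T\!\!\int_U\!\bigl(Q_\delta\mathcal{H}(Q_\delta)-\mathcal{H}(Q_\delta)Q_\delta\bigr):\nabla\phi\,dx\,dt.
\]
For these I would exploit the full strength of Corollary \ref{remarkofQ} and the $\delta$-independent bounds \eqref{L2 est 6}--\eqref{L2 est 8}: $\nabla Q_\delta\in L^{10/3}$, $Q_\delta\in L^{10}\cap L^\infty_tH^1_x$, and $\Delta Q_\delta\in L^2_{t,x}$. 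The quadratic gradient term is handled by H\"older with exponents $(10/3,10/3,5/2)$ combined with $\|\nabla\mathcal{B}[g_n(\rho_\delta)]\|_{L^{5/2}_x}\le C\|\rho_\delta^\theta\|_{L^{5/2}_x}$, which is bounded since $\rho_\delta^\theta\in L^\infty_t L^{\gamma/\theta}_x$ and $\gamma/\theta>5/2$ whenever $\theta<\gamma/3$. The polynomial energy $\mathcal{F}(Q_\delta)$ is paired analogously using the $L^{10}$-bound on $Q_\delta$. The commutator $Q_\delta\mathcal{H}(Q_\delta)-\mathcal{H}(Q_\delta)Q_\delta$ is the most delicate; its leading part is $L(Q_\delta\Delta Q_\delta-\Delta Q_\delta Q_\delta)$, estimated by $\|Q_\delta\|_{L^{10}_x}\|\Delta Q_\delta\|_{L^2_x}\|\nabla\phi\|_{L^{5/2}_x}$, which is integrable in $t$ since $\|\Delta Q_\delta\|_{L^2_x}\in L^2_t$. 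The key feature is that all $L^p$-norms of $\nabla\phi$ that appear are subcritical with respect to $\rho_\delta^\theta$, so these $Q$-tensor terms impose no new restriction on $\theta$ beyond the three already listed. Collecting all the bounds and sending $n,m\to\infty$ yields the claimed uniform estimate.
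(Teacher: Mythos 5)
Your proposal follows exactly the route the paper takes: testing \eqref{navier-stokes viscosity-2} against $\psi(t)\mathcal{B}\big[S_m[g(\rho_\delta)]-\tfrac{1}{|U|}\int_U S_m[g(\rho_\delta)]\,dx\big]$ with $g$ approximated by $z^\theta\chi_n(z)$, and controlling the $Q$-tensor terms by the same H\"older pairings ($\nabla Q_\delta\in L^{10/3}$, $Q_\delta\in L^{10}$, $\Delta Q_\delta\in L^2$ against $\nabla\mathcal{B}\in L^{5/2}$) used for $I_9$ and $I_{10}$ in Lemma \ref{lemma on density estimate without varepsilon}. The paper explicitly skips these details, and your sketch supplies them correctly, including the observation that the $\delta$-independent replacement for the $L^\beta$ bound on the density is $\rho_\delta^\theta\in L^\infty_tL^{\gamma/\theta}_x$ with $\gamma/\theta>5/2$ under $\theta<\gamma/3$.
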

\begin{proof}
Since the technique is quite similar to Lemma \ref{lemma on density
estimate without varepsilon}, we shall skip the details of proof and
leave it to interested readers. It is noted that the right hand side
bound is independent of $\delta$.
\end{proof}
\subsection{The limit passage and the effective viscous flux}
We conclude from the uniform estimates \eqref{L2 est 1}-\eqref{L2
est 8} in Proposition \ref{proposition on level 2 existence} and
Lemma \ref{lemma on higher density integrability} that \bea
&&\rho_\delta \rightarrow \rho \ \mbox{ in} \ C\big([0, T];
L^\gamma_{weak}(U)\big), \\
&&\rho_\delta \rightarrow \overline{\rho^\gamma} \ \mbox{ weakly in} \ L^{\frac{\gamma+\theta}{\gamma}}((0, T)\times U),     \\
&&u_\delta \rightarrow u \ \mbox{ weakly in} \ L^2(0, T;
H_0^1(U)), \\
&&\rho_\delta{u}_\delta \rightarrow \rho{u} \ \mbox{ in} \ C\big([0,
T];
L^{\frac{2\gamma}{\gamma+1}}_{weak}(U)\big), \\
&& Q_\delta \rightarrow Q \ \mbox{ weakly in} \ L^2(0, T; H^2(U)),\\
&& Q_\delta \rightarrow Q \ \mbox{ strongly in} \ L^2(0, T; H^1(U)),
\eea which infers \bea \rho_\delta{u}_\delta \otimes {u}_\delta
\rightarrow \rho{u}\otimes{u} \ \mbox{ in} \ \mathcal{D}'((0,
T)\times U), \eea and \bea
&&\nabla{Q_\delta}\odot\nabla{Q_\delta}-\mathcal{F}(Q_\delta)I_3-L\big(
Q_\delta\mathcal{H}(Q_\delta)-\mathcal{H}(Q_\delta)Q_\delta \big)
\non\\
&&\rightarrow \nabla{Q}\odot\nabla{Q}-\mathcal{F}(Q)I_3-L\big(
Q\mathcal{H}(Q)-\mathcal{H}(Q)Q \big) \ \  \mbox{in } \ L^1((0, T)
\times U). \eea Further, Lemma \ref{lemma on higher density
integrability} implies that \be \delta\rho_\delta^\beta \rightarrow
0 \ \mbox{ in} \ L^1((0, T)\times U). \ee Therefore, the limit
$(\rho, u, Q)$ satisfies \bea \rho_t+\mbox{div}(\rho{u})&=&0, \ \
\mbox{in } \ \mathcal{D}'\big((0, T)\times \mathbb{R}^3 \big),
\label{density equ delta}  \\
(\rho{u})_t+\nabla\cdot(\rho{u}\otimes{u})+\nabla\overline{\rho^\gamma}&=&\mathcal{L}u-\nabla\cdot\big(L\nabla{Q}\odot\nabla{Q}
-\mathcal{F}(Q)I_3\big)\non\\
&&+L\nabla\cdot\big(Q\mathcal{H}(Q)-\mathcal{H}(Q)Q \big), \label{navier-stokes delta}\\
Q_t+u\cdot\nabla{Q}-\Omega{Q}+Q\Omega&=&\Gamma\mathcal{H}(Q),
\label{order parameter delta} \eea in $\mathcal{D}'\big((0, T)\times
U \big)$. And the initial data \eqref{IC} is satisfied due to
\eqref{delta cond1} and \eqref{delta cond2}.

In what follows, our ultimate goal is to show
$\overline{\rho^\gamma}=\rho^\gamma$, or equivalently, the strong
convergence of $\rho_\delta$ in $L^1$. Consider a family of cut-off
functions by $T_k(z)=kT(\frac{z}{k})$ for $z\in \mathbb{R}$,
$k=1,2,3\cdots$ and $T \in C^\infty(R)$ is chosen to be
\[ T(z)=z \ \mbox{for} \ z \leq 1, \ \ T(z)=2 \ \mbox{for} \ z \geq 3, \ \ T \mbox{ is } concave.  \]
Since $(\rho_\delta, u_\delta)$ is a normalized solution to
\eqref{density equ delta}, it holds
 \be T_k(\rho_\delta)_t+\mbox{div}\big(T_k(\rho_\delta){u}_\delta \big)+\big(T_k'(\rho_\delta)-T_k(\rho_\delta)
 \big)\mbox{div}u_\delta=0,
 \ \ \mbox{in } \  \mathcal{D}'\big((0, T)\times \mathbb{R}^3 \big),   \ee
from which we get after passing to limit for $\delta \rightarrow 0$
that
 \be \overline{T_k(\rho)}_t+\mbox{div}\big(\overline{T_k(\rho)}{u} \big)
 +\overline{\big(T_k'(\rho)-T_k(\rho)
 \big)\mbox{div}u}=0,
 \ \ \mbox{in } \  \mathcal{D}'\big((0, T)\times \mathbb{R}^3 \big).
 \label{renormalized equ by concave fun}  \ee
Here \be \big(T_k'(\rho_\delta)-T_k(\rho_\delta)
 \big)\mbox{div}u_\delta \rightarrow \overline{\big(T_k'(\rho)-T_k(\rho)
 \big)\mbox{div}u} \ \ \ \mbox{weakly in } \ L^2((0, T)\times U),   \ee
and \be T_k(\rho_\delta) \rightarrow \overline{T_k(\rho)} \ \
\mbox{in } \ C\big(0, T; L^p_{weak}(U)  \big), \ \ \forall \, 1 \leq
p < \infty. \ee By similar arguments as in the proof of Lemma
\ref{lemma on viscous flux}, we have the following auxiliary result:
\begin{lemma} \label{lemma on delta viscous flux}
Suppose $(\rho_\delta, u_\delta)$ is a sequence of approximate
solutions constructed in Proposition \ref{proposition on level 2
existence}, then for any $\psi \in \mathcal{D}(0, T)$, $\phi \in
\mathcal{D}(U)$, it holds \bea &&\displaystyle\lim_{\delta
\rightarrow
0^{+}}\int_0^T\psi(t)\int_{U}\phi(x)\big(\rho_{\delta}^\gamma
-(\lambda+2\nu)\mbox{div}\,u_{\delta}
\big)T_k(\rho_\delta)\,dxdt \non\\
&=&\int_0^T\psi(t)\int_{U}\phi(x)\big(\overline{\rho^\gamma}-(\lambda+2\nu)\mbox{div}\,u\big)\overline{T_k(\rho)}\,dxdt
\label{delta limit of viscous flux} \eea
\end{lemma}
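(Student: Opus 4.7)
The plan is to mimic the proof of Lemma 4.1 (effective viscous flux in the vanishing viscosity step), but now with the test function built from $T_k(\rho_\delta)$ rather than $\rho_\delta$ itself. Since the cut-off $T_k$ is bounded, many of the technical obstructions from before are actually easier to handle; the new ingredient is that $(\rho_\delta,u_\delta)$ is a \emph{renormalized} solution, so $T_k(\rho_\delta)$ satisfies
\[
\partial_t T_k(\rho_\delta) + \operatorname{div}\bigl(T_k(\rho_\delta)u_\delta\bigr)
+\bigl(T_k'(\rho_\delta)\rho_\delta-T_k(\rho_\delta)\bigr)\operatorname{div}u_\delta=0
\]
in $\mathcal{D}'((0,T)\times\mathbb{R}^3)$ after extending by zero, and similarly \eqref{renormalized equ by concave fun} holds for the limiting quantities. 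Denote $\mathcal{A}=\nabla\Delta^{-1}$ as in Lemma 4.1.

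First I would plug the vector-valued test function $\varphi(t,x)=\psi(t)\phi(x)\mathcal{A}\bigl(T_k(\rho_\delta)\bigr)$ into the momentum equation \eqref{navier-stokes viscosity-2} for $(\rho_\delta,u_\delta,Q_\delta)$, and the analogous test function $\psi(t)\phi(x)\mathcal{A}\bigl(\overline{T_k(\rho)}\bigr)$ into the limiting equation \eqref{navier-stokes delta}. This yields two identities of the same type as \eqref{test fun viscosity-1}--\eqref{test fun viscosity-2}, with twelve/ten terms respectively. The time-derivative of $T_k(\rho_\delta)$ inside $\mathcal{A}$ is handled by the renormalized continuity equation above: it produces a commutator term of the form
\[
\rho_\delta\nabla_j\mathcal{A}_i\bigl(T_k(\rho_\delta)u_\delta^j\bigr)
-T_k(\rho_\delta)u_\delta^j\,\nabla_j\mathcal{A}_i(\rho_\delta)
\]
plus a lower order piece involving $(T_k'(\rho_\delta)\rho_\delta-T_k(\rho_\delta))\operatorname{div}u_\delta$, and the analogous expression in the limit.

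Next I would pass to the limit $\delta\to 0^{+}$ term by term. Thanks to the uniform bound $\|T_k(\rho_\delta)\|_{L^\infty}\le 2k$, Mikhlin's multiplier bound $\|\mathcal{A}(T_k(\rho_\delta))\|_{H^{1,s}}\le C\|T_k(\rho_\delta)\|_{L^s}$ gives strong convergence $\mathcal{A}(T_k(\rho_\delta))\to\mathcal{A}(\overline{T_k(\rho)})$ in $C(\overline{(0,T)\times U})$, and $\nabla\mathcal{A}(T_k(\rho_\delta))\to\nabla\mathcal{A}(\overline{T_k(\rho)})$ in $C([0,T];L^p_{\mathrm{weak}}(U))$ for every $p<\infty$. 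Combined with the convergences listed before the lemma, the routine terms (convective, viscous, pressure-without-$T_k$, lower-order $Q$-contributions involving $\nabla Q\odot\nabla Q$ and $\mathcal{F}(Q)I_3$) converge just as in Lemma 4.1; the commutator term converges via the Div--Curl lemma applied in $H^{-1}$, exactly as for $I_8$ there. The artificial pressure contribution $\delta\rho_\delta^\beta$ drops out in the limit by Lemma 5.1.

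The delicate step, as before, is the higher-order $Q$-stress
\[
L\int_0^T\!\!\psi\int_U \bigl(Q_\delta\mathcal{H}(Q_\delta)-\mathcal{H}(Q_\delta)Q_\delta\bigr):\phi\,\nabla\mathcal{A}\bigl(T_k(\rho_\delta)\bigr)\,dx\,dt,
\]
which contains the $L\Delta Q_\delta$ piece that is only $L^2$ in space-time. Here I would use exactly the symmetry/antisymmetry observation of Lemma 4.1: since $T_k(\rho_\delta)$ is a scalar, $\nabla\mathcal{A}(T_k(\rho_\delta))$ is the Hessian of a scalar function and hence \emph{symmetric}, while $Q_\delta\mathcal{H}(Q_\delta)-\mathcal{H}(Q_\delta)Q_\delta$ is \emph{skew-symmetric} thanks to $Q_\delta^T=Q_\delta$. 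Thus the whole term vanishes identically, and similarly for the limit. This is the main obstacle and is precisely where the symmetry/trace-free structure of the $Q$-tensor is essential; without it the $\Delta Q$ contribution cannot be controlled by the available bounds. Once this term is killed, subtracting the two identities and collecting everything yields \eqref{delta limit of viscous flux}, which completes the proof.
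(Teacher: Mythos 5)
Your proposal is correct and is essentially the paper's own argument: the paper proves this lemma only by reference ("by similar arguments as in the proof of Lemma \ref{lemma on viscous flux}"), and your plan — test with $\psi\phi\mathcal{A}(T_k(\rho_\delta))$, use the renormalized continuity equation for the time derivative, pass to the limit term by term with the Div--Curl commutator handling the convective part, and kill the $Q\mathcal{H}(Q)-\mathcal{H}(Q)Q$ contribution against the symmetric $\nabla\mathcal{A}$ of a scalar — is exactly that adaptation, including the one genuinely essential new point (the skew-symmetry cancellation surviving the replacement of $\rho_\delta$ by $T_k(\rho_\delta)$). The only cosmetic slip is in the displayed commutator, where the roles of $\rho_\delta$ and $T_k(\rho_\delta)$ should be paired consistently as $T_k(\rho_\delta)\nabla_j\mathcal{A}_i(\rho_\delta u_\delta^j)-\rho_\delta u_\delta^j\nabla_j\mathcal{A}_i(T_k(\rho_\delta))$ (contracted with $u_\delta^i$), but this does not affect the argument.
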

\subsection{The renormalized solutions and strong convergence of density}

As in \cite{FNP01}, we introduce a quantity namely oscillations
defect measure. To consider the weak convergence of the sequence
$\{\rho_\delta \}_{\delta>0}$ in $L^1((0, T)\times U)$, we define
\be \mathbf{osc}_{\gamma+1}[\rho_\delta-\rho] \equiv
\displaystyle\sup_{k\geq
1}\left(\displaystyle\lim\sup_{\delta\rightarrow
0}\int_0^T\int_{U}\big|T_k(\rho_\delta)-T_k(\rho)
\big|^{\gamma+1}\,dxdt \right), \ee where $T_k$ are the cut-off
functions defined above. First by virtue of Lemma \ref{lemma on
delta viscous flux}, we claim the following result concerning the
oscillation defect measure.

\bl \label{lemma on OSC} There exists a constant $C$ independent of
$k$, such that
\[ \mathbf{osc}_{\gamma+1}[\rho_\delta-\rho] \leq C.  \]
 \el
\begin{proof}
Notice that $z^\gamma$ is a convex function for  $\gamma > \frac32$,
we have (see Theorem 2.11 in \cite{F04})
\[ \rho^\gamma \leq \overline{\rho^\gamma}, \ \ \ z^\gamma-y^\gamma \geq (z-y)^\gamma, \
 \ \mbox{for } \ z\geq y \geq 0. \]
Meanwhile, since $T_k(z)$ is concave, we know
\[ |T_k(z)-T_k(y)| \leq |z-y|, \ \ \ T_k(\rho) \geq \overline{T_k(\rho)}, \ \ \forall k \geq 1,  \]
and henceforth
\[ |T_k(z)-T_k(y)|^{\gamma+1} \leq |z-y|^\gamma |T_k(z)-T_k(y)| \leq (z^\gamma-y^\gamma)\big(T_k(z)-T_k(y)\big).  \]
Consequently, it yields \bea
&&\displaystyle\lim\sup_{\delta\rightarrow
0}\int_0^T\int_{U}|T_k(\rho_\delta)-T_k(\rho)|^{\gamma+1}dxdt \non\\
&\leq&\displaystyle\lim_{\delta\rightarrow 0}\int_0^T\int_{U}
(\rho_\delta^\gamma-\rho^\gamma)\big(T_k(\rho_\delta)-T_k(\rho)\big)\,dxdt
+\int_0^T\int_{U} (\overline{\rho^\gamma}-\rho^\gamma)\big(T_k(\rho)-\overline{T_k(\rho)} \big)\,dxdt \non\\
&=&\displaystyle\lim_{\delta\rightarrow 0}\int_0^T\int_{U}
\rho_\delta^\gamma
T_k(\rho_\delta)-\overline{\rho^\gamma}\,\overline{T_k(\rho)}\,dxdt
\non\\
&=& \nu\displaystyle\lim_{\delta\rightarrow
0}\int_0^T\int_{U}\mbox{div}u_\delta T_k(\rho_\delta)-\mbox{div}u
T_k(\rho)\,dxdt  \non\\
&\leq& \nu\displaystyle\lim_{\delta\rightarrow 0}\int_0^T\int_{U}
\big(T_k(\rho_\delta)-T_k(\rho)+T_k(\rho)-\overline{T_k(\rho)}\,\big) \mbox{div}u_\delta \,dxdt  \non\\
&\leq&C \displaystyle\sup_{\delta>0}\|\mbox{div}u_\delta\|_{L^2((0,
T)\times U)}\displaystyle\lim\sup_{\delta\rightarrow
0}\|T_k(\rho_\delta)-T_k(\rho)\|_{L^{\gamma+1}((0, T)\times U)},
 \eea
where we applied Lemma \ref{lemma on delta viscous flux} in the
third step.

\end{proof}
Based on the uniform bound for oscillation defect measure shown in
Lemma \ref{lemma on OSC}, we can apply the same argument in
\cite{FNP01} to show that the limit functions $(\rho, u)$ satisfy
\eqref{density equ delta} in the sense of renormalized solutions.
\bl \label{lemma on revisited renor sol} The limit functions $(\rho,
u)$ satisfy equation \eqref{density equ delta} in the sense of
renormalized solutions, namely, \be
g(\rho)_t+\mbox{div}(g(\rho)u)+(g'(\rho)\rho-g(\rho))\mbox{div}u=0,
\ee holds in $\mathcal{D}\big((0, T)\times \mathbb{R}^3 \big)$ for
any $g$ satisfying \eqref{property of renormailized}.

\el

Finally, we shall discuss the propagation of oscillations, whose
amplitude in the sequence $\{\rho_\delta\}_{\delta>0}$ is measured
by the following quantity
\[ \mathbf{dft}[\rho_\delta\rightarrow \rho](t) \equiv \int_{U}\Big(\overline{\rho\ln(\rho)}-\rho\ln(\rho) \Big)(t, x)dx
, \ \ \ t \in [0, T]. \] To this end, we introduce the auxiliary
functions
\[ L_k(\rho) = \rho\int_1^\rho\frac{T_k(z)}{z^2}dz, \]
where $T_k$ are cutoff functions defined above. Now the equation \[
\partial_tL_k(\rho_\delta)+\mbox{div}\big(L_k(\rho_\delta)u_\delta
\big) +T_k(\rho_\delta)\mbox{div}\,u_\delta = 0
   \] holds in $\mathcal{D}'\big((0, T)\times
\mathbb{R}^3\big)$. Letting $\delta \rightarrow 0$ we obtain \be
\partial_t\overline{L_k(\rho)}+\mbox{div}\big(\overline{L_k(\rho)}u
\big)+\overline{T_k(\rho)\mbox{div}\,u} = 0.
 \label{den equ Lk limit} \ee Here $L_k(\rho) \in C([0, T]; L^1(U))$
and  \[ L_k(\rho_\delta) \rightarrow \overline{L_k(\rho)} \,\,
\mbox{in} \ C\big(0, T; L_{weak}^\gamma(U) \big), \ \
T_k(\rho_\delta)\mbox{div}\,u_\delta \rightarrow
\overline{T_k(\rho)\mbox{div}\,u} \ \mbox{weakly in} \ L^2((0,
T)\times U).
\] By Lemma \ref{lemma on revisited renor sol}, the limits $(\rho, u)$
satisfy \be
\partial_tL_k(\rho)+\mbox{div}(L_k(\rho)u)+T_k(\rho)\mbox{div}u = 0
\ \ \mbox{ in } \ \mathcal{D}'\big((0, T)\times \mathbb{R}^3\big).
 \label{den equ Lk}\ee
Taking the difference between \eqref{den equ Lk limit} and
\eqref{den equ Lk}, then taking the inner product of the resultant
with a test function $\psi(t)\phi(x)$, with $\psi \in \mathcal{D}(0,
T)$ and $\phi \in \mathcal{D}(\mathbb{R}^3)$ with $\phi \equiv 1$ on
an open neighborhood of $\bar{U}$, we get after integrating from $0$
to $t$ that \bea &&\int_{U}\big(\overline{L_k(\rho)}-L_k(\rho)
\big)(t)dx\non\\
&=&\int_0^{t}\int_{U}\big(\overline{T_k(\rho)}\mbox{div}\,u-\overline{T_k(\rho)\mbox{div}\,u}\big)\,dxdt
+\int_0^t\int_{U}\big(T_k(\rho)-\overline{T_k(\rho)}
\big)\mbox{div}\,u\,dxdt. \label{diff equ of Lk} \eea Notice that
$T_k(z)$ is a convex function of $z \geq 0$, by Lemma \ref{lemma on
delta viscous flux} again, we deduce from \eqref{diff equ of Lk}
that for all $t \in [0, T]$, \bea 0 &\leq&
\int_{U}\big(\overline{L_k(\rho)}-L_k(\rho)
\big)(t)dx \non\\
&=&\displaystyle\lim_{\delta\rightarrow
0^+}\frac{1}{\nu}\int_0^t\int_{U}\big(\rho_\delta^\gamma
T_k(\rho_\delta)-\overline{\rho^\gamma}\overline{T_k(\rho)}
\big)dxd\tau+\int_0^t\int_{U}\big(T_k(\rho)-\overline{T_k(\rho)}
\big)\mbox{div}\,u\,dxd\tau  \non\\
&\leq&\int_0^t\int_{U}\big(T_k(\rho)-\overline{T_k(\rho)}
\big)\mbox{div}\,u\,dxd\tau  \non\\
&\leq&\|\mbox{div}u\|_{L^2((0, T)\times
U)}\|\overline{T_k(\rho)}-T_k(\rho)\|_{L^1((0, T)\times
U)}^{\frac{\gamma-1}{2\gamma}}\|\overline{T_k(\rho)}-T_k(\rho)\|_{L^{\gamma+1}((0,
T)\times U)}^{\frac{\gamma+1}{2\gamma}} \non\\
&\doteq&I. \label{I} \eea By \eqref{L2 est 1}, \eqref{L2 est 4},
Lemma \ref{lemma on OSC}, letting $k \rightarrow \infty$ in
\eqref{I}, we get \bea 0
&\leq& \mathbf{dft}[\rho_\delta\rightarrow \rho](t) \leq I \non\\
&\leq& C\displaystyle\lim_{k \rightarrow
\infty}\|\overline{T_k(\rho)}-T_k(\rho)\|_{L^1((0, T)\times
U)}^{\frac{\gamma-1}{2\gamma}} \non\\
&\leq& C\displaystyle\lim_{k \rightarrow
\infty}\|\overline{T_k(\rho)}-\rho\|_{L^1((0, T)\times
U)}^{\frac{\gamma-1}{2\gamma}}+C\displaystyle\lim_{k \rightarrow
\infty}\|T_k(\rho)-\rho\|_{L^1((0, T)\times
U)}^{\frac{\gamma-1}{2\gamma}} \non\\
&\leq&C\displaystyle\lim_{k \rightarrow \infty} \lim_{\delta
\rightarrow 0^+} \|T_k(\rho_\delta)-\rho_\delta\|_{L^1((0, T)\times
U)}^{\frac{\gamma-1}{2\gamma}} \non\\
&\leq&C\displaystyle\lim_{k \rightarrow
\infty}2^{\frac{\gamma-1}{2\gamma}}k^{-\frac{(\gamma-1)^2}{2\gamma}}
\lim_{\delta\rightarrow 0^+}\|\rho_\delta\|_{L^\gamma((0,T)\times
U)}^{\frac{\gamma-1}{2}} \non\\
&=&0,
 \eea
which indicates
\[ \overline{\rho\ln\rho}(t)=\rho\ln\rho(t), \ \ \ \mbox{for all } \ t \in [0, T]. \]
Hence we manage to prove the strong convergence of $\rho_\delta
\rightarrow \rho$ in $L^1((0, T)\times U)$.
\section{Long time dynamics}\setcounter{equation}{0}
Finally, in this section we discuss briefly the long time behavior
of any finite energy global weak solution $(\rho, u, Q)$. The main
result is as follows.
\begin{theorem} \label{theorem on ld}
Suppose $\gamma > \frac32$, for any finite weak energy solution to
the problem \eqref{density equ}-\eqref{BC}, there exists a steady
state solution $(\rho_s, 0, Q_s)$, with \be \rho_s=\frac{m_0}{|U|},
\ \ \mathcal{H}(Q_s)=0 \ \mbox{for} \ x \in U, \
Q_s|_{\partial_U}=Q_0, \label{equ for steady sol} \ee where
$m_0=\int_{U}\rho_0\,dx,$
 such that \be
\rho(t) \rightarrow \rho_s \ \ \mbox{weakly in} \ L^\gamma(U) \
\mbox{as } t \rightarrow \infty, \ee and \be
\displaystyle\lim_{t\rightarrow \infty}\mathcal{E}(t)=\mathcal{E}_s,
\ee where $\mathcal{E}_s$ is defined in \eqref{def of steady
energy}.
 Furthermore, there exists
an increasing sequence $\{t_n \}$ tending to infinity, for $t \in
[0, 1]$, it holds as $n \rightarrow \infty$ \bea &&u(t+t_n)
\rightarrow 0 \ \ \mbox{weakly in} \ L^2(0,1; H^1(U)),
\\
&&Q(t_n) \rightarrow Q_s \ \ \mbox{strongly in} \ L^2(0,1; H^1(U)) \
\mbox{and} \ \mbox{weakly in} \ L^2(0,1; H^2(U)). \eea

\end{theorem}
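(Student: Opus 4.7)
The plan is to use the global energy dissipation to extract a time-shifted subsequence that relaxes to the stationary triple, then identify the limit by passing to the limit in the system.

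First I would integrate the energy inequality of Proposition~2.1 from $0$ to infinity. Combined with the lower bound for $\mathcal{E}(t)$ from Lemma~2.1 and the monotonicity of $\mathcal{E}$, this yields the global-in-time dissipation bound
\[
\int_0^{\infty}\!\int_U\Big(\nu|\nabla u|^2+(\nu+\lambda)|\operatorname{div} u|^2+\Gamma\, tr^2(\mathcal{H})\Big)dx\,dt<+\infty,
\]
and the existence of the finite limit $\mathcal{E}_\infty:=\lim_{t\to\infty}\mathcal{E}(t)$. By a standard tail argument, $\int_t^{t+1}\!\int_U(|\nabla u|^2+tr^2(\mathcal{H}))\,dx\,ds\to 0$ as $t\to\infty$. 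For an arbitrary sequence $t_n\to\infty$, introduce the time-shifted triple $\rho_n(\tau,x)=\rho(t_n+\tau,x)$, $u_n(\tau,x)=u(t_n+\tau,x)$, $Q_n(\tau,x)=Q(t_n+\tau,x)$ on $[0,1]\times U$. Each $(\rho_n,u_n,Q_n)$ is a finite energy weak solution of \eqref{density equ}-\eqref{order parameter} on this cylinder with $u_n|_{\partial U}=0$ and $Q_n|_{\partial U}=Q_0$, and one has the uniform bounds $\|\rho_n\|_{L^\infty(0,1;L^\gamma)}+\|Q_n\|_{L^\infty(0,1;H^1)\cap L^2(0,1;H^2)}\le C$, together with $u_n\to 0$ strongly in $L^2(0,1;H_0^1(U))$ and $\mathcal{H}(Q_n)\to 0$ strongly in $L^2(0,1;L^2(U))$.

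Next I would pass to the limit in the $Q$-equation and the continuity equation. Writing $\partial_t Q_n=\Gamma\mathcal{H}(Q_n)-u_n\cdot\nabla Q_n+\Omega_nQ_n-Q_n\Omega_n$ and using the uniform bounds gives an $L^2(0,1;H^{-1})$ control on $\partial_tQ_n$, so Aubin-Lions yields $Q_n\to Q_s$ strongly in $L^2(0,1;H^1(U))$ and weakly in $L^2(0,1;H^2(U))$. Since $\mathcal{H}(Q_n)\to 0$ in $L^2(L^2)$, the limit satisfies $\mathcal{H}(Q_s)=0$; since $\partial_tQ_n\to 0$ in $\mathcal{D}'$, $Q_s$ is time-independent; and the Dirichlet trace $Q_s|_{\partial U}=Q_0$ is preserved. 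For the density, mass conservation $\int_U\rho_n(\tau)\,dx\equiv m_0$ together with the continuity equation and $u_n\to 0$ give $\partial_t\rho_n\to 0$ in $\mathcal{D}'((0,1)\times U)$, so any weak-$*$ limit $\rho_\infty$ of $\rho_n$ in $L^\infty(0,1;L^\gamma(U))$ depends only on $x$.

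The main obstacle is pinning down $\rho_\infty$ and identifying the limit energy. Passing to the limit in the momentum equation, the convective, viscous and time-derivative terms vanish because $u_n\to 0$ strongly; the $Q$-induced stress vanishes via the pointwise identity
\[\nabla\cdot\bigl(L\nabla Q\odot\nabla Q-\mathcal{F}(Q)I_3\bigr)=\mathcal{H}(Q):\nabla Q,\]
together with $L\nabla\cdot(Q\mathcal{H}(Q)-\mathcal{H}(Q)Q)=0$ at $Q=Q_s$ (both because $\mathcal{H}(Q_s)=0$). What remains is $\nabla\overline{\rho^\gamma}=0$ in $\mathcal{D}'$, so $\overline{\rho^\gamma}$ is spatially constant. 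To upgrade this to strong convergence $\rho_n\to\rho_\infty$ and $\overline{\rho^\gamma}=\rho_\infty^\gamma$, I would rerun the effective viscous flux and oscillation defect measure arguments from Section~5 on the cylinder $(0,1)\times U$; the strong convergence $u_n\to 0$ eliminates most of the delicate cancellations. Strict monotonicity of $z\mapsto z^\gamma$ then forces $\rho_\infty$ to be spatially constant, and mass conservation fixes $\rho_\infty=m_0/|U|=\rho_s$. Since the limit is independent of the choice of $t_n$ and $\{\rho(t)\}_{t\ge 0}$ is bounded in $L^\gamma(U)$, uniqueness of weak-$*$ cluster points upgrades the convergence to $\rho(t)\rightharpoonup\rho_s$ along the full time variable. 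Finally, the strong convergence of $\rho_n$ in $L^1$ combined with the strong convergences of $u_n$ and $Q_n$ gives $\mathcal{E}(t_n+\tau)\to\mathcal{E}_s$ for a.e. $\tau\in[0,1]$; the monotonicity of $\mathcal{E}$ then lifts this to $\lim_{t\to\infty}\mathcal{E}(t)=\mathcal{E}_s$, completing the argument.
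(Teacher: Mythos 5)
Your proposal is correct and follows essentially the same route as the paper's proof: global dissipation bound, time-shifted sequences on $(0,1)\times U$, Aubin--Lions for $Q_n$, passage to the limit in the momentum equation to get $\nabla\overline{\rho^\gamma}=0$ (using $\mathcal{H}(Q_s)=0$ and the tracelessness of $Q_s$), strong convergence of the density, and mass conservation to pin down $\rho_s=m_0/|U|$. The only cosmetic difference is that for the strong convergence of $\rho_n$ you propose rerunning the effective viscous flux and oscillation defect measure machinery of Section 5, whereas the paper invokes the $L^p$ div-curl lemma argument of Feireisl--Petzeltov\'a (which also supplies the higher integrability $\rho_n\in L^{\gamma+\theta}$ needed to even define $\overline{\rho^\gamma}$ -- a point worth making explicit in your write-up).
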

\br The existence of a classical solution $Q_s$ in \eqref{equ for
steady sol} is guaranteed from elliptic PDE theory. The infimum
energy of $\mathcal{G}(Q)$ can be achieved, due to the weak lower
semi-continuity and coercivity of $\mathcal{G}(Q)$. \er

\begin{proof}
To begin with, we obtain from Theorem \ref{theorem on main result}
that \be
\mbox{ess}\displaystyle\sup_{t>0}\mathcal{E}(t)+\int_0^{\infty}\int_{U}\left(\nu|\nabla{u}|^2
+(\lambda+\nu)|\mbox{div}u|^2+\Gamma\mbox{tr}^2(\mathcal{H})\right)\,dxdt
\leq \mathcal{E}(0). \ee Consequently, we know from Corollary
\ref{remarkofQ} that
\be
\begin{split}
&\mbox{ess}\displaystyle\sup_{t>0}\big(\|\rho\|_{L^\gamma(U)}+\|\sqrt{\rho}u\|_{L^2(U)}+\|Q\|_{H^1(U)}
\big)\\&+\int_0^\infty\int_{U}\|\nabla{u}\|_{L^2(U)}^2+\mbox{tr}^2(\mathcal{H})\,dxdt
\leq C(\mathcal{E}_0, a,b,c, U).
\end{split} \label{uniform bound for BE} \ee For
the sake of convenience, we introduce the following sequences
\begin{eqnarray*} &&\rho_n(x, t)
\doteq \rho(x, t+n), \ \ u_n(x, t) \doteq u(x, t+n), \ \ Q_n(x, t)
\doteq Q(x, t+n), \\
&&\mathcal{H}_n(x, t)=L\Delta Q_n-a Q_n-cQ_n \mbox{tr}(Q_n^2),
\end{eqnarray*}
 for all integer $n$ and $t \in (0, 1)$, $x\in U$. Then it follows immediately from \eqref{uniform bound for BE} that for
any $n$, we have \bea &&\rho_n \in L^\infty(0, 1; L^\gamma(U)), \ \
\sqrt{\rho_n}u_n \in L^\infty(0, 1; L^2(U)), \ \ Q_n \in L^\infty(0,
1; H^1(U)), \label{uniform bound in LB}\\
&&\displaystyle\lim_{n\rightarrow
\infty}\int_0^1\left(\|\nabla{u}_n\|_{L^2(U)}^2+\|\mbox{tr}^2(\mathcal{H}_n)\|_{L^{1}(U)}\right)\,
dt=0. \label{zero limit sequence}  \eea Therefore, choosing a
subsequence if necessary, we know as $n \rightarrow \infty$ that
\bea &&\rho_n(x, t) \rightarrow \rho_s \ \ \ \mbox{weakly in }
L^\gamma\big((0, 1)\times
U\big), \\
&&u_n(x, t) \rightarrow 0 \ \ \ \ \mbox{weakly in } L^2\big(0,
1; H_0^1(U) \big), \\
&&Q_n(x, t) \rightarrow Q_s \ \ \mbox{weakly in } L^2\big(0, 1;
H^2(U) \big), \label{H2 weak convergence of Q} \\
&&H_n(x, t) \rightarrow 0 \ \ \ \ \mbox{weakly in } L^2\big(0, 1;
L^2(U) \big).\label{zero limit of H} \eea
%
%
On the other hand, it is easy to deduce from \eqref{uniform bound
for BE} and \eqref{zero limit sequence} that \be
\displaystyle\lim_{n\rightarrow \infty}\int_0^1
\left(\|\rho_n|u_n|^2\|_{L^\frac{3\gamma}{\gamma+3}(U)}+\|\rho_nu_n\|^2_{L^{\frac{6\gamma}{\gamma+6}}(U)}\right)\,dt=0.
\label{extra zero limit sequence} \ee

Since $\rho, u$ are solutions to \eqref{density equ} in the sense of
renormalized solutions, we take the test function sequence $\eta(x,
t)=\psi(t)\phi(x)$ in \eqref{density equ}, with $\phi(x) \in
\mathcal{D}(U)$, $\psi(t)\in \mathcal{D}(0, 1)$, to have
\[
\int_0^1 \Big( \int_{U}\rho_n(x, t)\phi(x)dx
\Big)\psi'(t)\,dt+\int_0^1\int_{U}\rho_n(x)u_n(x)\nabla\phi(x)
\psi(t)\,dxdt=0.
\]
Taking $n\rightarrow \infty$ and using \eqref{extra zero limit
sequence}, we get
\[ \int_0^1 \Big( \int_{U}\rho_s\phi(x)dx
\Big)\psi'(t)\,dt=0,  \] which indicates $\rho_s$ is a function
independent of $t$, and henceforth $m(\rho) \doteq \int_{U}\rho(x,
t) dx$ is a constant. On the other hand, by \eqref{zero limit
sequence}, \eqref{H2 weak convergence of Q} and \eqref{zero limit of
H}, we have \be \mathcal{H}(Q_s)=0. \label{zero steady Q} \ee Hence
if we apply the test function $\eta(x, t)$ again to equation
\eqref{order parameter}, we know that $Q_s$ is also a function
independent of $t$. Moreover, we infer from equation \eqref{order
parameter} and \eqref{uniform bound for BE} that
\[ \partial_tQ_n \in L^2\big((0, 1); L^{\frac{3}{2}}(U)\big) ,  \]
combined with \eqref{H2 weak convergence of Q},  we deduce by
Aubin-Lions compactness theorem that
 \be Q_n \rightarrow Q_s \ \ \mbox{strongly in } L^2(0, 1,
H^1(U)), \label{strong H1 convergence} \ee with $Q_s$ satisfying \be
\label{equationofH(Q)} \mathcal{H}(Q_s)=0, \ \ \ Q_s \in S_0^3, \ \
\mbox{a.e. in} \ U, \ \ \ Q_s|_{\partial{U}}=Q_0. \ee

Next, similar to arguments in previous sections, we can establish
the following higher integrability result for $\rho$ in $2D$:
\bl \label{lemma on high integ in ld} For $\gamma > 1$, there exists
$\theta >0$, such that for all $n$, it holds
\[ \int_0^1\int_{U}\rho_n^{\gamma+\theta}(x, t)\,dxdt \leq C. \] \el
By Lemma \ref{lemma on high integ in ld}, we may assume \be
\rho_n^{\gamma} \rightarrow \overline{\rho^\gamma} \ \ \mbox{weakly
in} \ L^{\frac{\gamma+\theta}{\gamma}}\big((0, 1)\times U \big).
\label{weak convergence for rho s} \ee Thus, passing to the limit in
equation \eqref{navier-stokes}, using \eqref{uniform bound in LB},
\eqref{zero limit sequence}, and \eqref{extra zero limit sequence},
we obtain \bea
\nabla{\overline{\rho^\gamma}}&=&-\nabla\cdot\big(L\nabla{Q}_s\odot\nabla{Q}_s-\mathcal{F}(Q_s)I_3
\big)   \non\\
&=&-\nabla{Q}_s : \big[L\Delta{Q}_s-aQ_s+bQ_s^2-cQ_s{tr}(Q_s^2)\big]
\non\\
&=&-\nabla{Q}_s : \big[\mathcal{H}(Q_s)+\frac{b}{3}tr(Q_s^2)I_3
\big] \non\\
&=&-\nabla{Q}_s :
\mathcal{H}(Q_s)-\frac{b}{3}tr(Q_s^2)\nabla{tr(Q_s)}
 \non\\
&=&0 \ \ \ \ \mbox{in } \ \mathcal{D}'\big((0,1)\times{U}\big).
\label{limit of den distribution sense} \eea Next, following the
same argument as in \cite{FP99}, that is, using the $L^p$-version of
the celebrated div-curl lemma argument as in \cite{FP99}, we can
actually show that the convergence in \eqref{weak convergence for
rho s} is strong, and henceforth \be \rho_n \rightarrow \rho_s \ \
\mbox{strongly in} \ L^\gamma\big((0, 1)\times U \big).
\label{strong L gamma convergence}\ee Note that we already claim
that $\rho_s$ is a function independent of $t$, thus \eqref{limit of
den distribution sense}-\eqref{strong L gamma convergence} indicate
that \be \rho_s=\frac{m_0}{|U|}, \ee where we used a fact that
$m(\rho)=\int_{U}\rho\,dx$ is a constant, and
$m_0=\int_{U}\rho_0\,dx.$

On the other hand, by the basic energy law \eqref{basic energy law}
and Lemma \ref{lemma on lower bound of total energy}, we may assume
\be \mathcal{E}_\infty \doteq
\displaystyle\lim_{t\rightarrow\infty}\mathcal{E}(t)=
\displaystyle\lim_{t\rightarrow\infty}\left(\int_{U}\Big[\frac12\rho|u|^2(t)+\frac{\rho^{\gamma}(t)}{\gamma-1}\Big]\,dx
+\mathcal{G}(Q(t))\right). \ee And we define the energy for the
limit functions $(\rho_s, 0, Q_s)$ by \be \mathcal{E}_s \doteq
\int_{U}\frac{\rho_s^\gamma}{\gamma-1}\,dx+\mathcal{G}(Q_s).
\label{def of steady energy} \ee
 Using \eqref{extra zero limit sequence}, \eqref{strong H1 convergence} and \eqref{strong L gamma convergence}, we
 get
 \be
  \mathcal{E}_\infty =\displaystyle\lim_{n \rightarrow
  \infty}\int_0^1\mathcal{E}(\tau+n)\,d\tau = \displaystyle\lim_{n \rightarrow
  \infty}\int_0^1\left\{\int_{U}
  \Big[\frac12\rho_n|u_n|^2+\frac{\rho_n^\gamma}{\gamma-1}\Big]\,dx+\mathcal{G}(Q_n)\right\}\,d\tau
  = \mathcal{E}_s.
\label{limit and steady energy 1} \ee Finally, it is easy to derive
from equation \eqref{density equ} that
\[ \rho(t) \rightarrow \rho_s \ \ \mbox{weakly in } L^\gamma(U), \ \mbox{as } t \rightarrow \infty.  \]
\end{proof}

\bigskip
\noindent \textbf{Acknowledgments:} The authors would like to thank
Professors Arghir Zarnescu and Colin Denniston for their valuable discussions. 
D. Wang's research was supported in part by the National Science
Foundation under Grant DMS-0906160 and by the Office of Naval
Research under Grant N00014-07-1-0668. 
C. Yu's research was supported in part by the National Science
Foundation under Grant DMS-0906160.
Xu was partially supported by NSF grant DMS-0806703.


\end{document}